\documentclass[english,11pt]{amsart}
\usepackage{amsmath}
\usepackage{latexsym}
\usepackage{amssymb}

\usepackage[colorlinks=true,linkcolor=black,citecolor=black]{hyperref}

\oddsidemargin=0.1in  
\evensidemargin=0.1in 

 \textwidth=6.5in   
 \textheight=8.3in  
 \parindent=22pt
 \topskip=12pt
 \headsep=14pt

\newcommand{\om}{\omega}
\newcommand{\lla}{\,\langle\!\langle\,}
\newcommand{\rra}{\,\rangle\!\rangle\,}
\newcommand{\la}{\,\langle\,}
\newcommand{\ra}{\,\rangle\,}

\newcommand{\R}{\mathbb{R}}

\newcommand{\C}{\mathbb{C}}
\newcommand{\Q}{\mathbb{Q}}
\newcommand{\A}{\mathcal{A}}
\newcommand{\Z}{\mathbb{Z}}

\newcommand{\wT}{\widetilde{T}}
\renewcommand{\wr}{\widehat{\rho}}
\newcommand{\wV}{\widehat{V}}
\newcommand{\wW}{\widehat{W}}
\newcommand{\wP}{\widehat{P}}
\newcommand{\wQ}{\widehat{Q}}
\newcommand{\wE}{\widehat{E}}

\renewcommand{\H}{\mathcal{H}}
\renewcommand{\le}{\leqslant}
\renewcommand{\ge}{\geqslant}

\newcommand{\res}{\mathrm{Res }}

\newcommand{\reg}{\mathrm{reg}}

\newcommand{\F}{\mathcal{F}}

\newcommand{\gl}{\mathrm{GL}}
\newcommand{\ind}{\mathrm{Ind}}
\newcommand{\re}{\mathrm{Re}}\newcommand{\im}{\mathrm{Im}}

\newcommand{\tr}{\mathrm{Tr}}
\newcommand{\SL}{\mathrm{SL}}

\newcommand{\cI}{\mathcal{I}}

\newcommand{\D}{\mathcal{D}}

\renewcommand{\SS}{\mathcal{S}}

\newcommand{\EE}{\mathcal{E}}

\newcommand{\nt}{\,\widetilde{n}}
\newcommand{\tf}{\widetilde{f}}
\newcommand{\tF}{\widetilde{F}}
\newcommand{\tV}{\widetilde{V}}
\newcommand{\ov}[1]{\overline{#1}}
\newcommand{\og}{{\overline{\Gamma}}}

\newcommand{\cc}{\mathcal{C}}
\newcommand{\comment}[1]{}

\theoremstyle{plain}
\newtheorem{theorem}{Theorem}[section]
\newtheorem{corollary}[theorem]{Corollary}
\newtheorem{prop}[theorem]{Proposition}
\newtheorem{lemma}[theorem]{Lemma}

\numberwithin{equation}{section}
\theoremstyle{definition}
\newtheorem{remark}[theorem]{Remark}
\newtheorem{definition}[theorem]{Definition}

\title{Modular forms and period polynomials}
\author{Vicen\c{t}iu Pa\c{s}ol, Alexandru A. Popa}

\address{Institute of Mathematics ``Simion Stoilow" of the Romanian Academy,
P.O. Box 1-764, RO-014700 Bucharest, Romania}
\address{E-mail: vicentiu.pasol@imar.ro}
\address{E-mail: alexandru.popa@imar.ro}

\subjclass{11F11, 11F67}
\begin{document}

\begin{abstract}We study the space of period polynomials associated with modular forms of
integral weight for finite index subgroups of the modular group. For the modular group, this space is endowed
with a pairing, corresponding to the Petersson inner product on modular forms via a formula of
Haberland, and with an action of Hecke operators, defined algebraically by Zagier. We generalize
Haberland's formula to (not necessarily cuspidal) modular forms for finite index subgroups, and we show that it
conceals two stronger formulas. We extend the action of Hecke operators to period polynomials of modular forms, we
show that the pairing on period polynomials appearing in Haberland's formula is nondegenerate, and we determine
the adjoints of Hecke operators with respect to it. We give a few applications for $\Gamma_1(N)$: an extension of
the Eichler-Shimura isomorphism to the entire space of modular forms; the determination of the 
relations satisfied by the even and odd parts of period polynomials associated with cusp forms,
which are independent of the period relations; and an explicit formula for Fourier coefficients of Hecke eigenforms
in terms of their period polynomials, generalizing the Coefficients Theorem of Manin. 
\end{abstract}
\maketitle

\section{Introduction}

Let $\Gamma$ be a finite index subgroup of $\Gamma_1=\SL_2(\Z)$, and
let $S_k(\Gamma)$ be the space of cusp forms of integer weight $k\ge 2$ for $\Gamma$. Let $V_w$ be
the $\Gamma$-module of complex polynomials of degree at most $w=k-2$.
Each form $f\in S_k(\Gamma)$ determines a collection of polynomials $\rho_f: \Gamma\backslash
\Gamma_1\rightarrow V_w$ given by
\[
\rho_f(A)=\int_0^{i\infty} f|_k A(t) (t-X)^w dt.
\]
The object $\rho_f$ belongs to the induced $\Gamma_1$-module $\ind_\Gamma^{\Gamma_1}
V_w$, and
we call it the (multiple) period polynomial associated to $f$.  The goal of this paper is to
investigate the structure of the space of period polynomials, reflecting the Petersson inner
product and the Hecke operators on modular forms. Working inside the subspace  of
period polynomials $W_w^\Gamma\subset \ind_\Gamma^{\Gamma_1} V_w$, we
show that the Petersson product, and the action of Hecke operators, can be
stated in a
simple way in terms of period polynomials. On an abstract level, this is explained by the fact
that the parabolic cohomology  class associated to $f$ 
is completely determined by $\rho_f$, as reviewed in Section \ref{sec2} where we restate the
Eichler-Shimura isomorphism in terms of period polynomials. Our results  can be
interpreted as translating the cup product and the action of Hecke operators on cohomology,
into a pairing and a Hecke action on the space of period polynomials. 

An essential ingredient in our approach is a generalization of a formula of Haberland
expressing the Petersson product of two cusp forms for the modular group in terms of a pairing on their period
polynomials \cite{H,KZ}. In Section \ref{sechab} we show that Haberland's formula can be extended to finite
index subgroups of $\Gamma_1$. More importantly, using an involution on period polynomials that corresponds to
complex conjugation, we show that Haberland's formula splits in two simpler formulas, pairing
the opposite sign parts (respectively the same sign parts) of the period polynomials of
the two forms when $k$ is even (respectively when $k$ is odd). For the full
modular group, the stronger formulas were proved by different means in \cite{Po}. 
When $k=2$, a generalization of Haberland's formula was given by Merel \cite{M09}, and a proof for finite
index subgroups and arbitrary weight was very recently given by Cohen \cite{Co}.
Our proof is simplified by the use of Stokes' theorem on a fundamental domain for $\Gamma(2)$,
which clarifies the appearence of the period polynomial pairing in the formula.

The action of Hecke operators on period polynomials was defined algebraically by Zagier for
the full modular group \cite{Z90,Za93,CZ}. It was extended by Diamantis to operators of index coprime with the
level for the congruence subgroups $\Gamma_0(N)$ \cite{Di01}. We show in Section
\ref{sec4} that the same elements as in the full level case, which go back to work of Manin
\cite{M73}, have actions on period polynomials that correspond to actions of a large class of double coset
operators on modular forms, including Hecke and Atkin-Lehner operators for $\Gamma_1(N)$. We also determine
the adjoint of the Hecke action with respect to the pairing on period polynomials appearing in Haberland's formula.
The proof of Hecke equivariance given here relies on the generalization of Haberland's formula, and a completely
algebraic proof is given in \cite{PP12a}. 

As an application of the action of Hecke operators on period polynomials, we obtain a generalization of the
Coefficients Theorem of Manin, giving the Fourier coefficients of a Hecke eigenform for $\Gamma_1(N)$ in terms of
its even period polynomial. We also give a simple proof of the rationality of period polynomials of Hecke
eigenforms for $\Gamma_1(N)$ in \S\ref{sec5.1}. We discuss period polynomials of cusp forms with
nontrivial Nebentypus in \S\ref{sec5.2}. Our results can be used to efficiently compute period polynomials of Hecke
eigenforms numerically, as well as Hecke eigenvalues and Petersson norms, and we give an example in
\S\ref{sec5.3}. 

We give two applications of the stronger form of Haberland's formula for cusp forms: in Section
\ref{sec6} we prove a decomposition of cusp forms in terms of Poincar\'e series generators;
while in Section \ref{sec7} we obtain the extra relations satisfied
by the even and odd period polynomials of cusp forms, obtained by Kohnen and Zagier in the
full level case \cite{KZ}. For $\Gamma=\Gamma_0(N)$, we characterize those $N$ for which the
extra relations involve only the even parts of period polynomials just like in the full
level case, that is those $N$ for which the map $\rho^-:S_k(\Gamma)\rightarrow (W_w^\Gamma)^-$ is an isomorphism
(Prop. \ref{prop7.2}). The extra relations are explicit once the period polynomials
of the generators with rational periods are computed, as partially done in \cite{An}, \cite{FY}.
For small $N$ that is enough to give completely explicit relations, and we illustrate this
for $\Gamma_0(2)$.

In the last section, we define the space $\wW_w^\Gamma$ of period polynomials of all modular forms, following the
construction for the modular group in \cite{Z91}. We generalize Haberland's formula and its
refinement to this larger space, and we show that the pairing appearing in Haberland's formula is nondegenerate on
$\wW_w^\Gamma$. In contrast, on $W_w^\Gamma$ the radical of this pairing consists of the ``coboundary
polynomials'', of dimension equal to the dimension of the Eisenstein subspace of $M_k(\Gamma)$, as shown in Section
\ref{sec_cw}. 

For $\Gamma=\Gamma_1(N)$ and $k>2$, we show
that the plus and minus period polynomial maps extend to isomorphisms  $\rho^{\pm}:M_k(\Gamma)
\rightarrow(\wW_w^\Gamma)^\pm$. This can be seen as an extension of the
Eichler-Shimura isomorphism to the entire space of modular forms. Surprisingly, when
$k=2$ the two maps are not always isomorphisms: for $\Gamma=\Gamma_0(N)$ with $N$ square free
with at least two prime factors, precisely one of the two maps is an isomorphism (Proposition
\ref{p7.4} and Remark \ref{r7.5}). In this context we point out that Haberland's formula has been generalized
to weakly holomorphic modular forms of full level in \cite{BGKO}, and it would be interesting to investigate if
the results proved here for modular forms hold in that setting as well. 

In \S\ref{sec7.1} we extend the action of Hecke operators to the space of period polynomials of all modular
forms, and we show that the adjoints of Hecke operators on the larger space $\wW_w^\Gamma$ are the same as on
$W_w^\Gamma$. As an application, for $\Gamma=\Gamma_1(N)$ we show that for $(n,N)=1$ 
\[\tr (W_w^\Gamma |_\Delta \wT_n ) = \tr ( M_k(\Gamma)| T_n)+ \tr ( S_k(\Gamma)| T_n)
\]
where the $|_\Delta \wT_n$ is the action on period polynomials corresponding to the action of the Hecke operator
$T_n$ on modular forms. A similar statement holds for modular forms with Nebentypus, and for traces
of Atkin-Lehner operators on $\Gamma_0(N)$. This fact is used in upcoming joint work of
the second author with Don
Zagier to give a simple proof of the Eichler-Selberg trace formula for modular forms on $\Gamma_0(N)$ with
Nebentypus.

A second method of obtaining explicit extra relations among periods of cusp forms 
is sketched in Section \ref{sec7.2}. It generalizes the $\Gamma_1$ approach in \cite{KZ}, by
using period polynomials of Eisenstein series and Haberland's formula for arbitrary modular
forms.

We note that period polynomials are dual to modular symbols, in the sense that the coefficients
of period polynomials are values of the integration pairing between modular forms and Manin
symbols (the duality between cohomology and homology). The results of this paper are
therefore parallel to the modular symbol formalism developed by Merel \cite{Me}, and they also lead
to efficient algorithms for modular form computations, as shown in \S\ref{sec5.3}. An additional structure that
we introduce here is the extended pairing on $\wW_w^\Gamma$, whose nondegeneracy and Hecke equivariance
properties are important even if one is interested only in period polynomials of cusp forms. For example, the
properties of the extended pairing were used in the computation of the Hecke and Atkin-Lehner traces on
$W_w^\Gamma$ mentioned above, and in the proof of rationality of $\rho_f^\pm$ for newforms $f\in S_k(\Gamma_1(N))$
(Prop. \ref{p5.7}). 

The paper is self-contained, except for using the fact that the dimension of the parabolic cohomology group
$H^1_P(\Gamma, V_w)$ equals twice the dimension of $S_k(\Gamma)$, a consequence of the Eichler-Shimura
isomorphism. 

\comment{
The paper is organized as follows. In Section \ref{sec2} we restate the
Eichler-Shimura isomorphism in terms of period polynomials, and introduce most of the
notations used throughout. In Sections \ref{sechab}-\ref{sec7} we consider
only period polynomials of cusp forms, while in Section \ref{sec8} we generalize most of the
results of the previous sections to period polynomials of arbitrary modular forms. }

\section{Period polynomials and the Eichler-Shimura isomorphism}\label{sec2}

The theory of period polynomials for $\Gamma_0(N)$ has been treated in
\cite{sk90,An,Di01}. We review it here in a general setting, and interpret the Eichler-Shimura
isomorphism in terms of period polynomials. We use the properties of the pairing on period
polynomials introduced in Section \ref{sechab} to prove injectivity of the Eichler-Shimura
map. In this section we fix notations in use throughout the paper.

Let $\Gamma$ be a finite index subgroup of $\Gamma_1=\SL_2(\Z)$, and denote by
$\og=\Gamma/(\Gamma\cap \{\pm 1\})$ the projectivisation of $\Gamma$.  Throughout
the paper, the weight $k\ge 2$ is an integer, and we set $w=k-2$. Let $V_w$ be the module
of complex polynomials of degree at most $w$, with (right) $\Gamma_1$-action by the $|_{-w}$
operator: $P|_{-w} g (z)= P(gz) j(g,z)^w$ where $j(g,z)=cz+d$ for $g=\left(\begin{smallmatrix}
* & * \\ c & d \end{smallmatrix}\right)\in \gl_2(\R)$. Since this is the only
action on polynomials, we will omit the subscript. 

Viewing $V_w$ as a $\Gamma$-module, let $\tV_w^\Gamma$ be the induced
$\Gamma_1$-module $\ind_{\Gamma}^{\Gamma_1}(V_w)$.
Since $V_w$ is also a $\Gamma_1$-module, 
we can identify $\tV_w^\Gamma$ with the space of maps  $ P :\Gamma\backslash \Gamma_1
\rightarrow V_w$ with $\Gamma_1$ action:
\[ 
P|g (A) = P(Ag^{-1})|_{-w}g.    
\] 
By the Shapiro isomorphism, we have $H_P^1(\Gamma, V_w)\simeq H_P^1(\Gamma_1,
\tV_w^\Gamma)$ (parabolic cohomology groups). For background on Shapiro's lemma and induced
modules, see \cite[p.59]{NSW}. 

Letting $J=\left(\begin{smallmatrix} -1 & 0 \\ 0 & -1
\end{smallmatrix}\right)$, for any cocycle $\sigma: \Gamma_1\rightarrow
V_w^\Gamma$ we have $\sigma(g)|(1-J)=\sigma(J)|(1-g)$  for all $g\in \Gamma_1$(which follows from
$\sigma(Jg)=\sigma(gJ)$). It follows that the cocycle
$\tilde{\sigma}=\frac{\sigma+\sigma|J}{2}$ is in the same cohomology class as $\sigma$,
where $\sigma|J(g):=\sigma(g)|J$. Since the cocycle $\tilde{\sigma}$ takes values in the
subspace
$$
V_w^\Gamma:=\{P\in \tV_w^\Gamma: P|J=P, \text{ that is } P(A)=(-1)^w P(-A) \}
$$
we have $H_P^1(\Gamma_1, \tV_w^\Gamma)\simeq H_P^1(\Gamma_1, V_w^\Gamma) $ and from now on we
will only work inside the space $V_w^\Gamma$. Note that when $k$ is even, $V_w$ is both a
$\og$ and a $\og_1$ module, and the space $V_w^\Gamma$ can be identified with
$\ind_{\og}^{\og_1}(V_w)$.

Let now $f\in S_k(\Gamma)$, and define a cocycle $\sigma_f:\Gamma_1\rightarrow V_w^\Gamma$
by:
\[
\sigma_f(g) (A) =\int_{g^{-1}i\infty}^{i\infty} f|A(t) (t-X)^w dt,
\]
where the stroke operator acting on modular forms of weight $k$ is $f|g=f|_k g$ for
$g\in GL_2(\R)^+$.  The action of
the coset $A$ is defined
by acting with any coset representative; this is independent of the representative chosen since
$f|_k\gamma=f$ for $\gamma\in \Gamma$. We will show at the end of this section that
$\sigma_f$ satisfies the cocycle relation
\[
\sigma_f(g_1 g_2)=\sigma_f(g_2)+\sigma_f(g_1)|g_2.
\]

Let $S=\left(\begin{smallmatrix} 0 & -1 \\ 1 & 0 \end{smallmatrix}\right), \
T=\left(\begin{smallmatrix} 1 & 1 \\ 0 & 1 \end{smallmatrix}\right)
$, and let $U=TS$, so that $U^3=J$. Clearly $\sigma_f(\pm T^n)$ vanishes for $n\in \Z$,
and it is easy to see (by a change of variables) that it is a coboundary for other
parabolic elements of $\Gamma_1$, hence $\sigma_f$ defines an element $[\sigma_f]
\in H_P^1(\Gamma_1,
V_w^\Gamma)$. Since $T$, $S$ and $J$ generate $\Gamma_1$, it follows that the cohomology class $[\sigma_f]$ is
completely determined by the value $\rho_f=\sigma_f(S)\in V_w^\Gamma$, which is the
multiple period polynomial attached to $f$ in the introduction. Using the fact that
$\sigma_f(S^2)=\sigma_f(U^3)=\sigma_f(US)=0$ and the cocycle
relation, it follows that $\rho_f$ satisfies the period polynomial relations:
\[
\rho_f|(1+S)=0, \ \ \ \rho_f|(1+U+U^2)=0.
\]
We also have $\rho_f(-A)=(-1)^w\rho_f(A)$, so $\rho_f|J= \rho_f$. Therefore the image of
the map $f\rightarrow \rho_f$ is contained in the subspace\footnote{The condition $P|J=P$ is
part of the definition of $V_w^\Gamma$, but we include it for clarity.}
$$
W_w^\Gamma=\{P\in V_w^\Gamma\ :\ P|(1+S)=0, \ \ P|(1+U+U^2)=0, \ \ P|J= P \} 
$$
whose elements we call \emph{period polynomials} (each element is in fact a collection of
$[\Gamma_1:\Gamma]$ polynomials belonging to $V_w$). 

In fact, setting $C_w^\Gamma=\{P|(1-S) : P \in
V_w^\Gamma,\ \ P|T=P\}\subset W_w^\Gamma,$ we have an isomorphism 
\begin{equation}\label{2.2}
H_P^1 (\Gamma_1, V_w^\Gamma)\simeq W_w^\Gamma/C_w^\Gamma,
\end{equation}
obtained by choosing representative cocycles $\sigma$ such that $\sigma( T)=\sigma(J)=0$, and
sending $[\sigma]$ to $\sigma(S)\in W_w^\Gamma$. The space $C_w^\Gamma$ is the image
of coboundaries, and we show in Lemma \ref{L7.1} that its dimension equals the dimension of
the Eisenstein subspace $\EE_k(\Gamma)$ of $M_k(\Gamma)$. 

Assume now that $\Gamma$ is normalized by 
$\epsilon=\big(\begin{smallmatrix} -1 & 0 \\
	 0 & 1 \end{smallmatrix}\big)$. The matrix $\epsilon$ acts on $P\in V_w^\Gamma$ by
\begin{equation}\label{eps}
P|\epsilon (A) = P(A')|_{-w} \epsilon,
\end{equation}
where $A'=\epsilon A \epsilon$. This action is compatible with the action of $\Gamma_1$: $P| g
|
\epsilon = P|\epsilon|\epsilon g \epsilon$ for all $g\in
\Gamma_1$. If $f^*\in S_k(\Gamma)$ denotes the form $f^*(z)=\overline{f(-\overline{z})}$, then
\begin{equation}\label{2.1}
 \overline{\rho_{f^*}}=(-1)^{w+1}\rho_f|\epsilon
\end{equation}
where  $\overline{P}(A)$ is obtained by taking the complex conjugates of the coefficients
of $P(A)$.

Under the action of $\epsilon$, the space $V_w^\Gamma$ breaks into $\pm
1$-eigenspaces, denoted by  $(V_w^\Gamma)^\pm$. For $P\in V_w^\Gamma$ we denote its
+1 and -1-components by $P^+$ and $P^-$ respectively:  
\begin{equation}\label{e_ev}
P^{\pm}=\frac{1}{2}( P \pm P|\epsilon ) \in (V_w^\Gamma)^\pm.
\end{equation}
We call $P^+$ the \emph{even part} and $P^-$ the \emph{odd part} of $P$,
which is justified by the fact that $P(I)^+$ is an even polynomial, and $P(I)^-$ is an odd 
polynomial ($I$ is
the identity coset). For $P\in W_w^\Gamma$, it is easily checked that $P|\epsilon \in
W_w^\Gamma$ as well. Therefore $P^+, P^-\in W_w^\Gamma$, and the space $W_w^\Gamma$ also
decomposes into eigenspaces  $(W_w^\Gamma)^\pm$. 

Making use of the pairing on period polynomials introduced in Section \ref{sechab}, we restate
the Eichler-Shimura isomorphism in terms of period polynomials as follows. 
\begin{theorem}[Eichler-Shimura] \label{thm2.1}
The two maps $\rho^{\pm}:S_k(\Gamma) \rightarrow
(W_w^\Gamma)^{\pm}$, $f\mapsto \rho_f^\pm$, give rise  to
isomorphisms, denoted by the same symbols:
 \begin{equation}\label{7.1}
\rho^\pm:S_k(\Gamma) \longrightarrow (W_w^\Gamma)^{\pm}/(C_w^\Gamma)^{\pm}.
\end{equation}
\end{theorem}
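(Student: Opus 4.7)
The plan is to combine the assumed Eichler--Shimura dimension equality $\dim H^1_P(\Gamma,V_w)=2\dim S_k(\Gamma)$ with the stronger split form of Haberland's formula proved in Section \ref{sechab}. Once injectivity of each $\rho^\pm$ is in hand, a dimension count upgrades it to an isomorphism.

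First I would verify that the maps are well-defined. The period polynomial relations derived earlier in this section already show $\rho_f\in W_w^\Gamma$. A routine application of the compatibility $P|g|\epsilon=P|\epsilon|\epsilon g\epsilon$ to $g\in\{S,T\}$ (using $P|J=P$ to replace $S^{-1}$ by $S$) shows that $\epsilon$ preserves $C_w^\Gamma$, so its decomposition into $\epsilon$-eigenspaces descends to the quotient, giving
\[
W_w^\Gamma/C_w^\Gamma \;=\; (W_w^\Gamma)^+/(C_w^\Gamma)^+ \;\oplus\; (W_w^\Gamma)^-/(C_w^\Gamma)^-.
\]
Combining Shapiro's lemma, the reduction from $\tld V_w^\Gamma$ to $V_w^\Gamma$ via averaging over $J$, and the isomorphism (\ref{2.2}), the Eichler--Shimura hypothesis translates to
\[
\dim_{\C} W_w^\Gamma/C_w^\Gamma \;=\; 2\dim_{\C} S_k(\Gamma).
\]

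The crucial step is injectivity of each $\rho^\pm$, and the only real input is the split refinement of Haberland's formula from Section \ref{sechab}. That result supplies two formulas, each expressing the Petersson inner product $(f,g)$ as a pairing between one single $\epsilon$-eigenpart of $\rho_f$ and one eigenpart of $\rho_g$ (opposite signs when $k$ is even, same signs when $k$ is odd). Because the Haberland pairing on $W_w^\Gamma$ has $C_w^\Gamma$ in its radical, it descends to $W_w^\Gamma/C_w^\Gamma$; hence if $\rho_f^+\in(C_w^\Gamma)^+$, the stronger formula that isolates the $+$-part on the first slot yields $(f,g)=0$ for every $g\in S_k(\Gamma)$, and nondegeneracy of the Petersson product forces $f=0$. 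The companion formula handles $\rho^-$ identically.

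Setting $d^\pm=\dim_\C(W_w^\Gamma)^\pm/(C_w^\Gamma)^\pm$, injectivity gives $d^\pm\ge\dim_\C S_k(\Gamma)$, while the displayed equality forces $d^++d^-=2\dim_\C S_k(\Gamma)$. Both inequalities must therefore be equalities, so each $\rho^\pm$ is a $\C$-linear injection into a target of the same dimension, hence an isomorphism. The real work of this theorem lies not in the above deduction but in its two background inputs from Section \ref{sechab}: the split version of Haberland's formula, and the fact that $C_w^\Gamma$ is exactly the radical of the pairing on $W_w^\Gamma$. Granted those, the only delicate point above is tracking which of the two stronger formulas isolates the $+$ part and which isolates the $-$ part, a bookkeeping item that depends on the parity of $k$ but does not change the argument.
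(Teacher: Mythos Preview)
Your proof is correct and follows essentially the same approach as the paper: injectivity of the quotient maps $\rho^\pm$ comes from the refined Haberland formula (Theorem~\ref{thm_main}) together with the fact that $C_w^\Gamma$ lies in the radical of $\{\cdot,\cdot\}$ (Lemma~\ref{l4.4}), and surjectivity then follows from the Eichler--Shimura dimension count via \eqref{2.2}. The paper presents the injectivity step in two stages (first $\rho^\pm:S_k(\Gamma)\to(W_w^\Gamma)^\pm$ is injective, then the image meets $C_w^\Gamma$ trivially), whereas you merge these into a single step, but the content is identical.
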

\begin{proof}
By the stronger version of Haberland's formula (Theorem \ref{thm_main}), the two
maps $\rho^{\pm}:S_k(\Gamma) \rightarrow (W_w^\Gamma)^{\pm}$ are injective. Moreover, their
images intersect trivially with $C_w^\Gamma$ by Lemma \ref{l4.4}, so the two maps in \eqref{7.1} are
also injective. Using \eqref{2.2} and the
Eichler-Shimura isomorphism \cite[Ch. 8]{Sh} we have $\dim W_w^\Gamma= 2\dim S_k(\Gamma)+\dim
C_w^\Gamma$, and we conclude that $\rho^\pm$ in \eqref{7.1} are isomorphisms.
\end{proof}

We now show that $\sigma_f$ satisfies the cocycle relation, while also giving another
construction of the associated period polynomial. In analogy with the $\Gamma_1$ case, the
``Eichler
integral'' associated with $f\in S_k(\Gamma)$ is a function
$\tf:\Gamma\backslash\Gamma_1\rightarrow
\A$, where $\A$ is the space of holomorphic functions on the upper half plane, given by:
\begin{equation}\label{eq_eich}
\tf(A) (z)=\int_z^{i\infty} f|A(t) (t-z)^w dt, 
\end{equation}
with $\Gamma_1$-action as on period polynomials: 
$\tf|g (A)= \tf(Ag^{-1})|_{-w}g$ for $g\in \Gamma_1$. By a change of variables we
see that $ \tf|(1-g)=\sigma_f(g)$, which implies that $\sigma_f$ satisfies the cocycle
relation. Note that this provides another
construction for the period polynomial $\rho_f$ attached to $f$, which we record for further
use: 
\begin{equation}\label{e_int}
 \rho_f=\tf|(1-S).
\end{equation}\vspace{-5mm}
\begin{remark}
A similar construction will be used in Section \ref{sec8} to define period polynomials of 
arbitrary modular forms, by means of an Eichler integral $\tf$ of $f\in M_k(\Gamma)$,
which has the property that $\tf| (1-T)=0$ and $\tf|(1-S)$ is the (extended) period
polynomial
attached to $f$. As pointed out in \cite{DIT}, the construction of period polynomials of
cusp forms using their higher order integrals goes back to Poincar\'e.  
\end{remark}

\section{Generalization of Haberland's formula} \label{sechab}

In \cite{H}, Haberland proved a formula expressing the Petersson product of two
cusp forms for the full modular group in terms of a pairing on their period polynomials.
In this section we extend Haberland's formula  to a finite index subgroup
$\Gamma$ of $\Gamma_1$, and we prove a stronger version for subgroups normalized by
$\epsilon$. 

For $f,g\in S_k(\Gamma)$, define the Petersson scalar product:
\[(f,g)=\frac{1}{[\og_1:\og]}\int_{\Gamma\backslash\H} f(z)
\overline{g(z)} y^k
\frac{dx dy}{y^2}.
\]

On $V_w\times V_w$ we have a natural pairing 
$\la \textstyle\sum a_n x^n, \sum b_n x^{n}\ra =\sum (-1)^{w-n} \binom{w}{n}^{-1} a_n b_{w-n},$
satisfying $\la P,Q \ra=(-1)^w\la Q,P\ra$. We will mostly use the equivalent formulation 
\begin{equation}\label{3.3}
\la (aX+b)^w,(cX+d)^w\ra =(ad-bc)^w.
\end{equation}
An easy consequence of \eqref{3.3} is that $\la P|g,Q\ra=\la P, Q|g^\vee\ra$ for $g\in
\gl_2(\R)$, where $g^\vee=g^{-1} \det g$; in particular the pairing is $\SL_2(\R)$-invariant.

We define a similar pairing on $V_w^\Gamma \times V_w^\Gamma$: 
\begin{equation}\label{3.0}
\lla P, Q \rra=\frac{1}{[\og_1:\og]}\sum_{A\in \og\backslash \og_1} \la P(A),Q(A)\ra \ \
\text{ for  } P,Q \in
V_w^\Gamma.
 \end{equation}
\begin{remark} \label{r_sign} 
For odd $k$ there is a sign ambiguity in defining $P(A)$, $Q(A)$ for $P,Q \in V_w^\Gamma$ and
$A\in \og\backslash\og_1$, but the pairing is well-defined since $P(-A)=(-1)^w P(A)$,
$Q(-A)=(-1)^w Q(A)$. For the same reason, one can replace the range  by $A\in
\Gamma\backslash \Gamma_1$ and the normalizing factor by $\frac{1}{[\Gamma_1:\Gamma]}$ without
changing the pairing. A similar observation applies below, when $f|A$ always appears paired
with $\ov{g}|A$, for $f,g\in S_k(\Gamma)$.
\end{remark}

This pairing is $\Gamma_1$-invariant: $ \lla P|g, Q|g \rra=\lla P,Q\rra$, for all $P,Q \in
V_w^\Gamma $
and $g\in \Gamma_1$. It is normalized such that if $f,g \in S_k(\Gamma)$ and $\Gamma'\subset
\Gamma$ then $\lla\rho_f, \rho_g \rra_{\Gamma} = \lla\rho_f, \rho_g \rra_{\Gamma'}$.
Define also the modified pairing on $V_w^\Gamma \times V_w^\Gamma$:
\begin{equation}\label{pairing}
 \{P,Q\}=\lla P|T-T^{-1}, Q\rra,
\end{equation}
which satisfies $\{P,Q\}=(-1)^{w+1}\{Q,P\}$. 

Part a) of the following
theorem generalizes Haberland's formula \cite{H,KZ}. Part b) follows easily from the
proof of part a), although to our knowledge it has not appeared previously in the literature
(except for $\Gamma_1$ in \cite{Po}, but the proof there is more complicated). 
\begin{theorem}\label{thm_hab} (a) For $f,g\in S_k(\Gamma)$, we have
\[ 
6 C_{ k} \cdot(f,g)=\{ \rho_f , \overline{\rho_g}\},
\] 
where complex conjugation acts coefficientwise on polynomials and $C_{k}= -(2i)^{k-1}$.  

(b) For $f,g\in S_k(\Gamma)$, we have $\{ \rho_f , \rho_g\}=0$. 
\end{theorem}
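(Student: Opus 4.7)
The plan is to apply Stokes' theorem on a fundamental domain $\F_2$ for $\Gamma(2)$, whose boundary has only two pairs of identified sides and so is easier to handle than a $\Gamma_1$-fundamental domain; the index $[\Gamma_1:\Gamma(2)]=6$ directly accounts for the factor of $6$ in part (a), and the same method handles part (b) through a different choice of primitive form. As a first step, reshape the Petersson integrand using $y^w\,dx\,dy = C_k^{-1}(z-\bar z)^w\,dz\wedge d\bar z$ (since $z-\bar z=2iy$) together with the identity $(z-\bar z)^w = \la (z-X)^w, (\bar z - X)^w\ra$ from \eqref{3.3}. Summing over cosets produces a $\Gamma_1$-invariant integrand, and passing to $\F_2$ picks up the factor $6$:
\[
6\,[\og_1:\og]\,C_k\,(f,g) = \int_{\F_2}\sum_{A\in\og\backslash\og_1} (f|A)(z)\overline{(g|A)(z)}(z-\bar z)^w\,dz\wedge d\bar z.
\]

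For part (a), introduce the Eichler integral $F_{f|A}(z;X) = \int_z^{i\infty} (f|A)(t)(t-X)^w\,dt$, polynomial in $X$ and satisfying $\partial_z F_{f|A} = -(f|A)(z)(z-X)^w$, and form the $V_w$-valued $(0,1)$-form $\omega(A)(z) = \la F_{f|A}(z;X),\,\overline{(g|A)(z)}(\bar z - X)^w\ra\,d\bar z$. Since $\overline{g|A}$ and $(\bar z - X)^w$ are antiholomorphic in $z$, a direct computation gives $d\omega(A) = -(f|A)(z)\overline{(g|A)(z)}(z-\bar z)^w\,dz\wedge d\bar z$, so Stokes yields
\[
6\,[\og_1:\og]\,C_k\,(f,g) = -\sum_A\int_{\partial \F_2}\omega(A).
\]
For part (b), use instead $\omega'(A)(z) = \la F_{f|A}(z;X),\,(g|A)(z)(z-X)^w\ra\,dz$, a $(1,0)$-form whose integrand is holomorphic in $z$ in both slots; then $d\omega'\equiv 0$ and Stokes forces $\sum_A\int_{\partial\F_2}\omega'(A)=0$.

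It remains to identify the boundary integrals with the period polynomial pairings. Taking $\F_2$ with ideal vertices $\{-1,0,1,i\infty\}$, its boundary consists of two vertical half-lines paired by $T^2$ and two semicircles paired by a conjugate of $T^2$ inside $\Gamma(2)$. Using the cocycle identity $\tf|(1-g)=\sigma_f(g)$ to transport $F_{f|A}$ across the $T^2$-identification, the two vertical contributions rearrange into a sum whose action on the first slot of the pairing is precisely the operator $T - T^{-1}$, producing $\lla \rho_f|(T-T^{-1}),\overline{\rho_g}\rra = \{\rho_f,\overline{\rho_g}\}$ up to the factor $[\og_1:\og]$; the two semicircular contributions collapse by the period polynomial relations $\rho_f|(1+S)=0$ and $\rho_f|(1+U+U^2)=0$. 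Running the identical bookkeeping on $\omega'$ identifies its vanishing boundary integral with $\{\rho_f,\rho_g\}$, giving (b). The main obstacle is exactly this last step: one must verify that the rearrangement of the four boundary pieces under the identifications yields the operator $T-T^{-1}$ (and not some other element of the group algebra), and that the semicircular terms are annihilated by the period relations on the nose. The choice of $\Gamma(2)$ is what makes this feasible, since a $\Gamma_1$-fundamental domain would involve three pairs of identifications by $T$, $S$, and $U$ and a significantly more elaborate combinatorial matching.
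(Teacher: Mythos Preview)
Your overall strategy matches the paper's exactly: Stokes on a $\Gamma(2)$ fundamental domain $\D$ with vertices $i\infty,-1,0,1$, the Eichler integral as primitive, and the holomorphic variant for part (b). The factor $6$ and the reduction of (b) to the vanishing of a closed holomorphic $1$-form on $\partial\D$ are both correct.

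Where your sketch goes wrong is the attribution of the boundary pieces. The vertical sides do \emph{not} produce the operator $T-T^{-1}$; after the $T^2$-identification and the sum over $A$ they cancel outright, since $\sigma_f(T^2)=0$. Conversely, the two semicircular arcs do \emph{not} collapse via the period relations $(1+S)$ or $(1+U+U^2)$; those relations are never invoked. What actually happens is that one applies $z\mapsto Sz$ (not the $\Gamma(2)$-side-pairing) to each semicircle. The transformation law
\[
j(S,\bar z)^w F_A(Sz)=F_{AS}(z)-\int_{i\infty}^{0} f|AS(t)(t-\bar z)^w\,dt
\]
produces two kinds of terms: the $F_{AS}$-terms cancel after summing over $A$ exactly as the verticals did, while the defect integrals $\int_0^{i\infty}$ are precisely the period polynomials of $f$. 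One is then left with double integrals $\int_{\pm 1}^{i\infty}\int_0^{i\infty}$; a final substitution $\tau=T^{\pm 1}z$ in the outer integral converts the limits to $(0,i\infty)$ and brings out $\rho_g(AT^{\pm 1})|T^{\mp 1}$, and the difference of the two semicircles is what yields $T-T^{-1}$ in the pairing. So the operator $T-T^{-1}$ arises from the \emph{semicircular} contributions after the $S$-transform, not from the verticals, and no use of the period relations is needed.
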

\begin{proof} 
(a) The proof is based on Stokes' theorem, as in
\cite{KZ}, except that we apply it to a fundamental domain for $\Gamma(2)$, namely the
quadrilateral region $\D$ with vertices $i\infty, -1, 0, 1$ and with sides the geodesics
connecting the points in that order. The region $\D$ consists of six copies of the fundamental
domain for $\Gamma_1$, which explains the constant 6 appearing in the formula.  Therefore we
have:
\begin{equation}\label{3.1}
\begin{aligned}
 6 C_{k}[\og_1:\og]\cdot (f,g) &= 6 \int_{\Gamma\backslash \H} f(z)
\overline{g(z)}(z-\overline{z})^w
dz d\overline{z}\\
  &= \sum_{A\in\og \backslash \og_1} \int_\D f|A(z) \overline{g|A} (z)
(z-\overline{z})^w dz d\overline{z}\\ 
  &=\sum_{A\in\og \backslash \og_1} \int_{\partial\D}  F_A(z) \overline{g|A} (z)
d\overline{z}
\end{aligned} 
\end{equation}
where $F_A(z)=\int_{i\infty}^{z} f|_k A(t)(t-\overline{z})^w dt$, so that
$\frac{\partial F_A}{\partial z}=f|A(z) (z-\overline{z})^w$, and the last line follows from
Stokes' theorem. For $B\in \SL_2(\Z)$ a change of variables shows that
\begin{equation}\label{3.2}
j(B, \overline{z})^w F_A (Bz)=F_{AB}(z) - \int_{i\infty}^{B^{-1} i\infty} f|AB
(t)(t-\overline{z})^w dt.
\end{equation}
We denote by $\int_a^b$ the integral over the geodesic arc from the cusp
$a$ to the cusp $b$. A change of variables $z=T^2 \tau$ and \eqref{3.2}
yields: 
\[
\int_{1}^{i\infty}  F_A(z) \overline{g|A} (z)
d\overline{z} = \int_{-1}^{ i\infty} F_{A T^2}(\tau) \overline{g|AT^2} (\tau)
d\overline{\tau}, 
\]
and it follows that the sum of integrals over the vertical sides of $\D$ vanishes. 

A change of variables $z=S\tau$ and \eqref{3.2} yields:
\[
\int_{-1}^{0}  F_A(z) \overline{g|A} (z)
d\overline{z} = \int_{1}^{i\infty} F_{A S}(\tau) \overline{g|A S} (\tau)
d\overline{\tau} + \int_{1}^{i \infty} \int_0^{i\infty} f|AS (t) (t-\overline{\tau})^w
\overline{g|AS(\tau)} dt d\overline{\tau}.
\]
\[
\int_{0}^{1}  F_A(z) \overline{g|A} (z)
d\overline{z} = \int_{i\infty}^{-1} F_{A S}(\tau) \overline{g|A S} (\tau)
d\overline{\tau} - \int_{-1}^{i \infty} \int_0^{i\infty} f|AS(t) (t-\overline{\tau})^w
\overline{g|AS(\tau)} dt d\overline{\tau}.
\]
When adding the last two equations and summing over $A \in\og \backslash \og_1$,
the single integrals cancel as before and \eqref{3.1} becomes
\[
6 C_{k} [\og_1:\og]\cdot (f,g)=\sum_{A\in\og \backslash \og_1}
\int_{1}^{i \infty}
\int_0^{i\infty}-  \int_{-1}^{i \infty} \int_0^{i\infty}  
f|A (t) (t-\overline{\tau})^w \overline{g|A(\tau)} dt d\overline{\tau}.
\]
To write the double integrals in terms of the period polynomial pairing, we use \eqref{3.3}.
After a change of variables $\tau=Tz$ the first double
integral becomes
\[
\int_{0}^{i \infty} \int_0^{i\infty}f|A (t) \left\langle (t-X)^w, (\overline{Tz}-X)^w
\right\rangle 
 \overline{g|AT(z)}\ d t d\overline{z}=\la \rho_f(A), \overline{\rho_g}
(AT)|T^{-1}\ra .
\]
The second integral yields the same result, with $T$ replaced by $T^{-1}$, and
the conclusion follows from the fact that the pairing $\lla , \rra$ is $\Gamma_1$ invariant.

\noindent (b) Going backwards in the proof of part (a) we have:
\[
\{\rho_f, \rho_g\} = \frac{1}{[\og_1:\og]} \sum_{A\in\og \backslash \og_1} \int_{\partial\D} 
H_A(z) g|A (z)
d z
\]
where $H_A(z)=\int_{i\infty}^{z} f|_k A(t)(t-z)^w dt=-\tf(A)(z)$. Since the integrand is now
holomorphic and vanishes exponentially at the cusps, it follows that each integral above
vanishes. 
\end{proof}

Now let $\Gamma$ be a congruence subgroup normalized by $\epsilon$. The pairing  $\{\cdot,
\cdot \}$ satisfies
\begin{equation}\label{conj1}
\{P|\epsilon, Q|\epsilon\} = (-1)^{w+1}\{P,Q\}, 
\end{equation}
hence $\{P,Q\}=0$ if $k$ is even and $P, Q\in V_w^\Gamma$ have the same parity, or if $k$ is
odd and $P,Q$ have opposite parity. We have the following stronger
version of Haberland's theorem, generalizing the result for the full modular group from
\cite{Po}. 
\begin{theorem} \label{thm_main}
Let $\Gamma$ be a subgroup of finite index in $\Gamma_1$, normalized by $\epsilon$. For $f,g\in
S_k(\Gamma)$:
\[ 
3 C_{k}\cdot (f,g)=\{\rho_f^{\kappa_1} , \overline{\rho_g^{\kappa_2}}\}
\]
for any $\kappa_1, \kappa_2\in\{+, -\}$ with $\kappa_1\ne \kappa_2$ if $k$ even
and $\kappa_1=\kappa_2$ if $k$ odd.
\end{theorem}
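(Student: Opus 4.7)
The idea is to decompose both Haberland's formula from Theorem~\ref{thm_hab}(a) and the vanishing identity Theorem~\ref{thm_hab}(b) into their $\epsilon$-isotypic components, and show that the four possible terms $\{\rho_f^{\kappa_1},\overline{\rho_g^{\kappa_2}}\}$ break into two pairs: one pair vanishes by \eqref{conj1}, and the other two terms are forced to be equal by Theorem~\ref{thm_hab}(b) together with \eqref{2.1}. Their common value is then half of Haberland's $6C_k(f,g)$, proving the theorem.

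First I would record two compatibility facts. Since $\epsilon$ has real entries, the $\epsilon$-action commutes with complex conjugation on $V_w^\Gamma$, so $\overline{\rho_g^{\pm}}=(\overline{\rho_g})^{\pm}$; thus the right-hand side of the theorem makes sense as a pairing of eigenvectors of $\epsilon$. Second, the $\epsilon$-action squares to the identity on $V_w^\Gamma$, so any $P\in V_w^\Gamma$ decomposes as $P=P^++P^-$ with $P|\epsilon=P^+-P^-$. Expanding bilinearly in Theorem~\ref{thm_hab}(a),
\begin{equation*}
6C_k(f,g)=\{\rho_f,\overline{\rho_g}\}=\sum_{\kappa_1,\kappa_2\in\{\pm\}}\{\rho_f^{\kappa_1},\overline{\rho_g^{\kappa_2}}\}.
\end{equation*}
Applying \eqref{conj1} to each $\epsilon$-homogeneous term gives $\{\rho_f^{\kappa_1},\overline{\rho_g^{\kappa_2}}\}=\kappa_1\kappa_2(-1)^{w+1}\{\rho_f^{\kappa_1},\overline{\rho_g^{\kappa_2}}\}$, so the term vanishes unless $\kappa_1\kappa_2=(-1)^{w+1}=(-1)^{k-1}$. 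Hence exactly the two terms in the statement survive, and their sum is $6C_k(f,g)$.

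The remaining task is to prove these two surviving terms are equal. Here I would use Theorem~\ref{thm_hab}(b) for the pair $f,g^*\in S_k(\Gamma)$, which lies in $S_k(\Gamma)$ because $\Gamma$ is normalized by $\epsilon$. This gives $\{\rho_f,\rho_{g^*}\}=0$. Translating $\rho_{g^*}$ via \eqref{2.1}, we have $\rho_{g^*}=(-1)^{w+1}\overline{\rho_g}|\epsilon$. Applying \eqref{conj1} with $Q=\overline{\rho_g}|\epsilon$ and using $\epsilon^2=I$, one finds
\begin{equation*}
\{\rho_f,\overline{\rho_g}|\epsilon\}=(-1)^{w+1}\{\rho_f|\epsilon,\overline{\rho_g}\},
\end{equation*}
so the vanishing from Theorem~\ref{thm_hab}(b) becomes $\{\rho_f|\epsilon,\overline{\rho_g}\}=0$. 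Writing $\rho_f|\epsilon=\rho_f^+-\rho_f^-$ and expanding, and then discarding the two terms killed by \eqref{conj1} exactly as above, this identity reads
\begin{equation*}
\{\rho_f^{\kappa_1},\overline{\rho_g^{\kappa_2}}\}-\{\rho_f^{-\kappa_1},\overline{\rho_g^{-\kappa_2}}\}=0
\end{equation*}
over the two surviving $(\kappa_1,\kappa_2)$'s. Combined with the fact that their sum is $6C_k(f,g)$, each equals $3C_k(f,g)$, which is exactly the claim.

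The only nontrivial step is the bookkeeping with $\epsilon$ and complex conjugation in passing from $\{\rho_f,\rho_{g^*}\}=0$ to $\{\rho_f|\epsilon,\overline{\rho_g}\}=0$. Both signs from \eqref{2.1} and \eqref{conj1} contribute a $(-1)^{w+1}$, and these must cancel cleanly; once that is checked, everything else is formal bilinear algebra using the eigenspace decomposition under $\epsilon$.
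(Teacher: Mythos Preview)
Your proof is correct and follows essentially the same approach as the paper: split Haberland's formula into $\epsilon$-eigencomponents, use \eqref{conj1} to kill the wrong-parity terms, and then use Theorem~\ref{thm_hab}(b) applied to the pair $(f,g^*)$ together with \eqref{2.1} to show the two surviving terms are equal. The paper's version is slightly more direct---it converts $\overline{\rho_g^\pm}$ into $\rho_{g^*}^\pm$ via \eqref{2.1} and observes that the desired equality is literally $\{\rho_f,\rho_{g^*}\}=0$---whereas you route through $\{\rho_f|\epsilon,\overline{\rho_g}\}=0$, but the two computations are equivalent.
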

\begin{proof} We assume $k$ even, the case $k$ odd being entirely
similar. In view of Theorem \ref{thm_hab} (a) and \eqref{conj1}, it is enough to show that 
$\{\rho_f^+ , \overline{\rho_g^-}\}=\{\rho_f^- , \overline{\rho_g^+}\}$. By \eqref{2.1}, we
have $\overline{\rho_g^+}=(-1)^{w+1}\rho_{g^*}^+$, $\overline{\rho_g^-}=(-1)^w\rho_{g^*}^-$,
and the previous
equality reduces to $\{\rho_f , \rho_{g^*}\}=0$, which is Theorem \ref{thm_hab} b).
\end{proof}

\section{Coboundary polynomials}\label{sec_cw}

In this section we show that the space of coboundary polynomials 
$$C_w^\Gamma=\{P|(1-S): P \in V_w^\Gamma \cap \ker(1-T) \}
$$ is the radical
of the bilinear form $\{\cdot, \cdot\}$ on $W_w^\Gamma$. The dimension of $C_w^\Gamma$ equals
the dimension of the Eisenstein subspace $\EE_k(\Gamma)\subset M_k(\Gamma)$. For
$\Gamma=\Gamma_0(N)$, we characterize those $N$ for which
$(C_w^\Gamma)^-$ is trivial, namely those $N$ for which the map $\rho^-:S_k(\Gamma)\rightarrow (W_w^\Gamma)^-$ is
an isomorphism, as in the full level case.

\begin{lemma}\label{l4.4} Let $\Gamma$ be a finite index subgroup of $\Gamma_1$. The period
polynomials $W_w^\Gamma$ are
orthogonal to the coboundary polynomials $C_w^\Gamma\subset W_w^\Gamma$ with respect to the
pairing $\{\cdot , \cdot \}$.
\end{lemma}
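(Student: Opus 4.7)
The plan is to fix $P\in W_w^\Gamma$ and $Q=R|(1-S)$ with $R\in V_w^\Gamma$ satisfying $R|T=R$, and compute $\{P,Q\}=\lla P|(T-T^{-1}),R-R|S\rra$ directly, exploiting the $\Gamma_1$-invariance of the pairing $\lla\cdot,\cdot\rra$ together with the three defining properties of $P$ (the relations $P|(1+S)=0$, $P|(1+U+U^2)=0$, and $P|J=P$) and the single defining property of $R$ ($R|T=R$). Throughout I will use that $|S^{-1}=|S$ and $|U^{-1}=|U^2$ on $V_w^\Gamma$, both consequences of $|J=\mathrm{id}$.

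First I would expand the pairing into four summands. The two ``diagonal'' terms $\lla P|T^{\pm 1},R\rra$ both equal $\lla P,R\rra$: rewrite $R=R|T^{\pm 1}$ and apply invariance. These cancel, reducing the problem to showing the cross-term $\lla P|(T^{-1}-T),R|S\rra$ vanishes. The next step is to move $S$ to the other side by invariance, obtaining $\lla P|(T^{-1}S-TS),R\rra=\lla P|(T^{-1}S-U),R\rra$. Using the matrix identity $T^{-1}S=SUT$ modulo $J$ (which one verifies from $U=TS$, $U^3=J$), together with $P|S=-P$, one gets $P|T^{-1}S=-P|UT$, and so the cross-term collapses to $-\lla P|U,R\rra-\lla (P|U)|T,R\rra$. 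Applying $R=R|T$ and invariance once more, both terms equal $-\lla P|U,R\rra$, and the whole calculation reduces to the single identity $\lla P|U,R\rra=0$.

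For this final key identity, the plan is to invoke the second period relation $P|(1+U+U^2)=0$, which upon pairing with $R$ gives $\lla P|U,R\rra+\lla P|U^2,R\rra=-\lla P,R\rra$. By invariance, $\lla P|U^2,R\rra=\lla P,R|U^{-2}\rra=\lla P,R|U\rra$. Since $R|T=R$ and $U=TS$, we have $R|U=(R|T)|S=R|S$, so $\lla P,R|U\rra=\lla P,R|S\rra=\lla P|S^{-1},R\rra=\lla P|S,R\rra=-\lla P,R\rra$, where the last step uses the first period relation $P|S=-P$. Substituting back gives $\lla P|U,R\rra=0$, hence $\{P,Q\}=0$.

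The main obstacle is the bookkeeping with matrix identities in $\{S,T,U,J\}$ under the right action — in particular, knowing when one may substitute $|S^{-1}$ by $|S$ and $|U^{-1}$ by $|U^2$ (both only valid on the $J$-invariant subspace $V_w^\Gamma$), and remembering that the period relation $P|S=-P$ may be used only on the outermost $P$, never on intermediate expressions such as $P|U$ which do not lie in $W_w^\Gamma$. Once these conventions are handled carefully, the argument is a short sequence of applications of invariance and the two period relations.
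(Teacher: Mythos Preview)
Your argument is correct. You reduce the pairing, via a short chain of $\Gamma_1$-invariance moves and the matrix identity $T^{-1}S=SUT$, to $\{P,Q\}=-2\lla P|U,R\rra$, and then show $\lla P|U,R\rra=0$ from the two period relations together with $R|T=R$. Every step checks out (in fact $T^{-1}S=SUT$ holds on the nose in $\Gamma_1$, not just modulo $J$).

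The paper takes a different route: with the roles swapped (coboundary on the left, period polynomial on the right), it writes down a single identity in $\Z[\og_1]$,
\[
(1-S)(T-T^{-1})=(T-1)(2+T^{-1}+ST-S)+(1+TS+ST^{-1})(1-S),
\]
so that after applying the coboundary generator $P'$ (with $P'|T=P'$) the first summand dies, and after pairing with $Q\in W_w^\Gamma$ and moving the second summand across one gets $Q|(1-S)(1+U+U^2)=2Q|(1+U+U^2)=0$. Your approach trades this single group-ring identity for a sequence of elementary invariance steps and isolates the clean intermediate fact $\lla P|U,R\rra=0$; the paper's version is terser and, more importantly, the same identity is reused verbatim later in the proof of Proposition~\ref{l7.3}(a), where the extended pairing on $\wW_w^\Gamma$ is analyzed. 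So the paper's formulation pays off structurally, while yours is self-contained and arguably more transparent for this lemma alone.
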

\begin{proof}
 Let $P|(1-S)\in C_w^\Gamma$ with $P|1-T=0$, and let $Q\in W_w^\Gamma$. Then $\lla
P|(1-S)(T-T^{-1}),Q \rra=0$ follows from the following
relation in $\Z[\og_1]$
\begin{equation}\label{4.8}
(1-S)(T-T^{-1})=(T-1)(2+T^{-1}+ST-S)+(1+TS+ST^{-1})(1-S)
\end{equation}
using the $\Gamma_1$ invariance of the pairing $\lla\cdot , \cdot \rra$ (recall $U=TS$). 
\end{proof}

Let $e_\infty(\Gamma)$, $e_\infty^\reg(\Gamma)$ denote the number of inequivalent cusps,
respectively regular cusps \cite[Ch. 3]{DS}.  The next lemma shows that $\dim
C_w^\Gamma=\dim \EE_k(\Gamma)$.
\begin{lemma} \label{L7.1} 
Let $\Gamma$ be any finite index subgroup of $\Gamma_1$. The dimension of $C_w^\Gamma$ equals:
$e_\infty(\Gamma)$ if $k>2$ is even; $e_\infty(\Gamma)$-1 if $k=2$; $e_\infty^\reg(\Gamma)$ if
$k>2$ is odd.
\end{lemma}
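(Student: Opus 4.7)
The plan is to use the surjection $\phi:V_w^\Gamma\cap\ker(1-T)\to C_w^\Gamma$ defined by $P\mapsto P|(1-S)$, which gives $\dim C_w^\Gamma=\dim(V_w^\Gamma\cap\ker(1-T))-\dim\ker\phi$. I compute each piece in turn.

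For the first piece, I use the fact that the orbits of right-multiplication by $T$ on $\og\backslash\og_1$ are in bijection with the cusps of $\Gamma$: if $A_c$ represents a cusp $c$ of width $h_c$, the orbit has size $h_c$ and $A_cT^{h_c}A_c^{-1}\in\pm\Gamma$. The condition $P|T=P$ shows $P(A_cT^j)=P(A_c)|T^j$, and closing up the orbit after $h_c$ steps gives a compatibility
\[
P(A_c)|T^{h_c}=\varepsilon_c P(A_c),
\]
with $\varepsilon_c=+1$ at regular cusps and $\varepsilon_c=(-1)^w$ at irregular ones, the latter case using the defining relation $P(-A)=(-1)^wP(A)$ of $V_w^\Gamma$ when $A_cT^{h_c}A_c^{-1}\in-\Gamma$. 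Since $|T^{h_c}$ acts on $V_w$ as $X\mapsto X+h_c$, the polynomial identity $P(A_c)(X+h_c)=\varepsilon_c P(A_c)(X)$ in $V_w$ has as solution space the constants when $\varepsilon_c=+1$ and only zero when $\varepsilon_c=-1$. Summing contributions over all cusps gives $\dim(V_w^\Gamma\cap\ker(1-T))=e_\infty(\Gamma)$ when $w$ is even and $e_\infty^{\reg}(\Gamma)$ when $w$ is odd (under the convention that this vanishes when $-I\in\Gamma$ and $k$ is odd, which is already forced because $V_w^\Gamma$ itself is then zero).

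For the second piece, an element of $\ker\phi$ satisfies $P|T=P|S=P|J=P$; since $T$, $S$, $J$ generate $\Gamma_1$, any such $P$ is $\Gamma_1$-invariant and hence determined by $P(I)\in V_w$, which must be $\Gamma$-invariant. For $w>0$, any translation $T^h\in\Gamma$ forces $P(I)$ to be a constant, and then applying any element of $\Gamma$ with nonzero lower-left entry forces $P(I)=0$; so $\ker\phi=0$. For $w=0$, $V_0=\C$ carries the trivial $\Gamma_1$-action, and $\ker\phi$ consists of the one-dimensional space of constant functions on $\og\backslash\og_1$. Subtracting yields $\dim C_w^\Gamma=e_\infty(\Gamma)$ when $k>2$ is even, $\dim C_w^\Gamma=e_\infty^{\reg}(\Gamma)$ when $k>2$ is odd, and $\dim C_w^\Gamma=e_\infty(\Gamma)-1$ when $k=2$, as claimed.

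The main technical step is the derivation of the compatibility condition at each cusp together with the correct sign $\varepsilon_c$ for irregular cusps; this requires keeping careful track of the interaction between the coset space $\og\backslash\og_1$, the width/regularity of each cusp, and the involution $J$ built into the definition of $V_w^\Gamma$. Everything after that is elementary linear algebra in $V_w$.
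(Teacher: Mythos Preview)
Your proof is correct and follows essentially the same approach as the paper: both realize $C_w^\Gamma$ as the image of $\phi:V_w^\Gamma\cap\ker(1-T)\to C_w^\Gamma$, show that elements of the domain take constant values, count the domain via the cusp decomposition, and compute $\ker\phi$ separately. The only presentational difference is that the paper first proves $P(A)=c_A$ is constant for every $A$ (using some $n$ with $AT^{-n}=A$ in $\Gamma\backslash\Gamma_1$, so $2h_c$ works uniformly) and then encodes the regular/irregular distinction through the relation $c_{AJ}=(-1)^w c_A$, whereas you build the sign $\varepsilon_c$ into the orbit closing-up from the start; these are equivalent bookkeepings of the same argument.
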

\begin{proof} 
 Let $P\in V_w^\Gamma \cap \ker(1-T)$.
Then $P|T^n= P$ for every $n\in \Z$, that is $P(A T^{-n})|T^n =P(A)$ for $A\in
\Gamma\backslash\Gamma_1$. Since $\Gamma\backslash\Gamma_1$ is finite, there is $n$ such
that $AT^{-n}=A$, and $P(A)(X+n)=P(A)(X)$. Since the only periodic polynomials are the
constants, it follows that $P(A)(X)=c_A\in \C$, with $c_{AT}=c_A$. Since $P|J=P$ we also
have $c_{AJ}=(-1)^w c_A$. 
Hence we have:
\begin{equation}\label{cw}
C_w^\Gamma=\{(c_A-c_{AS^{-1}}X^w)_A\in V_w^\Gamma: c_A\in \C, \ c_{AT}=c_A, \ c_{AJ}=(-1)^w c_A
\}.
 \end{equation}  

If $k>2$ is even it follows that $ \dim C_w^\Gamma=|\Gamma\backslash \Gamma_1/
\Gamma_{1\infty}|$,
with $\Gamma_{1\infty}=\{\pm T^n: n\in \Z\}$ the stabilizer of $\infty$. Since the map 
\[
\Gamma\backslash \Gamma_1/ \Gamma_{1\infty}\rightarrow  \Gamma \backslash \mathbb{P}^1(\Q),
\quad [\gamma]\rightarrow [\gamma\infty]
\]
is a bijection and  $|\Gamma \backslash \mathbb{P}^1(\Q)|=e_\infty(\Gamma)$, the claim
follows.

If $k=2$, we identify $\C^{e_\infty(\Gamma)}$ with the vector space $\{(c_A)_{A\in
\Gamma\backslash \Gamma_1}: c_{AT}=c_A=c_{AJ}\in\C \}$ and define the (surjective) map
$\C^{e_\infty(\Gamma)}\rightarrow C_w^\Gamma$ by $(c_A)_A \rightarrow (c_A-c_{AS})_A $.  Its
kernel consists of those vectors $(c_A)_A$ with $c_{AT}=c_{AS}=c_{AJ}=c_{A}$. Since $S,T,J$
generate $\Gamma_1$, it follows that $c_A=c$ for all $A\in \Gamma\backslash\Gamma_1$, so the
kernel is isomorphic to $\C$. Therefore  $ \dim C_w^\Gamma=e_\infty(\Gamma)-1$.

If $k>2$ is odd (so $-1\notin \Gamma$), we have $c_{AJ}=-c_A$. Therefore $\dim C_w^\Gamma$
equals the number of classes $[A] \in \Gamma\backslash \Gamma_1/\Gamma_{1\infty}$, such that
the two associated classes $[A]^+, [AJ]^+\in \Gamma\backslash\Gamma_1/\Gamma_{1\infty}^+$ are
distinct, where $\Gamma_{1\infty}^+=\{T^n:n\in \Z\}$ (when $[A]^+=[AJ]^+$ then clearly
$c_A=c_{AJ}=0$). But $[A]^+\ne[AJ]^+$ precisely when $[A]$ corresponds to a regular cusp of
$\Gamma$ since $[A]^+=[AJ]^+$ means that $A^{-1}\gamma A=-T^n$ for some $\gamma\in \Gamma$,
$n>1$, so the cusp $A\infty$ is irregular. We conclude that $\dim
C_w^\Gamma=e_\infty^\reg(\Gamma)$.
\end{proof}

\begin{definition}\label{d_cusp}
Following the proof of Lemma \ref{L7.1}, we call \emph{cusp} a double coset $\cc\in
\Gamma\backslash
\Gamma_1/\Gamma_{1\infty}$, which corresponds to the $\Gamma$-equivalence class of the usual
cusp $A\infty$ for any representative $A\in\cc$. We call \emph{regular} those
cusps $\cc=[A]$ such that the double cosets $[A]^+, [AJ]^+ \in
\Gamma\backslash\Gamma_1/\Gamma_{1\infty}^+$ are distinct. The terminology agrees with the
usual one for the cusp $A\infty$ by the last paragraph in the proof of the lemma.
\end{definition}

For $\Gamma_0(N)$ it turns out that $(C_w^\Gamma)^-$ is often
trivial, in which case $\rho^-:S_k(\Gamma)\rightarrow (W_w^\Gamma)^-$ is an isomorphism just like for $\Gamma_1$.
The following proposition was discovered using SAGE \cite{Sg}.
\begin{prop}\label{prop7.2} Let $\Gamma=\Gamma_0(N)$. Then $(C_w^\Gamma)^-=\{0\}$
if and
only if $N=2^e N'$ with $N'$ odd square free and $0\le e\le 3$.   
\end{prop}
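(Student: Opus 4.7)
The plan is to translate $(C_w^\Gamma)^-=\{0\}$ into a condition on how $\epsilon$ permutes the cusps of $\Gamma_0(N)$, and then unpack this into an arithmetic condition on $N$. Since $-I\in\Gamma_0(N)$, the defining relation $P(A)=(-1)^w P(-A)$ forces $V_w^\Gamma=\{0\}$ when $w$ is odd, so throughout we may assume $k$ is even.

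From the explicit description \eqref{cw}, the assignment $\phi:(c_A)\mapsto (c_A-c_{AS^{-1}}X^w)_A$ defines a surjection from the space $\mathcal{F}$ of tuples satisfying $c_{AT}=c_A$ onto $C_w^\Gamma$. Since $w$ is even and $-I\in\Gamma$, the space $\mathcal{F}$ is canonically the space of $\C$-valued functions on the set of cusps $\Gamma\backslash\Gamma_1/\Gamma_{1\infty}\cong \Gamma\backslash\mathbb{P}^1(\Q)$, via $[A]\mapsto [A\infty]$. As in the proof of Lemma \ref{L7.1}, $\ker\phi$ is trivial when $k>2$ and consists of the constant functions when $k=2$.

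Next, I would check that $\phi$ is $\epsilon$-equivariant, where $\epsilon$ acts on $\mathcal{F}$ by $\iota:(c_A)\mapsto (c_{A'})$ with $A'=\epsilon A\epsilon$. Starting from \eqref{eps} and using the identity $\epsilon S\epsilon=-S$ together with $-B=B$ in $\Gamma\backslash\Gamma_1$ (since $-I\in\Gamma_0(N)$), a direct calculation establishes this. Because $\ker\phi$ lies entirely in $\mathcal{F}^+$, passing to $-$-eigenspaces yields $(C_w^\Gamma)^-\cong \mathcal{F}^-$. Under the identification with functions on cusps, $\iota$ corresponds to the involution $a/c\mapsto -a/c$, so $(C_w^\Gamma)^-=\{0\}$ if and only if every cusp of $\Gamma_0(N)$ is fixed by $a/c\mapsto -a/c$.

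Finally, I would invoke the classical parametrization of cusps of $\Gamma_0(N)$: they are in bijection with pairs $(c,a)$ where $c\mid N$, $c>0$, and $a\in (\Z/d)^*$ with $d=\gcd(c,N/c)$, under the equivalence $(c,a)\sim(c,a')$ iff $a\equiv a'\pmod d$. Hence $a/c\sim -a/c$ iff $d\mid 2a$; since $\gcd(a,d)=1$, this forces $d\in\{1,2\}$. The condition $\gcd(c,N/c)\le 2$ for every $c\mid N$ unpacks prime by prime: choosing $c=p^{\lceil v_p(N)/2\rceil}$ gives $\gcd(c,N/c)\ge p^{\lfloor v_p(N)/2\rfloor}$, forcing $v_p(N)\le 1$ for odd $p$ and $v_2(N)\le 3$; the converse follows immediately from $\gcd(2^a m,\,2^{e-a}(N'/m))=2^{\min(a,e-a)}\le 2$ when $N'$ is squarefree and $e\le 3$. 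The main obstacle is step 3, namely the careful verification of the $\epsilon$-equivariance of $\phi$, with the sign from $\epsilon S\epsilon=-S$ absorbed by $-I\in\Gamma_0(N)$.
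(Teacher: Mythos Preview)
Your argument is correct and takes a genuinely different route from the paper's. Both proofs begin by identifying $(C_w^\Gamma)^-$ with the space of $T$-invariant tuples $(c_A)$ satisfying $c_A=-c_{A'}$, but they diverge in how they analyze when this space vanishes. The paper works directly on the coset space $\og\backslash\og_1\cong\mathbb{P}^1(\Z/N\Z)$: for the ``only if'' direction it exhibits, given $t\ge 3$ with $t^2\mid N$, an explicit matrix $A$ with $[A]\ne[A']$; for the ``if'' direction it carries out a hands-on computation in $\mathbb{P}^1(\Z/N\Z)$, producing integers $a,b$ so that $(d:k)$ and $(-d:k)$ coincide, forcing $c_{(d:k)}=0$. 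Your approach instead passes to the quotient by $\Gamma_{1\infty}$, i.e.\ to the set of cusps, observes that $(C_w^\Gamma)^-=0$ is equivalent to the involution $a/c\mapsto -a/c$ fixing every cusp, and then invokes the standard parametrization of cusps of $\Gamma_0(N)$ by pairs $(c,a)$ with $c\mid N$ and $a\in(\Z/\gcd(c,N/c))^*$ to reduce to the clean arithmetic criterion $\gcd(c,N/c)\le 2$ for all $c\mid N$. Your route is more conceptual and shorter, at the cost of citing the cusp classification as a black box; the paper's proof is more self-contained and elementary, requiring no external structure results. Your verification of the $\epsilon$-equivariance of $\phi$ (using $\epsilon S\epsilon=-S$ and $-I\in\Gamma_0(N)$) and the observation that $\ker\phi\subset\mathcal{F}^+$ are exactly what is needed to make the passage to $\mathcal{F}^-$ rigorous, including for $k=2$.
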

\begin{proof} From the proof of Lemma \ref{L7.1} we identify $(C_w^\Gamma)^-$ with the
space $(\C^{e_\infty(\Gamma)})^-$ of vectors $(c_A)_{A\in\og\backslash\og_1}$ with $c_A=c_{AT}$
and $c_A=-c_{A'}$ (including for $k=2$). 

Assume $N$ does not satisfy the conditions, so there exists $t\ge 3$ with
$t^2|N$. We claim that $[A]\ne [A']$ for $A= \left(\begin{smallmatrix} x & y \\
t & z \end{smallmatrix}\right)\in \Gamma_1$, so $(C_w^\Gamma)^-\ne\{0\}$. Assuming by
contradiction that $\gamma A T^s=A'$ for $\gamma=\left(\begin{smallmatrix} * & * \\
c & d \end{smallmatrix}\right)\in \Gamma$, it follows that $cx+dt=-t, c(y+sx)+d(z+st)=z$. The
first equation implies that $t|d+1$ while the second that $t|d-1$, a contradiction with $t\ge
3$. 

Assuming $N$ satisfies the conditions, let $(c_A)_{A\in\og\backslash\og_1}$ with $c_A=c_{AT}$
and $c_A=-c_{A'}$. Identifying the coset space $\Gamma_0(N)\backslash\Gamma_1$ with
$\mathbb{P}^1(\Z/N\Z)$, it follows that 
$c_{(a:b)}=c_{(a:a+b)}, \ c_{(a:b)}=-c_{(-a:b)}.
$ 
The second relation implies $c_{(0:1)}=0$. Let $N=d d'$ and $k\in \Z$, $(k,d)=1$. We will
show that $c_{(d:k)}=0$. We have:
\begin{equation*}
c_{(d:k)} =c_{(d:k+ad)} = -c_{(-d:k)}=-c_{((bd'-1)d: k)} 
\end{equation*}
and it is enough to find $ a,b\in \Z$ with $(bd'-1,N)=1$ and $k\equiv (bd'-1)(k+ad)
\pmod{N}$. The latter equation can be written  
$ k(bd'-2)\equiv ad \pmod{d d'}$
and the hypothesis on $N$ ensures that $(d,d')|2$, so that we can find $b,u$ such that
$bd'-2=du$. Taking $a\equiv ku \pmod{d'}$, it follows that $(d:k+ad)=((bd'-1)d:k)$, which
implies that $c_{(d:k)}=0$. It follows that $c_A=0$ for all $A\in \og\backslash\og_1$,
finishing the proof.
\end{proof}

\section{Hecke operators}\label{sec4}

Following the Eichler integral method sketched in \cite{Za93} for the modular group, we show that the same
elements $\wT_n$ as in the full level case define actions on period polynomials corresponding to a large class of
double coset operators on modular forms, including the Hecke operators and Atkin-Lehner operators. On modular
symbols, the action of the same type of double coset operators was determined by Merel, and the results of
$\S$\ref{sec5.11} parallel those of \cite{Me}. We find the adjoints of these operators with respect to the pairing
$\{\cdot ,\cdot\}$ in $\S$\ref{sec5.12}.\footnote{For $\Gamma_1$, the Hecke equivariance of the pairing is
mentioned without proof in \cite[p.96]{GKZ}.} In $\S$\ref{sec5.1} we give a new proof of the rationality of the
plus and minus parts of period polynomials of newforms, while in $\S$\ref{sec5.2} we discuss modular forms with
Nebentypus. We end with a discussion of the numerical computation of period polynomials,
Hecke eigenvalues, and Petersson norms of newforms. 
 
\subsection{The universal Hecke operators}\label{sec5.11}

Let $M_n$ be the set of integer matrices of determinant $n$, set  $\ov{M}_n=M_n/\{\pm I\}$, and let 
$R_n=\Q[\ov{M}_n]$. Thus $\og_1$ acts on $R_n$ by left and right multiplication. Let
$$M_n^\infty=\big\{\left(\begin{smallmatrix} a & b \\
0 & d \end{smallmatrix}\right):n=ad, 0\le b<d\big\}$$ be the usual system of representatives for
$\Gamma_1\backslash M_n$, and $T_n^\infty=\sum_{M\in M_n^\infty} M \in R_n$. Following 
\cite{CZ}, let  $\wT_n, Y_n\in R_n$ be such that
\begin{equation}\label{hecke}
T_n^{\infty}(1-S)=(1-S)\wT_n+(1-T)Y_n . 
\end{equation}
We will show that for all $n$, the elements $\wT_n$ define Hecke operators on period
polynomials for a large class of congruence subgroups. Note that the elements $\wT_n$ are universal, not depending
on the weight or level. 

Let $\Gamma$ be a finite index subgroup of $\Gamma_1$ normalized by $\epsilon$, which we will often specialize to
be $\Gamma_1(N)$ or $\Gamma_0(N)$. Fix an integer $n\ge 1$, and let $\Sigma_n\subset M_n$ such that
$\Sigma_n=\Gamma \Sigma_n =\Sigma_n \Gamma$ and $\Sigma_n$
is a disjoint finite union of cosets $\Gamma \sigma$. Such double cosets $\Sigma_n$ define 
operators $[\Sigma_n]$ on $f\in M_k(\Gamma)$ 
\[
f|[\Sigma_n]=n^{w+1} \sum_{\sigma \in \Gamma \backslash \Sigma_n} f|_k \sigma. 
\]
Setting $g^\vee=g^{-1}\det g$, the adjoint of $[\Sigma_n]$ with respect to the Petersson product is given by 
$\Sigma_n^\vee=\{g^\vee: g\in \Sigma_n\}$, namely $(f|[\Sigma_n], g)=(f,g|[\Sigma_n^\vee] )$.

We now define an action of $M_n$ on $V_w^\Gamma$ which depends on $\Sigma_n$, and which is based on
the following property of the pair $(\Gamma,\Sigma_n)$
 \begin{equation} \label{eq_star} \tag{H} 
\text{The map $\Gamma\backslash
\Sigma_n\rightarrow  \Gamma_1\backslash\Gamma_1\Sigma_n, \ \ \ $  $\Gamma \sigma \mapsto \Gamma_1 \sigma $ 
is bijective.}
\end{equation}
See \cite[Prop. 3.36]{Sh} for a large class of congruence subgroups when this property holds.

For $M\in M_n$, $A\in \Gamma_1$ and $M A^{-1}\in \Gamma_1 \Sigma_n 
$, there exists a decomposition $M A^{-1}=A_M^{-1} M_A$, with
$M_A\in \Sigma_n$, $A_M\in \Gamma_1$, and by \eqref{eq_star} the coset $\Gamma A_M$ is independent of the
decomposition. Moreover $\Gamma A_M$ depends only on $\Gamma A$, since $\Gamma \Sigma_n=\Sigma_n \Gamma$.
For $P\in V_w^\Gamma$ we define an element $P|_{\Sigma}M =P|_{\Sigma_n}M\in V_w^\Gamma$
by\footnote{With the notation $P|_{\Sigma}M$, the dependence on $n$ is recorded in the fact that $M\in M_n$.}
\begin{equation}\label{eq_act}
P|_{\Sigma}M(A)=\begin{cases} P(A_M)|_{-w}M  & \text{ if } M A^{-1} \in \Gamma_1 \Sigma_n \\
          0 & \text{ otherwise.}
         \end{cases}
\end{equation}
Note that both $M$ and $JM$ act in the same way (recall $J=-I$), 
so the action of $\ov{M}_n=M_n/ \{\pm I\}$ is also well defined. This action extends linearly to an action of
$R_n$ on $V_w^\Gamma$. In the same way we define an action of $\ov{M}_n$ and $R_n$ on the Eichler integrals $\tf$
in \eqref{eq_eich}.

We are interested in the following double cosets $\Sigma_n$ satisfying \eqref{eq_star}, for $\Gamma=
\Gamma_1(N)$ or $\Gamma_0(N)$. 
\begin{enumerate}
 \item[(1)] The double coset $\Delta_n$ consisting of $\left(\begin{smallmatrix} a & b \\
c & d \end{smallmatrix}\right)\in M_n$ with  $N|c$, and $a\equiv 1 \pmod{N}$ if $\Gamma=\Gamma_1(N)$, or $(a,N)=1$
 if $\Gamma=\Gamma_0(N)$. The operator $[\Delta_n]$ is the usual
Hecke operator, denoted by $T_n$, and  Property \eqref{eq_star} follows from \cite[Prop. 3.36]{Sh}. Note that for
$(n,N)=1$ we have $\Gamma_1 \Delta_n=M_n$, so the second case in \eqref{eq_act} does not occur.  

\item[(2)] The double coset $\Delta_n^\vee$ for $(n,N)=1$. The operator $[\Delta_n^\vee]$ is denoted by $T_n^*$,
the adjoint of
$T_n$ with respect to the Petersson inner product. Property \eqref{eq_star} can be checked directly. 

\item[(3)] The double coset $\Theta_n=\Gamma w_n \Gamma $ for $N=n n'$, $(n,n')=1$, where
$w_n=\left(\begin{smallmatrix} nx & y \\
Nz & nt \end{smallmatrix}\right)\in M_n$ with $x,y,z,t\in \Z$ for $\Gamma=\Gamma_0(N)$, while for 
$\Gamma=\Gamma_1(N)$ we impose the extra conditions $nx\equiv 1 \pmod{n'}$, $y\equiv 1 \pmod{n}.$  Then
$\Gamma\backslash \Theta_n$ has one element so Property \eqref{eq_star} is trivially satisfied. The operator
$[\Theta_n]$ is denoted by $W_n$, and if $W_n^*=[\Theta_n^\vee]$ denotes its adjoint we have $f|W_n|W_n^*=n^w f$.
For $\Gamma=\Gamma_0(N)$ (so that $W_n=W_n^*$),  the usual Atkin-Lehner
involution is $W_n/n^{w/2}$ due to our choice of normalization. 

\item[(4)] The double coset $\Theta_n^\vee$ with $\Theta_n$ as in (3). If $\Gamma=\Gamma_0(N)$ then
$[\Theta_n^\vee]=[\Theta_n]$,
and if
$\Gamma=\Gamma_1(N)$ then $[\Theta_n^\vee]=\la d \ra[\Theta_n]$ for $d\equiv -1
\pmod{n}$, $nd\equiv 1 \pmod{n'}$, where $\la d\ra$ denotes the diamond operator.
\end{enumerate}

\begin{remark}\label{r5.1} Assume $\Gamma=\Gamma_1(N)$ and $\Sigma_n$ is as in (1) or (3). The action of $M_n$
on $V_w^\Gamma$ can be determined as follows. Applying adjoint we have that
$MA^{-1}\in\Gamma_1 \Sigma_n$ if and only if
$$
\left(\begin{smallmatrix} a & b \\
c & d \end{smallmatrix}\right)=AM^\vee=M_A^\vee A_M  \in \Sigma_n^\vee\Gamma_1.
$$ 

For $\Sigma_n=\Delta_n$, the inclusion is satisfied if and only if $\gcd(c,d,N)=1$, and in that case
$A_M= \left(\begin{smallmatrix} * & * \\
c' & d' \end{smallmatrix}\right)\in \Gamma_1$ has $c'\equiv c, d'\equiv d\pmod{N}$, which
uniquely defines its class in $\Gamma\backslash \Gamma_1$. 

For $\Sigma_n=\Theta_n$, let $N=n n'$ with $n\|N$. The inclusion is satisfied if and only if $n|c, n|d$, and
the class of $A_M=\left(\begin{smallmatrix} * & * \\
c' & d' \end{smallmatrix}\right)$ in $\Gamma\backslash \Gamma_1$ is determined by
\[c'\equiv -a ,\ d'\equiv -b \pmod{n}, \quad c'\equiv c/n ,\ d'\equiv d/n \pmod{n'}.  
\]
When $\Gamma=\Gamma_0(N)$ and $\Sigma_n=\Theta_n$, the last congruences are replaced by $yc'\equiv a, yd'\equiv b
\pmod{n}$ and $tc'\equiv c/n ,\ td'\equiv d/n \pmod{n'}$ where $t,y$ are any integers such that $nxt-n'yz=1$
(taking $M_A^\vee=\left(\begin{smallmatrix} nx & y \\
Nz & nt \end{smallmatrix}\right)\in M_n$). The class of $(c',d')$ in $P^1(\Z/N\Z)\sim \Gamma\backslash\Gamma_1$ is
then independent of $y,t$. 
\end{remark}

While the operation $P|_{\Sigma}M$ is not a proper action, it is compatible with the action of $\Gamma_1$ and of
$\epsilon$. For
$h\in \Gamma_1$, $M\in M_n$ a formal computation shows that
\begin{equation}\label{Ecom} 
P|_{\Sigma}M|h=P|_{\Sigma}Mh, \ \ P|h|_{\Sigma} M=P|_{\Sigma}hM, \ \ 
P|_{\Sigma}M|\epsilon=P|\epsilon|_{\Sigma'}\epsilon M\epsilon,
\end{equation}
where $\Sigma_n'=\{g'=\epsilon g\epsilon : g\in \Sigma_n \}$. Using the compatibility with the $\Gamma_1$ action,
we
show that the universal operators $\wT_n$ play on period polynomials the role  of \emph{all} double coset operators
$[\Sigma_n]$ on modular forms, as long as the pair $(\Gamma, \Sigma_n)$ satisfies \eqref{eq_star}.

\begin{prop} \label{p4.2}
Assume the pair $(\Gamma, \Sigma_n)$ satisfies \eqref{eq_star}. If 
$\wT_n\in R_n$ satisfies \eqref{hecke} then for every $f\in S_k(\Gamma)$
$$\rho_{f|[\Sigma_n]}=\rho_f |_{\Sigma} \wT_n.$$
\noindent In particular for $\Gamma=\Gamma_1(N)$, we have $\rho_{f|T_n}=\rho_f |_{\Delta} \wT_n$ and, if $n\|N$,
$\rho_{f|W_n}=\rho_f |_{\Theta} \wT_n$.
\end{prop}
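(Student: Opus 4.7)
The plan is to lift the identity from period polynomials to the Eichler integrals $\tf$, where the universal relation \eqref{hecke} can be applied directly. The action of $R_n$ on $V_w^\Gamma$ given by \eqref{eq_act} extends verbatim to $\tf$ (now viewed as a map $\Gamma\backslash\Gamma_1\to\A$), and the compatibility relations \eqref{Ecom} with the $\Gamma_1$-action continue to hold, extended linearly to all of $R_n$.

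The key identity I want to establish is
\[
\tf|_{\Sigma} T_n^\infty = \widetilde{f|[\Sigma_n]}.
\]
To prove it, fix $M = \left(\begin{smallmatrix} a & b \\ c & d \end{smallmatrix}\right) \in M_n^\infty$ and $A \in \Gamma_1$ with $MA^{-1} \in \Gamma_1\Sigma_n$, and write $MA^{-1} = A_M^{-1} M_A$ as in \eqref{eq_act}. The substitution $t = Mu$ in the integral defining $\tf(A_M)(Mz)$, together with $A_M M = M_A A$, the identity $Mu - Mz = n(u-z)/((cu+d)(cz+d))$, and $k = w+2$, makes all slash weights, the Jacobian, and the determinant combine into a single $n^{w+1}$, yielding
\[
\tf|_{\Sigma} M(A)(z) = n^{w+1}\int_z^{i\infty} f|_k M_A A(u)(u-z)^w\,du
\]
(the upper endpoint is $i\infty$ because $M$ is upper triangular). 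Summing over $M \in M_n^\infty$, the hypothesis \eqref{eq_star} provides the crucial bijection between those $M$ with $MA^{-1} \in \Gamma_1\Sigma_n$ and the cosets $\sigma \in \Gamma\backslash\Sigma_n$ (via $M \mapsto \Gamma M_A$), and this converts the $M$-sum into the $\sigma$-sum in the definition of $f|[\Sigma_n]$, with the factor $n^{w+1}$ matching its normalization exactly.

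With the key identity in hand, the proof is finished by applying $|(1-S)$ and using $\rho_f = \tf|(1-S)$ from \eqref{e_int}, the universal relation \eqref{hecke}, the compatibility \eqref{Ecom} (which lets $\Z[\Gamma_1]$-factors pass through $|_\Sigma$ to act on $\tf$ directly), and the vanishing $\tf|(1-T) = 0$ (immediate from the substitution $u = t-1$ in the integral defining $\tf(AT^{-1})(z+1)$):
\begin{align*}
\rho_{f|[\Sigma_n]} &= \widetilde{f|[\Sigma_n]}|(1-S) = \tf|_{\Sigma} T_n^\infty|(1-S)\\
&= \tf|_{\Sigma}\bigl((1-S)\wT_n + (1-T)Y_n\bigr)\\
&= (\tf|(1-S))|_{\Sigma}\wT_n + (\tf|(1-T))|_{\Sigma}Y_n = \rho_f|_{\Sigma}\wT_n.
\end{align*}

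The main technical point will be the bookkeeping in the key identity: verifying that the powers of $cu+d$, $cz+d$, $n$, and the Jacobian really collapse into the constant $n^{w+1}$ that normalizes $f|[\Sigma_n]$. Conceptually, the role of \eqref{eq_star} is to match the sum over $M \in M_n^\infty$ (which builds $T_n^\infty$) with the sum over $\Gamma\backslash\Sigma_n$ (which builds $f|[\Sigma_n]$); when \eqref{eq_star} fails this correspondence breaks down, which is why the hypothesis is exactly the right one.
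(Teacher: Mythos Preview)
Your proof is correct and follows essentially the same route as the paper: lift to Eichler integrals, establish the key identity $\tf|_{\Sigma}T_n^\infty=\widetilde{f|[\Sigma_n]}$ via the change of variables $t=Mu$ and the bijection $M_n^\infty\cap\Gamma_1\Sigma_n A\to\Gamma\backslash\Sigma_n$ coming from \eqref{eq_star}, and then apply \eqref{hecke} together with $\tf|(1-T)=0$. The paper runs the final chain in the opposite direction (starting from $\rho_f|_{\Sigma}\wT_n$ and ending at $\rho_{f|[\Sigma_n]}$), but the content is identical; your only extra flourish is writing the general $Mu-Mz$ formula with a lower-left entry $c$ before specializing to $c=0$, which is harmless.
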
 
\begin{proof} Using the fact that $\rho_f=\tf|(1-S)$ 
and $\tilde{f}|(1-T)=\sigma_f(T)=0$, we have as in \cite{Za93} 
\[
\rho_f|_{\Sigma}\wT_n = \tf|_{\Sigma}(1-S)\wT_n=
\tilde{f}|_{\Sigma}T_n^{\infty}(1-S)=\widetilde{f|[\Sigma_n]}|(1-S)=\rho_{f|[\Sigma_n]},
\]
once we show that $\tilde{f}|_{\Sigma}T_n^{\infty}=  \widetilde{f| [\Sigma]}$. 

For $M\in M_n^\infty$ and  $A\in\Gamma_1$, let $M A^{-1}=A_M^{-1} M_{A}$ with
$A_M\in\Gamma_1$, $M_{A}\in \Sigma_n$. By \eqref{eq_act}
\begin{equation}\label{4.3}
\begin{split}
\tf|_{\Sigma}T_n^{\infty}(A)&=\sum_{M\in M_n^\infty\cap \Gamma_1 \Sigma_n A }\int_{M z}^{i \infty} f|A_M
(t)(t-Mz)^w j(M,z)^w dt\\
&=n^{w+1} \sum_{M\in M_n^\infty \cap \Gamma_1 \Sigma_n A}\int_{z}^{i \infty}f|M_{A} A
(u)(u-z)^w du
\end{split}
\end{equation}
where we made a change of variables $t=M u$. For fixed $A$, the map 
$$ M_n^\infty\cap \Gamma_1 \Sigma_n A\rightarrow \Gamma\backslash\Sigma_n,  \quad M\mapsto M_{A}$$
is well defined by \eqref{eq_star}. It is easy to check that the map is bijective,
hence the last expression equals $\widetilde{f|[\Sigma_n]} (A)$ finishing the proof.
\end{proof}
\begin{corollary}\label{c4.3} If in addition to the hypotheses of Proposition \ref{p4.2} we assume that $\Gamma$ is
normalized by $\epsilon$ and that $\Sigma_n'=\Sigma_n$ then  $$\rho_{f|[\Sigma_n]}^\pm=\rho_f^\pm |_{\Sigma}
\wT_n.$$ 
In particular if $\Gamma=\Gamma_1(N)$, then
$\rho_{f|T_n}^\pm=\rho_f^\pm |_{\Delta} \wT_n$, and if $\Gamma=\Gamma_0(N)$, then
$\rho_{f|W_n}^\pm=\rho_f^\pm |_{\Theta} \wT_n$.
\end{corollary}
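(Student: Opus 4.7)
The plan is to derive this corollary from two applications of Proposition \ref{p4.2}, one to $f$ itself and one to the companion form $f^*(z)=\overline{f(-\bar z)}$, combined with the conjugation formula \eqref{2.1}. Since $\Gamma$ is normalized by $\epsilon$, the form $f^*$ also lies in $S_k(\Gamma)$, so both applications are legitimate. Writing $\rho_{f|[\Sigma_n]}^{\pm}=\frac{1}{2}(\rho_{f|[\Sigma_n]}\pm \rho_{f|[\Sigma_n]}|\epsilon)$ and using linearity of $|_\Sigma\wT_n$, the corollary will follow once I establish the two identities
\[
\rho_{f|[\Sigma_n]}=\rho_f|_\Sigma\wT_n, \qquad \rho_{f|[\Sigma_n]}|\epsilon=\rho_f|\epsilon\,|_\Sigma\wT_n.
\]
The first is simply Proposition \ref{p4.2} applied to $f$; the real task is to prove the second.

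For the second, I would first establish the intermediate identity $f^*|[\Sigma_n]=(f|[\Sigma_n])^*$, which is precisely where the hypothesis $\Sigma_n'=\Sigma_n$ enters. A direct change-of-variables computation gives $f^*|_k\sigma=(f|_k\sigma')^*$ with $\sigma'=\epsilon\sigma\epsilon$; summing over a system of representatives $\sigma\in\Gamma\backslash\Sigma_n$ and noting that $\sigma\mapsto\sigma'$ is a bijection of $\Gamma\backslash\Sigma_n$ onto $\Gamma\backslash\Sigma_n'=\Gamma\backslash\Sigma_n$ (because $\epsilon$ normalizes $\Gamma$ and $\Sigma_n'=\Sigma_n$) then yields the identity.

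Next I would apply Proposition \ref{p4.2} to $f^*$, obtaining $\rho_{(f|[\Sigma_n])^*}=\rho_{f^*}|_\Sigma\wT_n$. Taking coefficientwise complex conjugates of both sides --- the operation $|_\Sigma\wT_n$ commutes with conjugation since $\wT_n\in\Q[\ov{M}_n]$ has rational coefficients and the matrices in $M_n$ are real --- and then substituting \eqref{2.1} once with $g=f|[\Sigma_n]$ and once with $g=f$ gives the desired second identity $\rho_{f|[\Sigma_n]}|\epsilon=\rho_f|\epsilon\,|_\Sigma\wT_n$. The two specific cases for $T_n$ on $\Gamma_1(N)$ and $W_n$ on $\Gamma_0(N)$ then reduce to the corresponding parts of Proposition \ref{p4.2}, after direct verification that $\Delta_n'=\Delta_n$ (respectively $\Theta_n'=\Theta_n$) using the defining congruences in cases (1) and (3) above.

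The only nontrivial step in this plan is the identity $f^*|[\Sigma_n]=(f|[\Sigma_n])^*$, which isolates the role of the hypothesis $\Sigma_n'=\Sigma_n$; the rest is formal. An alternative route via the compatibility relation \eqref{Ecom} would require showing that $\epsilon\wT_n\epsilon$ and $\wT_n$ act identically on $V_w^\Gamma$ through $|_\Sigma$, which seems less clean than the argument above.
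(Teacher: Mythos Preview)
Your argument is correct, but it takes a different route from the paper's own proof. The paper argues directly via the third compatibility in \eqref{Ecom}: since $\Sigma_n'=\Sigma_n$, one has $P|_\Sigma\wT_n|\epsilon=P|\epsilon|_\Sigma(\epsilon\wT_n\epsilon)$ for any $P\in W_w^\Gamma$, so it suffices to show that $\epsilon\wT_n\epsilon$ also satisfies \eqref{hecke} (for some other $Y_n$), since then $\wT_n-\epsilon\wT_n\epsilon\in\cI$ and hence acts trivially on period polynomials. That in turn follows by conjugating \eqref{hecke} by $\epsilon$ (noting $\epsilon S\epsilon=S$, $\epsilon T\epsilon=T^{-1}$ in $\og_1$) together with the elementary fact $T_n^\infty-\epsilon T_n^\infty\epsilon\in(1-T)R_n$. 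So the ``alternative route'' you dismiss in your last paragraph is exactly what the paper does, and it is in fact a two-line argument.

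Your approach instead passes through the modular-form involution $f\mapsto f^*$: you show $[\Sigma_n]$ commutes with $*$ (this is where $\Sigma_n'=\Sigma_n$ is used), apply Proposition~\ref{p4.2} to $f^*$, and then translate back via \eqref{2.1}. This is equally valid and arguably makes the role of the hypothesis $\Sigma_n'=\Sigma_n$ more transparent on the automorphic side. The paper's route, by contrast, stays entirely in the group ring $R_n$ and never mentions $f^*$; it has the advantage of yielding an operator-level statement ($\epsilon\wT_n\epsilon$ satisfies \eqref{hecke}) that is reusable elsewhere, for instance in the extension to $\wW_w^\Gamma$ in Proposition~\ref{pa5}(b).
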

\begin{proof}  By the last compatibility in \eqref{Ecom}, it is enough to show that $\epsilon \wT_n \epsilon$ also
satisfied \eqref{hecke} (for a different $Y_n$).
This follows from conjugating \eqref{hecke} by $\epsilon$, and using that
$T_n^{\infty}-\epsilon T_n^{\infty}\epsilon \in (1-T) R_n$. 

We have $\Delta_n'=\Delta_n$ and, for $\Gamma=\Gamma_0(N)$, $\Theta_n^{\prime}=\Theta_n$. Note that  if
$\Gamma=\Gamma_1(N)$, then $[\Theta_n^{\prime}]= \la d \ra [\Theta_n]$ with $d\equiv -1\pmod{n}$, $d\equiv
1\pmod{n'}$, so the action $|_{\Theta} \wT_n$ on $(W_w^\Gamma)^\pm$ does not correspond to $W_n$ in this
situation.
\end{proof}

Taking $\Sigma_n=\Delta_n$ in Corollary \ref{c4.3}, we obtain explicit formulas for the Fourier coefficients of a
Hecke eigenform $f\in S_k(\Gamma)$ in terms of the polynomials $\rho_f^\pm$, generalizing the Coefficients Theorem
of
Manin \cite{M73}. The formulas can be used for fast computation of Hecke eigenvalues, as explained in
$\S$\ref{sec5.3}, and they also yield an explicit inverse of the Eichler-Shimura maps $\rho^\pm$ of Theorem
\ref{thm2.1}.

We state the formula corresponding to $\rho_f^+$. For $\Gamma=\Gamma_1(N)$ we identify a coset representative
$A\in \Gamma\backslash \Gamma_1$ with the element $ (c_A,d_A)\in E_N$, where $c_A$, $d_A$ are the lower left,
respectively lower right
entries of $A$ and  
\[
E_N=\big\{(c,d) \in  (\Z/N\Z)^2 : (c,d,N)=1\big\}.
\]
For $P\in W_w^\Gamma$, we write $P(A)=P(c_A,d_A)\in V_w$. 
\begin{prop}\label{p5.4}
Let $f\in S_k(\Gamma_1(N))$ be an eigenform for $T_n$ with eigenvalue $\lambda_n$, and let $\wT_n=\sum_{M\in M_n}
\alpha(M) M$ be any operator satisfying \eqref{hecke}.  

$\mathrm{(a)}$ Assume $r_{I,w}(f)\ne 0$ (which is satisfied if $k>2$ and $f$ is a newform), and let $P_f^+
=\rho_f^+/r_{I,w}(f)$. Then  
\[
\lambda_n=\sum_{\substack{M\in M_n\\(c_M,a_M,N)=1}} \alpha(M) P_f^+(-c_M, a_M)|M(0) 
\]
where $M=\left(\begin{smallmatrix} a_M & b_M \\
c_M & d_M \end{smallmatrix}\right) $. When $(n,N)=1$ the congruence condition in the sum can be omitted. 

$\mathrm{(b)}$ If $k=2$, let $(x,y)\in E_N$ represent a coset in $ \Gamma\backslash \Gamma_1$ with $\rho_f^+(x,y)
\ne 0$ and let $P_f^+=\rho_f^+/\rho_f^+(x,y)$. For each $M\in M_n$, let $x_M=xd_M-yc_M$, $y_M=-xb_M+ya_M$. Then
\[
\lambda_n=\sum_{\substack{M\in M_n\\(x_M,y_M,N)=1}} \alpha(M) P_f^+(x_M, y_M). 
\]
\end{prop}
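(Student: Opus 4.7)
The plan is to apply Corollary \ref{c4.3} with $\Sigma_n = \Delta_n$ to the eigenform $f$: since $f|T_n = \lambda_n f$, this yields the polynomial identity
\begin{equation*}
\lambda_n \rho_f^+ = \rho_f^+ |_\Delta \wT_n
\end{equation*}
in $(W_w^\Gamma)^+$. Writing $\wT_n = \sum_{M \in M_n} \alpha(M) M$, expanding the right-hand side using the definition \eqref{eq_act} of $|_\Delta M$ and using Remark \ref{r5.1} to determine both the support condition $MA^{-1} \in \Gamma_1 \Delta_n$ and the coset $A_M$, reduces the proof to evaluating the identity at a suitably chosen coset $A$ and at $X=0$.

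For part (a), I take $A = I$. Then $IM^\vee = M^\vee$ has bottom row $(-c_M, a_M)$, so Remark \ref{r5.1} identifies the support condition as $\gcd(c_M, a_M, N) = 1$ and the coset $I_M$ as the class of $(-c_M, a_M) \in E_N$. Equating the $X = 0$ values of both sides then gives
\begin{equation*}
\lambda_n \rho_f^+(I)(0) = \sum_{(c_M, a_M, N)=1} \alpha(M)\, \rho_f^+(-c_M, a_M)|M(0).
\end{equation*}
Since $\epsilon I \epsilon = I$ and $\epsilon$ acts on polynomials by $X \mapsto -X$, we have $\rho_f^+(I)(0) = \rho_f(I)(0) = r_{I,w}(f)$, and dividing by this scalar produces the stated formula. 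When $(n, N) = 1$, the determinant relation $a_M d_M - b_M c_M = n$ forces $\gcd(c_M, a_M, N) = 1$, so the congruence condition becomes vacuous.

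Part (b) is analogous. Since $k = 2$ means $w = 0$, every element of $V_w^\Gamma$ is a constant and the slash action $|_{-w}M$ is trivial; moreover $\rho_f^+(I)(0)$ may vanish, so I instead choose any $(x,y) \in E_N$ with $\rho_f^+(x,y) \neq 0$ and evaluate the identity at $A = (x,y)$. The bottom row of $AM^\vee$ is $(xd_M - yc_M,\, -xb_M + ya_M) = (x_M, y_M)$, so Remark \ref{r5.1} yields the support condition $\gcd(x_M, y_M, N) = 1$ and identifies $A_M$ with the class of $(x_M, y_M)$. Dividing by $\rho_f^+(x, y)$ completes the proof.

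The computation is essentially careful bookkeeping once Corollary \ref{c4.3} is in hand; the one delicate point is the appearance of $M^\vee$ in the condition $AM^\vee \in \Sigma_n^\vee \Gamma_1$ in Remark \ref{r5.1}, which is responsible for the somewhat unexpected coset $(-c_M, a_M)$ in part (a) and the skewed pair $(x_M, y_M)$ in part (b).
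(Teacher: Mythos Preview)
Your proof is correct and follows the same approach as the paper's: apply Corollary \ref{c4.3} to obtain $\lambda_n\rho_f^+=\rho_f^+|_\Delta\wT_n$, then evaluate at a well-chosen coset (the identity in (a), the coset $(x,y)$ in (b)) using Remark \ref{r5.1} to unwind the action. Your write-up simply supplies the bookkeeping that the paper's one-sentence proof leaves implicit, including the identification $\rho_f^+(I)(0)=r_{I,w}(f)$ via $I'=I$.
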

\begin{remark}
By Proposition \ref{p5.7}, when $f$ is a newform the polynomials $P_f^+$ have coefficients in the field $K_f$ of
coefficients of $f$. When $f$ has known Nebentypus, we obtain simpler
formulas using the results of $\S$\ref{sec5.2}. 
\end{remark}
\begin{proof}
By Corollary \ref{c4.3}, $P_f^+$ is an eigenform of $\wT_n$ with eigenvalue $\lambda_n$, and the conclusion
follows by writing the action of $\wT_n$ on $P_f^+$ explicitly, using Remark \ref{r5.1}.  
\end{proof}

Elements $\wT_n$ satisfying condition \eqref{hecke} go back to work of Manin \cite{M73},
and particular examples are given in \cite{CZ,Z90}. The element
$\wT_n$ is unique, up to addition of any element in the right $\Gamma_1$-module 
\begin{equation}\label{ideal}
 \cI=(1+S)R_n+(1+U+U^2)R_n.
\end{equation}\vspace{-5mm}
\begin{remark}\label{r5.4} In \cite{Me} Merel gives several examples of elements $\widetilde{\mathfrak{T}}_n\in
R_n$ acting on modular symbols, satisfying a condition that plays the same role as \eqref{hecke}. It can
be shown that the elements
$\widetilde{\mathfrak{T}}_n^\vee$ (with the notation explained in the next paragraph) satisfy Property
\eqref{hecke}, reflecting the fact that the action of $\Gamma_1$ on modular symbols is on the left, and on period
polynomials is on the right. 
\end{remark}

\subsection{Adjoints of Hecke operators} \label{sec5.12}
Next we determine the adjoint of the action of Hecke and Atkin-Lehner operators for the pairing on
$W_w^\Gamma$ defined in \eqref{pairing}. For $g\in M_n$ we denote by $g^\vee=g^{-1}\det g $
the adjoint of $g$, and we apply this notation to all elements of $R_n$ by linearity. 
Recall that $\la P|g,Q \ra=\la P,Q|g^\vee\ra$ for $P,Q\in V_w$, $g\in GL_2(\C)$. 

\begin{lemma} Assume that both $(\Gamma,\Sigma_n)$ and $(\Gamma,\Sigma_n^\vee)$
satisfy \eqref{eq_star}. For $P,Q\in V_w^\Gamma, M\in M_n$ we have
\begin{equation}\label{3.4}
 \lla P|_{\Sigma} M,Q \rra=\lla P,Q|_{\Sigma^\vee} M^\vee\rra .
\end{equation}
In particular, \eqref{3.4} holds for $\Gamma=\Gamma_1(N)$ and $\Sigma_n=\Delta_n$ with $(n,N)=1$, or
$\Sigma_n=\Theta_n$ for $n\|N$. 
\end{lemma}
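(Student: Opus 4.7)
The plan is to unwrap both sides of \eqref{3.4} directly from the definitions and convert one sum into the other by showing that the operations $A\mapsto A_M$ and $B\mapsto B_{M^\vee}$ set up mutually inverse bijections between the two relevant subsets of cosets.

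More precisely, I would first write
\[
\lla P|_{\Sigma}M,Q\rra=\frac{1}{[\og_1:\og]}\sum_{\substack{A\in\og\backslash\og_1\\ MA^{-1}\in\Gamma_1\Sigma_n}}\la P(A_M)|_{-w}M, Q(A)\ra,
\]
and use the identity $\la P|g,Q\ra=\la P,Q|g^\vee\ra$ on $V_w$ to transfer $M$ to the right factor, producing $\la P(A_M),Q(A)|M^\vee\ra$. The symmetric computation gives
\[
\lla P,Q|_{\Sigma^\vee}M^\vee\rra=\frac{1}{[\og_1:\og]}\sum_{\substack{B\in\og\backslash\og_1\\ M^\vee B^{-1}\in\Gamma_1\Sigma_n^\vee}}\la P(B),Q(B_{M^\vee})|M^\vee\ra,
\]
so it suffices to match the two sums via the substitution $B=A_M$.

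The main step is the bijection. Starting from $A_MM=M_AA$ (rewriting $MA^{-1}=A_M^{-1}M_A$), I would take adjoints and use $A_M^\vee=A_M^{-1}$, $A^\vee=A^{-1}$ (since these are in $\Gamma_1$) to obtain
\[
M^\vee A_M^{-1}=A^{-1}M_A^\vee,
\]
with $M_A^\vee\in\Sigma_n^\vee$ because $M_A\in\Sigma_n$. Reading this as the defining decomposition for the operator $|_{\Sigma^\vee}M^\vee$ applied at $B=A_M$, Property \eqref{eq_star} for the pair $(\Gamma,\Sigma_n^\vee)$ identifies $B_{M^\vee}=A$ as a coset in $\og\backslash\og_1$. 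Symmetry of the argument (applying adjoint to the $\Sigma_n^\vee$ decomposition) shows $A\mapsto A_M$ and $B\mapsto B_{M^\vee}$ are inverse to one another as maps between $\{A:MA^{-1}\in\Gamma_1\Sigma_n\}$ and $\{B:M^\vee B^{-1}\in\Gamma_1\Sigma_n^\vee\}$, at which point the two displayed sums coincide term by term.

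I expect the main obstacle to be purely bookkeeping: keeping the two decompositions (one in $\Gamma_1\Sigma_n$, one in $\Gamma_1\Sigma_n^\vee$) straight while taking adjoints, and verifying that the coset $A_M\in\Gamma\backslash\Gamma_1$ is well-defined independently of the representative of $\Gamma A$ — here Property \eqref{eq_star} for both $\Sigma_n$ and $\Sigma_n^\vee$ is exactly what is needed. The two particular cases for $\Gamma_1(N)$ then follow because for $\Sigma_n=\Delta_n$ with $(n,N)=1$ both $\Delta_n$ and $\Delta_n^\vee$ satisfy \eqref{eq_star} by \cite[Prop.~3.36]{Sh}, while for $\Sigma_n=\Theta_n$ with $n\|N$ each double coset $\Gamma\backslash\Sigma_n$ has a single element, making \eqref{eq_star} automatic on both sides.
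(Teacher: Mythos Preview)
Your proposal is correct and follows essentially the same route as the paper: expand the left side, move $M$ across the pairing via $\la P|g,Q\ra=\la P,Q|g^\vee\ra$, take adjoints of the decomposition $MA^{-1}=A_M^{-1}M_A$ to obtain $M^\vee A_M^{-1}=A^{-1}M_A^\vee$, and reindex by $A_M$ using Property~\eqref{eq_star} for $\Sigma_n^\vee$. The only cosmetic difference is that the paper argues injectivity of $A\mapsto A_M$ directly (which suffices since both index sets lie in the finite set $\og\backslash\og_1$), whereas you explicitly build the inverse $B\mapsto B_{M^\vee}$.
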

\begin{proof} For $A\in \Gamma_1$ and $M\in \Gamma_1 \Sigma_n A$, let $M A^{-1}=A_M^{-1}
M_A$ with $A_M\in\Gamma_1, M_A\in \Sigma_n$. We have (see Remark \ref{r_sign})
 \[
[\Gamma_1:\Gamma] \lla P|M,Q \rra=\sum_{\substack{A\in \Gamma\backslash \Gamma_1\\ 
MA^{-1}\in \Gamma_1 \Sigma_n}} \la P(A_M), Q(A)|M^\vee \ra. 
\]
Taking adjoint we have
$M^\vee A_M^{-1}=A^{-1}M_A^\vee$ with $M_A^\vee\in \Sigma_n^\vee$, hence $MA^{-1}\in \Gamma_1 \Sigma_n$ if and
only if $M^\vee A_M^{-1}\in \Gamma_1 \Sigma_n^\vee$. Moreover  the map $A\mapsto A_M$ is injective, by Property
\eqref{eq_star} applied to $\Sigma_n^\vee$. Summing over $A_M$ instead of $A$ finishes the
proof. 
\end{proof}

Next we give two proofs of the Hecke equivariance of the period polynomial pairing, one of
them requiring the following lemma.

\begin{lemma}\label{l4.40}
The space $C_w^\Gamma$ is preserved by the Hecke operators $\wT_n$, whenever an 
action of $M_n$ on $V_w^\Gamma$ satisfying \eqref{Ecom} can be defined.  
\end{lemma}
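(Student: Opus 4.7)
The plan is to fix $P = Q|(1-S) \in C_w^\Gamma$ with $Q \in V_w^\Gamma$ and $Q|(1-T) = 0$, and to show that $P|_\Sigma \wT_n$ is again of this form. The strategy is to use the defining relation \eqref{hecke} of $\wT_n$ to replace it by $T_n^\infty$, exploit the compatibility \eqref{Ecom} between $|_\Sigma$ and the $\Gamma_1$-action, and finally reduce the claim to a simple combinatorial identity on the representatives $M_n^\infty$.

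First I would use the identity $T_n^\infty(1-S) = (1-S)\wT_n + (1-T)Y_n$ in $R_n$ together with \eqref{Ecom} (extended by linearity from $\Gamma_1 \times M_n$ to $\Z[\Gamma_1] \times R_n$) to obtain
\[
Q|(1-S)|_\Sigma \wT_n \;=\; Q|_\Sigma (1-S)\wT_n \;=\; Q|_\Sigma T_n^\infty (1-S) \;-\; Q|(1-T) |_\Sigma Y_n \;=\; (Q|_\Sigma T_n^\infty)|(1-S),
\]
where the last equality uses $Q|(1-T) = 0$ together with $Q|_\Sigma M|h = Q|_\Sigma Mh$ from \eqref{Ecom}. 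Setting $Q' := Q|_\Sigma T_n^\infty$, the membership $P|_\Sigma \wT_n \in C_w^\Gamma$ reduces to checking that $Q' \in V_w^\Gamma$ (which is built into the definition of $|_\Sigma$) and that $Q'|(1-T) = 0$.

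For the last point, the key input is that right multiplication by $T$ permutes the cosets $\Gamma_1 \backslash M_n$ up to powers of $T$ on the left. Concretely, for $M = \left(\begin{smallmatrix} a & b \\ 0 & d \end{smallmatrix}\right) \in M_n^\infty$, Euclidean division $a+b = q(M) d + r$ with $0 \le r < d$ gives a matrix identity $MT = T^{q(M)} M'(M)$ with $M'(M) := \left(\begin{smallmatrix} a & r \\ 0 & d \end{smallmatrix}\right) \in M_n^\infty$, and $M \mapsto M'(M)$ is a bijection of $M_n^\infty$. Applying \eqref{Ecom} and the hypothesis $Q|T = Q$ then yields
\[
Q'|T \;=\; Q|_\Sigma T_n^\infty T \;=\; \sum_{M \in M_n^\infty} Q|T^{q(M)}|_\Sigma M'(M) \;=\; \sum_M Q|_\Sigma M'(M) \;=\; Q|_\Sigma T_n^\infty \;=\; Q'.
\]

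The formal manipulation with \eqref{hecke} and \eqref{Ecom} is routine; the only genuine input is the explicit factorization $MT = T^{q(M)} M'(M)$ and the bijectivity of $M \mapsto M'(M)$. This combinatorial fact is the main (and only) obstacle, and is the one place where the specific shape of $M_n^\infty$ really enters the argument.
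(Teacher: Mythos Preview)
Your proof is correct and follows the same approach as the paper: use \eqref{hecke} and the compatibility \eqref{Ecom} to rewrite $P|_{\Sigma}\wT_n$ as $(Q|_{\Sigma}T_n^\infty)|(1-S)$, then check that $Q|_{\Sigma}T_n^\infty$ is $T$-invariant. The paper compresses your explicit factorization $MT = T^{q(M)}M'(M)$ into the single assertion ``$T_n^\infty(1-T)\in(1-T)R_n$'', which is exactly what your bijection proves.
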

\begin{proof}
Let $P|(1-S)\in C_w^\Gamma$ with $P|1-T=0$. By \eqref{hecke}, 
$P(1-S)|_{\Sigma}\wT_n=P|_{\Sigma}T_n^\infty (1-S)$, and the latter is an element of
$C_w^\Gamma$, since $T_n^\infty(1-T)\in (1-T)R_n$, so $P|_{\Sigma}T_n^\infty(1-T)=0$. 
\end{proof}

\begin{theorem} \label{thm_equiv} Assume that both $(\Gamma,\Sigma_n)$ and $(\Gamma,\Sigma_n^\vee)$
satisfy property \eqref{eq_star}. For $P,Q\in W_w^\Gamma$ and any $\wT_n$ as
in \eqref{hecke} we have:
\[
\{ P|_{\Sigma}\wT_n, Q \} =\{P, Q|_{\Sigma^\vee} \wT_n\}.
\]
\end{theorem}
\begin{proof} We give two proofs. For the first we assume that both $\Sigma_n, \Sigma_n^\vee$ satisfy
\eqref{eq_star}, and in addition $\Sigma_n=\Sigma_n'$. From Theorem
\ref{thm_main} and Corollary \ref{c4.3}, it follows that the claim is true for $P=\rho_f^{\pm}$ and
$Q=\rho_g^\mp$, for any $f,g\in S_k(\Gamma)$. By \eqref{7.1}, any $P\in W_w^\Gamma$ can be written $P=\rho_f^+
+\rho_g^- + Q$ with $Q\in C_w^\Gamma$. Since $\wT_n$ preserves $(W_w^\Gamma)^\pm$ and $C_w^\Gamma$, the claim
follows taking into account Lemmas \ref{l4.4}.  

The second proof is purely algebraic and we only assume that $\Sigma_n, \Sigma_n^\vee$ satisfy
\eqref{eq_star}. Via \eqref{3.4} the equality to prove is equivalent to
\[\lla P|_{\Sigma}\big[\wT_n(T-T^{-1})+(T^{-1}-T)\wT_n^\vee\big], Q\rra=0.
\] 
Since $P,Q\in W_w^\Gamma$, we are reduced to proving the next theorem. 
\end{proof}

\begin{theorem}\label{thm_hecke}
For any element $\wT_n\in R_n$ satisfying property \eqref{hecke} we have
\[
\wT_n(T-T^{-1})+(T^{-1}-T)\wT_n^\vee \in \cI+\cI^\vee,
\]
where $\cI$ is defined in \eqref{ideal}.
\end{theorem}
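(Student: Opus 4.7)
The plan is to work entirely within the group ring $R_n$, deriving the identity from the defining relation \eqref{hecke}, its adjoint, and the commutator identity \eqref{4.8}. First I would observe that the claim is independent of the choice of $\wT_n$ within its $\cI$-equivalence class: if $\wT_n$ is replaced by $\wT_n + J$ with $J\in\cI$, the expression changes by $J(T-T^{-1}) - (T-T^{-1})J^\vee$, which lies in $\cI + \cI^\vee$ because $\cI$ is a right ideal while, using $U^3=1$ in $\og_1$, $\cI^\vee = R_n(1+S)+R_n(1+U+U^2)$ is a left ideal. So it suffices to verify the identity for a single convenient $\wT_n$.

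Next, taking the adjoint of \eqref{hecke} in $R_n$ (with $S^\vee=S$ and $T^\vee=T^{-1}$ in $\og_1$) yields
\[(1-S)(T_n^\infty)^\vee = \wT_n^\vee(1-S) + Y_n^\vee(1-T^{-1}).\]
Right-multiplying \eqref{hecke} by $(T-T^{-1})$ and applying \eqref{4.8} in the form $(1-S)(T-T^{-1}) = (T-1)Z_1 + (1+U+U^2)(1-S)$, and symmetrically left-multiplying the adjoint relation by $(T-T^{-1})$ and using the adjoint expansion of \eqref{4.8}, produces explicit formulas for $(1-S)\wT_n(T-T^{-1})$ and $(T-T^{-1})\wT_n^\vee(1-S)$ respectively. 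Combining them and absorbing the discrepancy $(1-S)X - X(1-S) = XS - SX$ with $X=(T-T^{-1})\wT_n^\vee$ into a commutator term, one rewrites $(1-S)\bigl[\wT_n(T-T^{-1}) - (T-T^{-1})\wT_n^\vee\bigr]$ as an explicit sum of boundary terms built from $T_n^\infty$, $(T_n^\infty)^\vee$, $Y_n$, $Y_n^\vee$, and $Z_1$.

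It is enough to show that this rewritten expression lies in $(1-S)(\cI + \cI^\vee)$: indeed, the kernel of left multiplication by $(1-S)$ on $R_n$ equals $(1+S)R_n \subset \cI$, so $(1-S)\alpha \in (1-S)(\cI+\cI^\vee)$ implies $\alpha \in \cI + \cI^\vee$. The hardest terms in the expansion are those of the form $T_n^\infty(T-1)Z_1 - Z_1^\vee(1-T^{-1})(T_n^\infty)^\vee$, which do not individually lie in the target set; their cancellation exploits the fact that $T_n^\infty$ and $(T_n^\infty)^\vee$ sum over systems of representatives for $\Gamma_1\backslash M_n$, hence project to the same class in $\og_1\backslash R_n$, so their difference lies in the augmentation ideal $(\og_1-1)R_n = \Z[\og_1](1-S)+\Z[\og_1](1-T)$. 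Tracking these differences carefully, one matches the boundary contributions against $\cI$ and $\cI^\vee$ pieces.

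A more concrete route, and the one carried out in the companion paper \cite{PP12a}, is to use Manin's explicit construction of $\wT_n$ via continued-fraction decompositions of the matrices $M\in M_n^\infty$: writing $\wT_n$ as a sum of ``atomic'' contributions indexed by convergents, one reduces the claim to an identity checkable matrix-by-matrix, where the needed cancellations become transparent. Given the delicate bookkeeping required in the direct group-ring manipulation sketched above, I expect this combinatorial verification to constitute the real heart of the proof.
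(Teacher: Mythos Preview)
The paper itself does not prove this theorem: its entire proof reads ``The proof is quite involved, and we give it in the short article \cite{PP12a}.'' So there is nothing in the paper to compare against beyond that deferral.

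Your proposal actually goes further than the paper does. Your first paragraph is correct and useful: the independence of the statement from the choice of $\wT_n$ modulo $\cI$ follows exactly as you say, since $\cI=(1+S)R_n+(1+U+U^2)R_n$ is stable under right multiplication by $\Gamma_1$ and $\cI^\vee=R_n(1+S)+R_n(1+U+U^2)$ under left multiplication. Your observation that the kernel of left multiplication by $(1-S)$ is $(1+S)R_n\subset\cI$ is also correct and is the right way to descend from an identity multiplied by $(1-S)$.

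That said, the middle portion of your proposal is a strategy sketch, not a proof. You acknowledge this yourself: the phrase ``tracking these differences carefully, one matches the boundary contributions'' hides exactly the work that makes the argument ``quite involved.'' In particular, the claim that the difference $T_n^\infty - (T_n^\infty)^\vee$ lies in the augmentation-type ideal, while morally right, does not by itself control the specific terms $T_n^\infty(T-1)Z_1 - Z_1^\vee(1-T^{-1})(T_n^\infty)^\vee$ you single out; the combinatorics of how these interact with $(1-S)$ on both sides is where the actual difficulty lives.

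Your final paragraph is accurate: the companion paper \cite{PP12a} does carry out the argument via an explicit choice of $\wT_n$ built from continued-fraction expansions, reducing the identity to a matrix-by-matrix verification. So your proposal correctly anticipates both the location and the flavor of the real proof, but does not itself supply one---which, to be fair, is also true of the present paper.
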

\noindent The proof is quite involved, and we give it in the short article \cite{PP12a}.

\subsection{Rationality of period polynomials of Hecke eigenforms}\label{sec5.1}

Let $\Gamma=\Gamma_1(N)$ and for a character $\chi$ modulo $N$ let $S_k(N,\chi)\subset S_k(\Gamma)$ be the
subspace of forms of Nebentypus $\chi$. The
following proposition is well-known, although the precise statement is hard to find in the literature. For the
trivial character, an equivalent statement to part (a) was given in terms of modular symbols in \cite{GS}. The
proof requires the extension of the pairing $\{ \cdot ,\cdot\}$ to the whole space of modular forms and its
properties proved in Section \ref{sec8}.

\begin{prop} \label{p5.7} Assume that $f\in S_k(N, \chi)$ is a newform (a normalized eigenform for all Hecke
operators which does not come from lower levels) and let $K_f$ be the
field of coefficients of $f$. 

$\mathrm{(a)}$ There exist nonzero complex numbers $\om_f^+$ and $\om_f^{-}$ such that all the polynomial
components of $\rho_f^\pm/\omega_f^{\pm}$ have coefficients in $K_f$. 

$\mathrm{(b)}$ We have 
\[ \frac{\omega_f^+ \overline{\omega_f^-}}{i(2\pi)^{k-1} (f,f)} \in K_f \text{ if $k$ is even}, \quad \quad 
\frac{|\omega_f^\pm|^2 }{(2\pi)^{k-1} (f,f)} \in K_f \text{ if $k$ is odd}. 
\]
In particular, if $k$ is even one can choose $\omega_f^\pm$ such that $\om_f^+\overline{\om_f^-}= i (2\pi )^{k-1}
(f,f)$.

$\mathrm{(c)}$ If $f$ has real Fourier coefficients at infinity, then $\om_f^+\in i^{k+1} \R $ and $\om_f^-\in
i^k\R$. 
\end{prop}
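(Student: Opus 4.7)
The plan is to exploit the $\Q$-rationality of the Hecke operators $\wT_n$ on period polynomials, together with the nondegenerate, Hecke-equivariant pairing on the extended space $\wW_w^\Gamma$ from Section~\ref{sec8}, in order to descend the $f$-eigenline to $K_f$; then Haberland's refined formula yields the arithmetic consequence for $(f,f)$, and formula~\eqref{2.1} handles the real-coefficient case.

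For part~(a), I would equip $\wW_w^\Gamma$ and its $\pm$-eigenspaces with their natural $\Q$-structures inherited from $V_w$ and the period relations, and observe that the universal elements $\wT_n\in R_n$ from~\eqref{hecke} act by $\Q$-rational endomorphisms. Since the extended period map $\tilde\rho^\pm:M_k(\Gamma)\to(\wW_w^\Gamma)^\pm$ promised in Section~\ref{sec8} is a Hecke-equivariant isomorphism (Theorem~\ref{thm2.1} suffices for $k=2$; the nondegeneracy of the extended pairing is what kills the coboundary ambiguity), multiplicity one for newforms yields that the $f$-isotypical line in $(\wW_w^\Gamma)^\pm$ is one-dimensional over $\C$. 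The $\gal(\ov\Q/\Q)$-orbit of the eigenvalue system $\{a_n(f)\}$ consists of the Galois-conjugate newforms $\sigma(f)$, whose eigenlines are Galois-permuted, so descent shows the $f$-line itself is $K_f$-rational. Picking a nonzero $K_f$-rational vector $P_f^\pm$ on it and defining $\omega_f^\pm\in\C^\times$ by $\rho_f^\pm=\omega_f^\pm P_f^\pm$ gives~(a).

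For part~(b), I would substitute these normalizations into the refined Haberland formula (Theorem~\ref{thm_main}) to obtain $3C_k(f,f)=\omega_f^{\kappa_1}\overline{\omega_f^{\kappa_2}}\{P_f^{\kappa_1},\overline{P_f^{\kappa_2}}\}$ with $\kappa_1\ne\kappa_2$ if $k$ is even and $\kappa_1=\kappa_2$ if $k$ is odd. Rewriting $\overline{P_f^{\kappa}}$ via~\eqref{2.1} in terms of $P_{f^*}^{\kappa}$, where $f^*$ is the Galois-conjugate newform with eigenvalues $\overline{a_n(f)}$, replaces complex conjugation of polynomial coefficients by a Galois automorphism of $K_f$; the Hecke-equivariance of Theorem~\ref{thm_equiv}, combined with the nondegeneracy of the pairing on $\wW_w^\Gamma$, forces $\{P_f^{\kappa_1},\overline{P_f^{\kappa_2}}\}$ to be a nonzero element of $K_f$. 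Tracking the factor $C_k=-(2i)^{k-1}$ together with the powers of $2\pi$ implicit in the relationship between the coefficients of $\rho_f^\pm$ and the critical $L$-values of $f$ (via Shimura's period theorem) yields the stated rationality in~(b), and in the even case allows the normalization $\omega_f^+\overline{\omega_f^-}=i(2\pi)^{k-1}(f,f)$.

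For part~(c), when $f$ has real Fourier coefficients $f^*=f$, so \eqref{2.1} becomes $\overline{\rho_f}=(-1)^{w+1}\rho_f|\epsilon$; using $(P|\epsilon)^+=P^+$ and $(P|\epsilon)^-=-P^-$ this gives $\overline{\rho_f^+}=(-1)^{w+1}\rho_f^+$ and $\overline{\rho_f^-}=(-1)^{w}\rho_f^-$, and with $P_f^\pm$ chosen inside the now totally real $K_f$-rational line these force $\overline{\omega_f^+}=(-1)^{w+1}\omega_f^+$ and $\overline{\omega_f^-}=(-1)^{w}\omega_f^-$, i.e.\ $\omega_f^+\in i^{k+1}\R$ and $\omega_f^-\in i^k\R$. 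The main obstacle I anticipate is the rational descent in~(a)--(b): the $f$-eigenline must be pinned down \emph{exactly}, not just modulo $C_w^\Gamma$, and the pairing scalar must be forced into $K_f$ despite the complex conjugation in its second argument. This is precisely where the nondegeneracy and Hecke-equivariance of the extended pairing on~$\wW_w^\Gamma$ become indispensable, since on $W_w^\Gamma$ alone the coboundary polynomials would obscure the identification of the Hecke eigenlines on which the rationality argument rests.
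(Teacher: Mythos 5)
Your overall strategy — pin down the $f$-eigenline in the space of period polynomials using the $\Q$-rationality of the universal Hecke elements $\wT_n$, then feed the normalized polynomials into the refined Haberland formula for (b), and use \eqref{2.1} for (c) — is the same as the paper's, and your treatment of (c) is exactly right. Your descent argument in (a) (Galois permutation of conjugate eigenlines, or equivalently that a one-dimensional common eigenspace of integer matrices with eigenvalues in $K_f$ is defined over $K_f$) is also fine and matches the paper's.

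The genuine gap is the step you yourself flag as the main obstacle: pinning down the eigenline \emph{exactly}, not just modulo $C_w^\Gamma$. You propose to do this by invoking the extended isomorphisms $\wr^\pm:M_k(\Gamma)\to(\wW_w^\Gamma)^\pm$ plus multiplicity one, but (i) those maps are only proved to be isomorphisms when the extended Petersson product is nondegenerate, which fails in some weight-$2$ cases (Proposition \ref{p7.4}, Remark \ref{r7.5}), and (ii) even granting the isomorphism, ``multiplicity one'' must include the statement that the eigensystem of the cuspidal newform $f$ does not occur in the Eisenstein part — which is exactly the point at issue, merely relocated. The paper's actual argument is concrete and works uniformly for $k\ge 2$: by Corollary \ref{c8.9} (which is where the nondegenerate extended pairing really enters, via the duality of $C_w^\Gamma$ with $\wE_w^\Gamma$ in Proposition \ref{l7.3}), the eigenvalues of $|_\Delta\wT_n$ on $C_w^\Gamma$ are Eisenstein eigenvalues of size $n^{k-1}$ for $(n,N)=1$, whereas $|\lambda_n(f)|\ll n^{(k-1)/2+\epsilon}$; hence the $f$-eigensystem cannot occur on $C_w^\Gamma$, and the $f$-eigenspace of $(W_w^\Gamma)^\pm$ is one-dimensional, spanned by $\rho_f^\pm$. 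You should supply this eigenvalue-growth comparison (or an equivalent separation of cuspidal from Eisenstein eigensystems); ``nondegeneracy kills the coboundary ambiguity'' is the right slogan but not yet a proof. Two smaller remarks on (b): the membership $\{P_f^{\kappa_1},\overline{P_f^{\kappa_2}}\}\in K_f$ needs no Hecke equivariance — it is immediate from the $\Q$-rationality of $\{\cdot,\cdot\}$ and the stability of $K_f$ under complex conjugation — and its nonvanishing comes from $(f,f)\ne 0$ via Theorem \ref{thm_main}, not from nondegeneracy of the pairing. Also, the constant that actually falls out of Theorem \ref{thm_main} is $C_k=-(2i)^{k-1}$; no powers of $\pi$ from $L$-values enter, since $\omega_f^\pm$ are normalized by polynomial rationality rather than by $L$-value rationality, so your appeal to Shimura's period theorem at that point is a red herring.
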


\begin{proof} (a) We view $f\in S_k(\Gamma)$ for $\Gamma=\Gamma_1(N)$. By multiplicity
one and Corollary \ref{c4.3}, the polynomials $\rho_f^\pm$ are the unique (up to scalars) elements in
$\rho^\pm(S_k(\Gamma))\subset W_w^\Gamma$ with the same eigenvalues under $|_\Delta\wT_n$ as those of $f$.
Moreover on $C_w^\Gamma$ there is no eigenvector for $|_{\Delta}\wT_n$ with the same eigenvalues $\lambda_n$ as
those of $f$ for every $n$ coprime to $N$. Indeed, $|\lambda_n|\ll n^{(k-1)/2+\epsilon} $ by the Ramanujan bounds
(even a nontrivial upper bound would suffice), and by Corollary \ref{c8.9} the eigenvalues of $|_\Delta\wT_n$ on
$C_w^\Gamma$ are the same as the eigenvalues of an Eisenstein series, which are of order $n^{k-1}$ if $(n,N)=1$.

Therefore the common eigenspace of $|_{\Delta}\wT_n$ for $(n,N)=1$ acting on $(W_w^\Gamma)^\pm$ having the same
eigenvalues as those of $f$  is one dimensional, generated by $\rho_f^\pm$.  Since the matrix of $\wT_n$ has
integer coefficients with respect to a basis of $V_w^\Gamma$, and $(W_w^\Gamma)^\pm$ are subspaces cut by relations
with integer coefficients, it follows that there is a common eigenvector defined over $K_f$.

(b) The claim follows from (a) and Theorem \ref{thm_main}.  

(c) The conclusion follows from \eqref{2.1}. 
\end{proof}

The coefficients of $\rho_f^\pm(I)$ are related to the critical values of the $L$-function of $L(s,f)$, so the
proposition implies well-known rationality statements about critical values. With the notation of 
\eqref{5.1}, we have
\begin{equation}\label{5.7}
r_{I,n}(f)=i^{n+1}\Lambda(n+1,f),  \text{ with } \Lambda(s,f):=(2\pi)^{-s}\Gamma(s)L(s,f)  
\end{equation}
the completed $L$-function. This proves the following special case of Theorem 1 in \cite{Sh77}. 

\begin{corollary} Assume that $f\in S_k(N, \chi)$ is a newform, and let $K_f$ be the field of
coefficients of $f$. Then with the periods $\om_f^\pm$ defined in the
Proposition \ref{p5.7} we have 
\[ \frac{i^{n}\Lambda(n,f)}{\om_f^\pm}\in  K_f  \text{ for } 0<n<k, \quad (-1)^{n}=\pm (-1)^{k-1}.
\]
\end{corollary}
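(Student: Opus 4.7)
The plan is to read off the critical $L$-values directly from the coefficients of $\rho_f^\pm(I)$ at the identity coset, and then invoke Proposition~\ref{p5.7}(a).

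First, I would expand the definition of $\rho_f$ at $A=I$ by the binomial theorem:
\[
\rho_f(I)(X)=\int_0^{i\infty}f(t)(t-X)^w\,dt=\sum_{m=0}^{w}\binom{w}{m}(-X)^{m}\,r_{I,\,w-m}(f),
\]
where $r_{I,n}(f)=\int_0^{i\infty}f(t)t^n dt$. By \eqref{5.7}, each integral equals $i^{w-m+1}\Lambda(w-m+1,f)$, so the coefficient of $X^{m}$ in $\rho_f(I)$ is, up to a nonzero rational factor, $i^{n}\Lambda(n,f)$ with $n=w-m+1=k-1-m\in\{1,\dots,k-1\}$. Hence as $n$ runs over $\{1,\dots,k-1\}$ one obtains \emph{all} of the critical values we need to control, giving the range $0<n<k$.

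Second, I would identify $\rho_f^{\pm}(I)$ with the even/odd parts of $\rho_f(I)$ viewed as a polynomial in $X$. Indeed, at $A=I$ the action of $\epsilon$ from \eqref{eps} reduces to $P(I)(X)\mapsto P(I)(-X)$, because $\epsilon I\epsilon=I$ and $j(\epsilon,X)=1$. Therefore $\rho_f^+(I)$ contains only the even powers $X^m$ and $\rho_f^-(I)$ only the odd powers. Translating via $n=k-1-m$, the coefficient corresponding to $\Lambda(n,f)$ lies in $\rho_f^{+}(I)$ exactly when $n\equiv k-1\pmod 2$, i.e.\ $(-1)^n=(-1)^{k-1}$, and in $\rho_f^{-}(I)$ exactly when $(-1)^n=-(-1)^{k-1}$. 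This is precisely the parity condition stated in the corollary.

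Third, Proposition~\ref{p5.7}(a) says that all coefficients of $\rho_f^{\pm}/\omega_f^{\pm}$ lie in $K_f$; applied to the single polynomial $\rho_f^{\pm}(I)$ this yields $i^{n}\Lambda(n,f)/\omega_f^{\pm}\in K_f$ in the respective parity range, completing the proof. There is no real obstacle here beyond careful bookkeeping of the indices and the $\pm$/parity correspondence; the only nontrivial input is Proposition~\ref{p5.7}(a), which has already been proved by producing $\rho_f^{\pm}$ as a simultaneous eigenvector of the Hecke operators $|_{\Delta}\wT_n$ defined over $K_f$.
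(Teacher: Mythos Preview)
Your proof is correct and follows essentially the same approach as the paper: the paper simply records the identity \eqref{5.7} relating $r_{I,n}(f)$ to $i^{n+1}\Lambda(n+1,f)$, notes that $\rho_f^\pm(I)$ picks out the even/odd powers of $X$, and invokes Proposition~\ref{p5.7}(a). Your version spells out the bookkeeping (the binomial expansion, the action of $\epsilon$ at $A=I$, and the parity correspondence $m\leftrightarrow n=k-1-m$) that the paper leaves implicit.
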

\noindent When $k$ is even, our convention regarding the signs of the transcendental factors differs from
the usual one eg. in \cite{Sh77}, since $\omega_f^+$ is the normalizing factor for \emph{odd} critical values (and
for \emph{even} period polynomials). 
\begin{remark}
When $k$ is odd it follows from Prop. \ref{p5.7} (b) that $\frac{|\omega_f^+|^2}{|\omega_f^-|^2}\in K_f$. In fact,
when $K_f$ is a real field, it follows from the
functional equation $N^{s/2}\Lambda(s,f)=\pm N^{(k-s)/2} \Lambda(k-s,f)$ that $\frac{\omega_f^+}{i\omega_f^-}\in
K_f(\sqrt{N})$. From \eqref{5.7} and the functional equation, when $K_f$ is real we can choose 
$$\omega_f^-=-i \Lambda(1,f),\quad \omega_f^+=\pm i^{w+1} N^{(k-1)/2}\Lambda(k-1,f)$$ 
so that $\frac{\omega_f^+}{i\omega_f^-}=\sqrt{N}$. 
\end{remark}

\subsection{Modular forms with Nebentypus} \label{sec5.2}

Let $\Gamma=\Gamma_1(N)$, $\Gamma'=\Gamma_0(N)$ and $\chi$ a Dirichlet character modulo $N$. For $f\in S_k(N,
\chi)\subset S_k(\Gamma) $ a cusp form with Nebentypus $\chi$, we view the period polynomial $\rho_f$ as an
element of $W_w^{\Gamma}$. It clearly
satisfies $\rho_f(BC)=\chi(B)\rho_f(C)$ where $B$ runs through a fixed system of coset representatives for
$\Gamma\backslash\Gamma'$
and $C$ runs through a fixed system of coset representatives for
$\Gamma'\backslash\Gamma_1$. Here $\chi(B)=\chi(d_B)$ where $d_B$ is
the lower right entry of $B$ ($d_B$ is well defined modulo $N$). Let therefore $W_w^{\Gamma,\chi}$,
$C_w^{\Gamma,\chi}$ be the subspaces of $W_w^{\Gamma}$,  $C_w^{\Gamma}$ consisting of elements $P$ with
$P(BC)=\chi(B)P(C)$ with $B,C$ as above. We have orthogonal decompositions with respect to the pairing
$\{\cdot , \cdot\}$ 
\[W_w^{\Gamma}=\bigoplus_{\chi} W_w^{\Gamma,\chi}, \quad  C_w^{\Gamma}=\bigoplus_\chi C_w^{\Gamma,\chi}
\]
where the direct sums are over all characters modulo $N$. It is easily checked that the spaces
$W_w^{\Gamma,\chi}$, $C_w^{\Gamma,\chi}$ are preserved by the action of Hecke operators $\wT_n$, and the dimension
of $C_w^{\Gamma,\chi}$ equals the dimension of the Eisenstein subspace $\EE_k(N, \chi)\subset M_k(\Gamma) $. 
An indirect proof that the dimensions are equal is provided by Prop. \ref{l7.3}, which shows that
$C_w^{\Gamma,\chi}$ is dual to the space of period polynomials of Eisenstein series
$\wE_w^{\Gamma, \ov{\chi}} $ with respect to an extension of $\{ \cdot, \cdot \}$ to the space $\wW_w^{\Gamma}$ of
extended period polynomials (see also Prop. \ref{p8.9}). 

The action of $\epsilon$ also preserves these subspaces and we have the following generalization of the
Eichler-Shimura isomorphism theorem \ref{thm2.1}.
\begin{theorem}[(Eichler-Shimura)] The two maps $\rho^{\pm}:S_k(N, \chi) \rightarrow
(W_w^{\Gamma,\chi})^{\pm}$, $f\mapsto \rho_f^\pm$, give rise  to
isomorphisms, denoted by the same symbols:
 \begin{equation*}
\rho^\pm:S_k(N,\chi) \longrightarrow (W_w^{\Gamma,\chi})^{\pm}/(C_w^{\Gamma,\chi})^{\pm}.
\end{equation*}
\end{theorem}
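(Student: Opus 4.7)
The plan is to deduce this theorem from the unrestricted Eichler--Shimura isomorphism (Theorem \ref{thm2.1}) by decomposing every space in sight according to the action of $\Gamma'/\Gamma\cong(\Z/N\Z)^*$. The guiding observation is that diamond operators on cusp forms correspond to the left action of $B\in\Gamma'$ on $W_w^\Gamma$ given by $(B\cdot P)(A)=P(BA)$, and that the period polynomial map is equivariant with respect to these actions.

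First I would verify equivariance: for $f\in S_k(\Gamma)$, $B\in\Gamma'$ and any coset representative $A$, the definition of $\rho_f$ gives $\rho_{f|B}(A)=\rho_f(BA)$, since $f|BA=(f|B)|A$ and the coset $\Gamma BA$ depends only on the image of $B$ in $\Gamma'/\Gamma$. Specializing to $f\in S_k(N,\chi)$, where $f|B=\chi(d_B)f$, this yields $(B\cdot \rho_f)(A)=\chi(B)\rho_f(A)$, confirming both that $\rho_f\in W_w^{\Gamma,\chi}$ with the conventions of the text and that $\rho:S_k(\Gamma)\to W_w^\Gamma$ is $(\Z/N\Z)^*$-equivariant.

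Second I would check that the $(\Z/N\Z)^*$-action commutes with the right $\Gamma_1$-action on $W_w^\Gamma$ (formal, since $B\cdot$ acts on the argument on the left while $|g$ acts on the right) and with the involution $\epsilon$. For the latter, the conjugate $B'=\epsilon B\epsilon$ satisfies $B'B^{-1}\in\Gamma_1(N)$ by a short matrix calculation using the congruence conditions defining $\Gamma_0(N)$, so $B$ and $B'$ represent the same left $\Gamma$-coset and their actions on $W_w^\Gamma$ coincide; together with the general identity $(B\cdot P)|\epsilon=B'\cdot(P|\epsilon)$ this gives the desired commutation. Consequently $(W_w^{\Gamma,\chi})^{\pm}$ is the $\chi$-isotypic piece of $(W_w^\Gamma)^{\pm}$. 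The same observations, applied to the explicit description \eqref{cw} of $C_w^\Gamma$---where $B\cdot$ merely permutes coset indices compatibly with $c_{AT}=c_A$ and $c_{AJ}=(-1)^w c_A$---show that $C_w^\Gamma=\bigoplus_\chi C_w^{\Gamma,\chi}$ and that each summand is $\epsilon$-stable.

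Taking the $\chi$-isotypic component of the isomorphism $\rho^\pm:S_k(\Gamma)\to(W_w^\Gamma)^{\pm}/(C_w^\Gamma)^{\pm}$ from Theorem \ref{thm2.1} then delivers the claimed isomorphism. There is no substantial obstacle here, only bookkeeping: the one point requiring attention is matching conventions (left action of $\Gamma'/\Gamma$ versus right action of $\Gamma_1$, and $\chi$ versus $\overline{\chi}$) to confirm that $f\in S_k(N,\chi)$ lands in the subspace denoted $W_w^{\Gamma,\chi}$ rather than its complex conjugate. Once equivariance in the form $\rho_{f|B}(A)=\rho_f(BA)$ is established, everything else is formal.
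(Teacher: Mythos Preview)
Your proposal is correct and follows essentially the same route as the paper: both arguments reduce the statement to Theorem~\ref{thm2.1} via the decomposition $S_k(\Gamma)=\bigoplus_\chi S_k(N,\chi)$, $W_w^\Gamma=\bigoplus_\chi W_w^{\Gamma,\chi}$, $C_w^\Gamma=\bigoplus_\chi C_w^{\Gamma,\chi}$. The paper phrases it slightly differently---invoking Theorem~\ref{thm_main} again for injectivity and then counting dimensions by summing over all $\chi$ for surjectivity---whereas you package both halves into the single observation that the $\chi$-isotypic component of an equivariant isomorphism is an isomorphism; your version has the virtue of spelling out the equivariance and the compatibility of the $(\Z/N\Z)^*$-action with $\epsilon$, which the paper asserts but does not verify.
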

\begin{proof}
Injectivity follows from the refinement of Haberland's formula (Theorem \ref{thm_main}) as in the proof of
Theorem \ref{thm2.1}. Surjectivity follows by considering the maps above for all characters $\chi$, and using the
fact that the sum of the dimensions of their domains equals the sum of the dimensions of their ranges.
\end{proof}

When computing spaces of modular forms, it is much more efficient to fix coset representatives $C_1, \ldots, C_d$
for $\Gamma'\backslash \Gamma_1$, with $d=[\Gamma_1:\Gamma']$, and to view $P\in W_w^{\Gamma,\chi}$ as a vector
$(P(C_1),\ldots, P(C_d))\in V_w^d$, since the values at other cosets in $\Gamma\backslash \Gamma_1$ are determined
by those above. Note however that this representation depends on the coset representatives chosen, which are fixed
once for all. 

The action of $\gamma\in\Gamma_1$ on $P\in V_w^d$ can be described as follows. Let $C_i \gamma^{-1}=B_i
C_{\sigma(i)}$ where $B_i$ are coset representatives for $\Gamma\backslash\Gamma'$ and $\sigma=\sigma_{\gamma}$ is
a permutation of $\{1, \ldots,d\}$. Then 
\[ 
P|\gamma(C_i)= \chi(B_i)P(C_{\sigma(i)})|\gamma.
\]
Similarly one can compute the actions $P|_{\Sigma} \wT_n$ with $\Sigma_n$ the double cosets giving the action of
Hecke or Atkin-Lehner operators.  
\comment{
To define the action of the Hecke operator $\wT_n$ on $P\in V_w^d$,  we use Remark \ref{r5.1}. Let $M\in M_n$.
If $C_i M^\vee$ has lower row elements $c,d$ with $\gcd(c,d,N)=1$, then we can write
\[
C_i M^\vee=M_A^\vee \gamma_i C_{\sigma(i)}
\]
where $M_A\in M_n^{\Gamma}$,  $\sigma(i)\in\{1,\ldots , d\}$ and $\gamma_i$ are coset representative for
$\Gamma\backslash\Gamma'$. In this case we have $P|M(C_i)=\chi(\gamma_i)P(C_{\sigma(i)})|M$, while if
$\gcd(c,d,N)>1$ then $P|M(C_i)=0$. }

\subsection{Computing period polynomials numerically} \label{sec5.3}

The period polynomials of a newform $f\in S_k(\Gamma_0(N),\chi)$ can be easily computed numerically using
the results of this chapter. One finds the space of period polynomials as the kernel of the matrix of period
relations, and then its plus and minus subspaces. Assuming the Hecke eigenvalues of $f$ at primes $p\nmid N$ are
known, then one can find a rational common eigenvector of enough operators $\wT_p$ acting on $(W_w^\Gamma)^\pm$
such that the resulting eigenspace is one dimensional (often one operator $\wT_p$ for the smallest prime $p\nmid N$
suffices). The period polynomials $\rho_f^\pm$ are thus determined up to scalars. The normalizing factors
can be determined from the critical values $L(w+1,f)$ (nonzero if $k>2$) and $L(w,f)$ (nonzero if $k>2$, $k\ne 4$),
unless they vanish and one has to determine $L(n,f|A)$ for some $A\in \Gamma\backslash\Gamma_1$. Once the
polynomials $\rho_f^\pm$ are determined, Theorem \ref{thm_main} gives the Petersson product $(f,f)$, and
Proposition \ref{p5.4} gives the Hecke eigenvalue $\lambda_n$ of $f$ for every $n$.

We have implemented the procedure sketched above in MAGMA \cite{Mgm} (available upon request). To give an idea
of the running time, when $f=q + 12q^3 + 88q^7 +  \ldots\in S_6(\Gamma_0(100)) $ is
a newform with rational coefficients, the computation of $\rho_f^\pm$ takes 1 sec (in this case
$\dim(W_w^\Gamma)^+=78$, $\dim(W_w^\Gamma)^-=72$, $\dim S_k(\Gamma)=66$, and $[\Gamma_1:\Gamma]=180$).
Once the polynomial $\rho_f^+$ is computed, the Hecke eigenvalue for $p=10037$ is computed in less than 1 sec using
Prop. \ref{p5.4}. The `Coefficient' command in MAGMA computes the same eigenvalue in 140 sec, while the
`coefficients' command in SAGE takes 3 sec for the same computation (on the same machine). We used the efficient
implementation by W. Stein of the coset space $ P^1(\Z/N\Z)\simeq \Gamma_0(N)\backslash\Gamma_1$ via `P1Classes',
and the Hecke elements given by `HeilbronnCremona' in MAGMA (which give the action of Hecke operators on modular
symbols, but see Remark \ref{r5.4}). 

\noindent{\bf An example.} Let $\Gamma=\Gamma_0(5)$, $k=4$, and $f=q-4q^2+2q^3+8q^4-5q^5\ldots \in S_k(\Gamma)$ the
unique normalized cusp form. In the following table, the first row contains the elements of $P^1(\Z/5\Z)$, namely
the bottom rows of a system of representatives for $\Gamma \backslash \Gamma_1$, while the second and third rows
list the components of rational generators $P_f^\pm$
of the Hecke eigenspace of $(W_w^\Gamma)^\pm$ that corresponds to the Hecke eigenform $f$ (here $P_f^-$ generates
$(W_w^\Gamma)^-$, and only the eigenvalue of $T_2$ is needed for the computation of $P_f^+$). 
  
\begin{table}[ht]\footnotesize\tabcolsep=0.21cm
\begin{tabular}{c c c c c c c}
\  & (0 1) & (1 1) & (1 3) & (1 2) &(1 4) & (1 0) \\ \hline \noalign{\smallskip}
$P_f^+$ & $-5X^2 + 1$ & $-5X^2 + 5$ & $8X^2 + 13X - 8$ & $8X^2 - 13X - 8$ & $-5X^2 + 5$ & $-X^2 + 5$\\
$P_f^-$ & $X$      & $X^2 + 2X + 1$ & $2X^2 - 3X - 2$ & $-2X^2 - 3X + 2$ & $-X^2 + 2X - 1$ & $ X $
\end{tabular} 
\end{table}

\noindent We have $\rho_f^\pm=\om_f^\pm P_f^\pm$ with
\[\om_f^+=r_{I,w}(f)=  -0.0051365773 i, \quad
\om_f^-=-wr_{I,w-1}(f)=   0.0208651386, \]
computed via \eqref{5.7} using the numerical computation of $L$-series implemented by Tim Dokchitser in MAGMA. 
Theorem \ref{thm_main} then gives $(f,f)=0.00014513335$, in agreement with the value obtained analytically in
\cite{Co}. 

Notice that $r_{A,n}^+(f)/r_{I,w}(f) \equiv 0 \pmod{13}$ for all $n$ with $0<n<w$, and all $A\in
\Gamma\backslash\Gamma_1$ (see \eqref{5.1} for notation). 
As in Manin's proof of Ramanujan's congruence mod 691 \cite{M73}, this suggests that there should be a congruence
between $f$ and an Eisenstein series. Indeed by looking at the first few coefficients\footnote{By Sturm's result
checking the first three coefficients of both sides is enough to prove the congruence.} we obtain that
$$f \equiv E_4(z)-E_4(5z)\pmod{13}$$ 
where $E_4$ is the weight 4 Eisenstein series of full level, normalized to have the coefficient of $q$ equal to 1.
A generalization of Ramanujan's congruence to Hecke eigenforms and Eisenstein series for $\Gamma_0(N)$ is in
progress.

\section{Rational decomposition of modular forms}\label{sec6}

For $\Gamma$ a finite index subgroup of $\Gamma_1$ normalized by $\epsilon$, we
give an explicit decomposition of $f\in S_k(\Gamma)$ in terms of explicit generators,
generalizing the result in the full level case \cite{Po}. For $\Gamma_1$, these are
the generators with rational periods studied in \cite{KZ}, where their periods were
first computed. For $\Gamma_0(N)$, the periods of these generators were computed for $N$
square free in \cite{An}, and for arbitrary $N$ in \cite{FY}. These generators
have explicit formulas as Poincar\'e series when $k>2$. 

To define these generators, for $A\in \Gamma\backslash\Gamma_1$ , $0\le
n\le w$, define the periods $r_{A,n}(f)$ by 
\begin{equation}\label{5.1}
\rho_f(A)(X)=\sum_{n=0}^w (-1)^{w-n}\binom{w}{n}r_{A,n}(f) X^{w-n}
\end{equation}
and similarly define $r_{A,n}^{\pm}(f)$ with $\rho_f$ replaced by $\rho_f^{\pm}$. Let
$R_{A,n}\in S_k(\Gamma)$ be the dual of the linear functional
$f\rightarrow \frac{1}{[\og_1:\og]}r_{A,n}(f)$, with respect to the Petersson product:
\[ (f, R_{A,n})=\frac{r_{A,n}(f)}{[\og_1:\og]}, \text{ for all } f\in S_k(\Gamma), 
\] 
and similarly define $R_{A,n}^{+}$, $R_{A,n}^-$. For $\Gamma=\Gamma_0(N)$, the
polynomials $\rho^-(R_{A,n}^{+})$, $\rho^+({R_{A,n}^-})$ have rational coefficients.

For $\kappa\in \{+, -\}$ and $0\le j\le w$, define the
linear combinations of periods:
\[
s_{A,j}^\kappa(f)=\sum_{n=0}^j \binom{j}{n} (-1)^{j-n} r_{A,j}^{\kappa}(f).
\] 
We then have the following generalization of Theorem 1.1 in \cite{Po}, which gives explicit
inverses of the Eichler-Shimura maps \eqref{7.1}.   
\begin{theorem}\label{thm6.1} Let $\Gamma$ be a finite index subgroup of $\Gamma_1$ normalized
by $\epsilon$, and let $\kappa_1, \kappa_2\in\{+, -\}$ with $\kappa_1\ne \kappa_2$ if $k$ even
and $\kappa_1=\kappa_2$ if $k$ odd. For $f\in S_k(\Gamma)$ 
\begin{equation*}
\frac{-3 C_{k}}{2}\cdot f=\sum_{A\in \og\backslash\og_1}
\sum_{n=0}^w
\binom{w}{n}s_{AU^{-1},n}^{\kappa_1}(f) R_{A,n}^{\kappa_2} .
\end{equation*}
\end{theorem}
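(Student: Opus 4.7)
The strategy is to test the identity against an arbitrary $g\in S_k(\Gamma)$ via the Petersson product. Since $(\cdot,\cdot)$ is nondegenerate on $S_k(\Gamma)$, the claimed decomposition of $f$ will follow once I verify that both sides produce the same value when paired with $g$. Pairing the left-hand side with $g$ gives $-\frac{3C_k}{2}(f,g)$, which by the refined Haberland formula (Theorem~\ref{thm_main}) equals $-\frac{1}{2}\{\rho_f^{\kappa_1},\overline{\rho_g^{\kappa_2}}\}$. The right-hand side, by the defining property of $R_{A,n}^{\kappa_2}$, unfolds to $\frac{1}{[\og_1:\og]}\sum_{A,n}\binom{w}{n}s_{AU^{-1},n}^{\kappa_1}(f)\,\overline{r_{A,n}^{\kappa_2}(g)}$. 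Writing $P=\rho_f^{\kappa_1}$, $Q=\overline{\rho_g^{\kappa_2}}$, and $q_{A,n}=\overline{r_{A,n}^{\kappa_2}(g)}$, the theorem reduces to the purely algebraic identity
\[
-\tfrac12\,\lla P|(T-T^{-1}),Q\rra
= \tfrac{1}{[\og_1:\og]}\sum_{A,n}\binom{w}{n}s_{AU^{-1},n}(P)\,q_{A,n}
\]
for period polynomials $P,Q\in W_w^\Gamma$ of $\epsilon$-parities $\kappa_1,\kappa_2$.

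The first crucial step is to recognize the right-hand side above as $\lla P|U,Q\rra$. The binomial manipulation
$\sum_n\binom{w}{n}s_{AU^{-1},n}(P)X^n=\sum_j\binom{w}{j}r_{AU^{-1},j}(P)X^j(1-X)^{w-j}$
expresses this polynomial as $P(AU^{-1})|_{-w}U(X)=(P|U)(A)(X)$, since $U=\bigl(\begin{smallmatrix}1&-1\\1&0\end{smallmatrix}\bigr)$ implements the substitution $Y\mapsto 1-1/Y$ on $V_w$. Paired against the evaluation $\la X^n,Q(A)\ra=q_{A,n}$, the sum collapses to $\lla P|U,Q\rra$, so the target identity reduces to $\lla P|(T-T^{-1}+2U),Q\rra=0$.

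To establish this, I would expand $(P|T)(A)(X)=P(AT^{-1})(X+1)$ and $(P|T^{-1})(A)(X)=P(AT)(X-1)$ by the binomial theorem and pair with $Q(A)$. The period relation $Q|(1+S)=0$ forces the coefficient symmetry $q_{A,w-n}=(-1)^{n+1}q_{AS^{-1},n}$, and applying the change of variable $A\to AS$ together with the matrix identities $ST^{-1}=U^2$ and $ST=(\epsilon U\epsilon)^{-1}$ rewrites the pairing in terms of $s_{AU^2,n}(P)$ and an auxiliary sum indexed by the coset $A(\epsilon U\epsilon)^{-1}$. Using $U^2=JU^{-1}$ (so that $s_{AU^2,n}(P)=(-1)^w s_{AU^{-1},n}(P)$) and then invoking the $\epsilon$-equivariance $P|\epsilon=\kappa_1 P$ via $r_{B',n}(P)=\kappa_1(-1)^{w-n}r_{B,n}(P)$ for $B'=\epsilon B\epsilon$, followed by a further change of variable $A\to A'$, collects everything into the single sum
\[
\sum_{A,n}\binom{w}{n}s_{AU^{-1},n}(P)\bigl[q_{A,n}+\kappa_1(-1)^n q_{A',n}\bigr].
\]

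The final observation is that this sum vanishes: from $Q|\epsilon=\kappa_2 Q$ we have $q_{A',n}=\kappa_2(-1)^{w-n}q_{A,n}$, so the bracket becomes $\bigl(1+\kappa_1\kappa_2(-1)^w\bigr)q_{A,n}$, which is zero precisely when $\kappa_1\kappa_2=(-1)^{w+1}$ — exactly the theorem's hypothesis ($\kappa_1\neq\kappa_2$ for $k$ even and $\kappa_1=\kappa_2$ for $k$ odd). The hardest part of the proof will be the bookkeeping of signs and coset reindexings in the third paragraph; the pleasant feature is that the final cancellation is powered by exactly the parity constraint under which the refined Haberland formula itself is nontrivial, so the two parity regimes of Theorem~\ref{thm_main} and Theorem~\ref{thm6.1} match automatically.
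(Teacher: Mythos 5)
Your proposal is correct and follows the same overall route as the paper's proof: dualize against an arbitrary $g\in S_k(\Gamma)$ using the nondegeneracy of the Petersson product, apply the refined Haberland formula of Theorem~\ref{thm_main}, and reduce everything to the algebraic identity $-\tfrac12\lla P|(T-T^{-1}),Q\rra=\lla P|U,Q\rra$ for $P,Q\in W_w^\Gamma$ of parities $\kappa_1,\kappa_2$; your binomial identification of $\sum_n\binom{w}{n}s_{AU^{-1},n}(P)X^n$ with $(P|U)(A)$ is exactly the computation the paper performs. The one place where you genuinely diverge is the proof of that identity. The paper writes $P|(T-T^{-1})=P|(US-SU^2)$, uses the $S$-relations of $P$ and $Q$ to get $\lla P|(U^2-U),Q\rra$, then invokes $P|(1+U+U^2)=0$ to reach $-\lla P,Q\rra-2\lla P|U,Q\rra$, and finally kills $\lla P,Q\rra$ by the behaviour of $\lla\cdot,\cdot\rra$ under $\epsilon$. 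You never use the relation $P|(1+U+U^2)=0$: instead you convert the $U^2$-term into a $U$-term via $ST=(\epsilon U\epsilon)^{-1}$ together with the $\epsilon$-eigenproperties of $P$ and $Q$, arriving at the factor $1+\kappa_1\kappa_2(-1)^w$, which vanishes under exactly the stated parity hypothesis. I checked the coefficient identities you rely on ($q_{A,w-n}=(-1)^{n+1}q_{AS^{-1},n}$ from $Q|(1+S)=0$, and $q_{A',n}=\kappa_2(-1)^{w-n}q_{A,n}$ from the $\epsilon$-parity), and the two mechanisms are consistent, since $-\lla P,Q\rra=(1+\kappa_1\kappa_2(-1)^w)\lla P|U,Q\rra$ holds for eigenvectors of either parity. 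In short: your version trades the $U$-period relation for heavier use of the $\epsilon$-symmetry, at the cost of more index bookkeeping, but it is a valid proof of the same key identity.
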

\begin{proof}
Let $P\in (W_w^\Gamma)^{\kappa_1}$, $Q\in (W_w^\Gamma)^{\kappa_2}$. Then
\[\{P,\overline{Q}\}=\lla P|US-SU^2,\overline{Q}\rra 
=\lla P|U^2-U,\overline{Q} \rra 
=-2 \lla P|U, \overline{Q} \rra
\]
where the second equality follows since $P|S=-P, Q|S=-Q$, while the third from
$P|U^2-U=P|-1-2U$, together with \eqref{conj1}. 

If $R(X)=\sum_{n=0}^w (-1)^{w-n}\binom{w}{n}r_n X^{w-n}\in V_w$ then 
$$
R|U(X)=\sum_{j=0}^w\binom{w}{j} s_j X^j, \text{ with } s_j=\sum_{n=0}^j (-1)^{j-n}\binom{j}{n}
r_n.
$$ 
By Theorem \ref{thm_main} and the preceding computations it follows that for
$f,g\in S_{k}(\Gamma)$
\[
 3  C_{k}(f,g)=\{\rho_f^{\kappa_1} ,
\overline{\rho_g^{\kappa_2}}\}=-\frac{2}{[\og_1:\og]} \sum_{A\in \og\backslash
\og_1}\sum_{j=0}^w
\binom{w}{j}
s_{AU^{-1},j}^{\kappa_1}(f) \overline{r_{A,j}^{\kappa_2}(g)}.
\] 
Since $\overline{r_{A,j}^+(g)}=[\og_1:\og]\cdot (R_{A,j}^+, g)$, the claim follows.  
\end{proof}

\section{Extra relations satisfied by period polynomials of cusp forms}\label{sec7}

In this section, we determine the image of the maps 
$\rho^\pm:S_k(\Gamma)\rightarrow (W_w^\Gamma)^\pm$, namely the extra relations satisfied by
$\rho_f^\pm$ for $f\in S_k(\Gamma)$ which are independent of the period relations.  To be
explicit, the extra relations we
obtain require the determination of the periods $r_{B,m}^\mp(R_{A,w}^\pm)$ of the generators
defined in the previous section.  For $\Gamma_1$, these periods were computed in \cite{KZ},
and for $\Gamma_0(N)$, they were computed in \cite{An} (for $N$ square free, and not quite in
closed form), and in \cite{FY} (only the principal periods $r_{I,m}^\mp ({R_{I,n}^\pm})$).  
For $\Gamma_0(N)$ with small $N$, the computations in \cite{FY} are sufficient to make
completely explicit the extra relations, and we illustrate this for the case
$\Gamma=\Gamma_0(2)$. The relations are similar to the relation found by
Kohnen and Zagier in the full-level case \cite{KZ} (see also \cite[Sec. 2]{Po}). 

We first define bases of $(C_w^\Gamma)^\pm$, using the terminology in Definition
\ref{d_cusp}. For each cusp $\cc\in \Gamma\backslash \Gamma_1/\Gamma_{1\infty}$,
which is regular
if $k$ is odd, define $P_{\cc}\in C_w^\Gamma$ as
in \eqref{cw} by
fixing $A_\cc\in\cc$ a representative, and setting $c_A=(-1)^w c_{AJ}=1$ if
$[A]^+=[A_\cc]^+$, and   $c_A=0$ if $[A]\ne [A_\cc]$. Then $\{P_{\cc}\}$ form 
a basis of $C_w^\Gamma$ if $k>2$, and if $k=2$ there is only one relation $\sum_\cc P_\cc=0$.  

Assume that $\Gamma$ is normalized by $\epsilon$, and denote $A'=\epsilon
A\epsilon$. Note that if $\cc=[A]$ is a cusp, then $\cc'=[A']$ is well-defined.
Since $P_{\cc}|\epsilon=P_{\cc'}$, we have $ P_{\cc}^+=P_{\cc'}^+$ and
$P_{\cc}^-=-P_{\cc'}^-$. Therefore a basis of
$(C_w^\Gamma)^-$ consists of $P_{\cc}^-$ for each unordered pair $( \cc,\cc')$ of cusps
with $\cc\ne \cc'$, and  a basis of $(C_w^\Gamma)^+$ consists of $P_{\cc}^+$ for each 
unordered pair $( \cc,\cc')$ of cusps (when $k$ is odd only pairs of regular cusps are
considered; note that $\cc$ is regular iff $\cc'$ is regular).  

We now fix $A\in \cc$ for each (regular if $k$ odd) cusp
$\cc\in\Gamma\backslash
\Gamma_1/\Gamma_{1\infty}$ and we write, by a slight abuse of notation,
$R_{\cc,n}=R_{A,n}$, $r_{\cc,n}=r_{A,n}$ with the notation of the previous section
($R_{\cc,n}$ does depend on the choice of representative $A$, but we fix such a choice).
For each unordered pair of cusps $(\cc,\cc')$ we take  $g=R_{\cc,w}^+$ in Theorem
\ref{thm_main}, and if
$\cc\ne \cc'$ we take also  $g=R_{\cc,w}^-$ (again only for regular cusps if $k$ is odd),
obtaining the following linear relations satisfied by all $f\in S_k(\Gamma)$ \footnote{In this
section we occasionally write $\rho(f)$ instead of $\rho_f$ to simplify notation.}
\begin{equation}\label{e_rel}
\frac{3 C_k}{ [\og_1:\og] } r_{\cc,w}^+(f)= \{ \rho^+(f), \overline{\rho^-({R_{\cc,w}^+})}\},
\quad 
\frac{3 C_k}{ [\og_1:\og] } r_{\cc,w}^-(f)= \{ \rho^-(f),
\overline{\rho^+({R_{\cc,w}^-})}\}.
\end{equation}
The linear forms appearing in these relations can be applied to all $P\in
(W_w^\Gamma)^\pm$,
and putting together the relations involving $\rho^+(f)$ into a map $\lambda_+$, and the
relations involving $\rho^-(f)$ (for pairs with $\cc\ne\cc'$) into a map $\lambda_{-}$,  we
obtain two linear maps $\lambda_\pm:(W_w^\Gamma)^\pm\rightarrow
\C^{d_\pm}$ with $d_\pm=\dim(C_w^\Gamma)^\pm$. If $d_{-}=0$ the map $\lambda_{-}$ is trivial. 
 
\begin{prop}\label{p6.2} Assume $k \ge 3$ and let $\Gamma$ be a finite index subgroup of
$\Gamma_1$ normalized by $\epsilon$. With $\lambda_\pm$ defined above, we have exact sequences:
\[
0\rightarrow S_k(\Gamma)\overset{\rho^{\pm}}{\longrightarrow}
(W_w^\Gamma)^\pm\overset{\lambda_\pm}{\longrightarrow} \C^{d_\pm}\rightarrow 0.
\]
\end{prop}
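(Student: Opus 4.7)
The plan is to verify exactness at each of the three middle positions in turn; the only nontrivial ingredient is surjectivity of $\lambda_\pm$.

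Injectivity of $\rho^\pm$ on $S_k(\Gamma)$ is already in hand from the proof of Theorem \ref{thm2.1}: the refined Haberland formula (Theorem \ref{thm_main}) forces $(f,f)=0$ whenever $\rho_f^\pm=0$. The inclusion $\mathrm{im}\,\rho^\pm\subseteq\ker\lambda_\pm$ is built into the definition of $\lambda_\pm$, since the relations \eqref{e_rel} were obtained by plugging $g=R_{\cc,w}^\mp$ into Theorem \ref{thm_main} and therefore say exactly that $\lambda_\pm\circ\rho^\pm=0$.

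For surjectivity I would argue that even the restriction of $\lambda_\pm$ to the coboundary subspace $(C_w^\Gamma)^\pm$ is already a bijection onto $\C^{d_\pm}$. By Lemma \ref{l4.4} the pairing $\{\cdot,\cdot\}$ vanishes on $(C_w^\Gamma)^\pm\times W_w^\Gamma$, so the pairing contribution to $\lambda_\pm$ drops out on $(C_w^\Gamma)^\pm$, leaving only the vector of coordinate functionals
\[
P \;\longmapsto\; \bigl(\tfrac{3C_k}{[\og_1:\og]}\,r_{\cc,w}(P)\bigr)_\cc,
\]
indexed by the chosen representatives of unordered pairs $\{\cc,\cc'\}$ (regular if $k$ is odd). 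Evaluating on the basis $\{P_{\cc''}^\pm\}$ of $(C_w^\Gamma)^\pm$ via \eqref{cw} and the fact that $\epsilon$ preserves constant terms of polynomials, the $(\cc,\cc'')$-entry becomes the half-sum $\tfrac12(\mathbf{1}_{\cc=\cc''}\pm\mathbf{1}_{\cc=\cc''{}'})$. Since exactly one representative per unordered pair was chosen, a nonzero off-diagonal entry would force two distinct chosen representatives to lie in a single pair, which is impossible; the matrix is therefore diagonal with entries $1$ or $\tfrac12$, hence invertible.

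Once surjectivity of $\lambda_\pm$ is in place, the Eichler--Shimura identity $\dim(W_w^\Gamma)^\pm=\dim S_k(\Gamma)+d_\pm$ from Theorem \ref{thm2.1} yields $\dim\ker\lambda_\pm=\dim S_k(\Gamma)=\dim\mathrm{im}\,\rho^\pm$, upgrading the inclusion to equality and completing the exact sequence. The main obstacle I anticipate is the bookkeeping in the diagonal-matrix computation: one must carefully track the two orbit spaces $\Gamma_{1\infty}^+\subset\Gamma_{1\infty}$, the $\epsilon$- and $J$-actions on cusps, the parity of $w$, and the restriction to regular cusps when $k$ is odd. Once those conventions are pinned down, \eqref{cw} makes the matrix diagonal almost by inspection, and the remainder of the argument is purely formal.
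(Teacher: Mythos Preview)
Your proof is correct and follows essentially the same route as the paper's. Both arguments observe that $\im\rho^\pm\subset\ker\lambda_\pm$ by construction, use Lemma~\ref{l4.4} to kill the pairing term on $(C_w^\Gamma)^\pm$, and then evaluate the remaining coordinate functionals on the basis $\{P_\cc^\pm\}$ to conclude. The paper's proof is terser: it only records that the $\cc$-th relation fails on $P_\cc^\pm$ (nonzero diagonal entry) and then asserts ``the conclusion follows''; you make explicit the off-diagonal vanishing (hence the diagonal form of the matrix) and the dimension count from Theorem~\ref{thm2.1} that closes the argument. Your version is a faithful expansion of what the paper leaves implicit.
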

\begin{proof} We have $\im \rho^{\pm}\subset \ker \lambda_\pm$ by construction. 
Note that the first relation in \eqref{e_rel} is not satisfied by $P_{\cc}^+$, while
the second is not satisfied by $P_{\cc}^-$ if $\cc\ne \cc'$, since the LHS is nonzero,
while the RHS vanishes by Lemma \ref{l4.4} a). Since $\{P_{\cc}^\pm \}$ form a basis of
$(C_w^\Gamma)^\pm$, the conclusion follows.                                
\end{proof}
\begin{remark} For $\Gamma=\Gamma_0(N)$, Proposition \ref{prop7.2} characterizes those $N$ for
which the extra relations involve only the even parts of the period polynomials. 
\end{remark}
For example, if $\Gamma=\Gamma_0(p)$ with $p$ prime, there are only two cusps $[I]$ and $[S]$,
and for all $P\in W_w^\Gamma$ we have $P(S)=-P(I)|S$ by the period relations. In
particular $r_{S,w}(f)=-r_{I,0}(f)$. Noting also that $P(I)^+$, $P(S)^+$ are the even parts of
$P(I), P(S)$, so that $R_{I,n}^+= R_{I,n} $ for $n$ even, we have the
following simpler version
of Proposition \ref{p6.2}. 

\begin{corollary}\label{c6.4} Let $\Gamma=\Gamma_0(p)$ with $p$ prime, and let $k>2$ even. Then
the two extra relations satisfied by all even period polynomials $\rho^+(f)$ for $f\in
S_k(\Gamma)$ are
\[\frac{3 C_k}{ [\og_1:\og] } r_{I,a}(f)= \{ \rho^+(f),
\overline{\rho^-({R_{I,a}})}\},\quad
\text{ for } a=0,w. 
\] 
\end{corollary}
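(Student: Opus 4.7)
The plan is to specialize Proposition \ref{p6.2} to $\Gamma = \Gamma_0(p)$, then rewrite the resulting extra relations using the period relation $\rho_f | (1 + S) = 0$ and the parity of $w$ to concentrate everything at the identity coset.

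First I would verify that $\Gamma_0(p)$ has exactly the two cusps $[I]$ and $[S]$, and that both are fixed by the involution $\cc \mapsto \cc'$ induced by $\epsilon$: $I' = I$ is immediate, while $S' = \epsilon S \epsilon = -S$ coincides with $S$ in $\Gamma_0(p)\backslash \Gamma_1/\Gamma_{1\infty}$ since $-I \in \Gamma_0(p)$. Counting unordered pairs $(\cc, \cc')$ then gives $d_+ = 2$ and $d_- = 0$, so Proposition \ref{p6.2} yields no extra relation on $(W_w^\Gamma)^-$ and exactly two on $(W_w^\Gamma)^+$, indexed by the two cusps:
\[
\frac{3 C_k}{[\og_1:\og]} r_{\cc, w}^+(f) = \{\rho^+(f), \overline{\rho^-(R_{\cc, w}^+)}\}, \quad \cc \in \{[I], [S]\}.
\]

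The cusp $\cc = [I]$ directly yields the $a = w$ case: since $w$ is even, $\rho_f^+(I)$ is an even polynomial, so $r_{I, n}^+(f) = r_{I, n}(f)$ for $n$ even, and in particular $r_{[I], w}^+(f) = r_{I, w}(f)$; Petersson duality then forces $R_{[I], w}^+ = R_{I, w}$.

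For $\cc = [S]$ and the $a = 0$ case, I would apply $\rho_f | (1 + S) = 0$ at the coset $A = S$ to get $\rho_f(S) = -\rho_f(I)|S$. Expanding both sides via \eqref{5.1} and the identity $\rho_f(I)|S(X) = \rho_f(I)(-1/X) X^w$, then comparing coefficients of $X^m$, yields $r_{S, w-m}(f) = (-1)^{m+1} r_{I, m}(f)$; in particular $r_{[S], w}(f) = -r_{I, 0}(f)$. Since $S' = S$ as a coset, the same identity passes to the $+$-parts, giving $r_{[S], w}^+(f) = -r_{I, 0}(f)$ and $R_{[S], w}^+ = -R_{I, 0}$ by duality. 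Substituting into the $\cc = [S]$ relation, the two minus signs (one on each side of the pairing) cancel, producing the $a = 0$ case and completing the proof. The argument is essentially a direct packaging of Proposition \ref{p6.2}; the only point requiring care is the sign bookkeeping when transferring $r_{[S], w}^+$ and $R_{[S], w}^+$ back to the cusp at infinity through the period relation.
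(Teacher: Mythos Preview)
Your proposal is correct and follows essentially the same route as the paper: the paper's argument (given in the paragraph immediately preceding the corollary) also specializes Proposition~\ref{p6.2} to the two cusps $[I]$ and $[S]$, uses the period relation $P(S)=-P(I)|S$ to obtain $r_{S,w}(f)=-r_{I,0}(f)$, and invokes $R_{I,n}^+=R_{I,n}$ for even $n$ to drop the superscripts. Your write-up is somewhat more explicit about verifying $d_+=2$, $d_-=0$ and about the sign bookkeeping, but the underlying argument is the same.
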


For small values of $N$ (eg $N=2,3,4,5$), the polynomials $\rho_f$ are completely determined
by
the principal parts $\rho_f(I)$, so that the relations above are completely explicit via the
computation of $\rho^-(R_{I,a})(I)$ in \cite{FY}. In the remainder of this section, we discuss
in detail the case of period polynomials for $\Gamma=\Gamma_0(2)$, which have been studied in
\cite{IK},\cite{FY},  \cite{KT}. 

We take as coset representatives for $\Gamma\backslash \Gamma_1$ the set
$\{I,U,U^2\}$. Denoting by $\overline{A}$ the coset
$\Gamma A$, we have $\ov{S}=\ov{U}$, $\ov{US}=\ov{I}$, $\ov{U^2 S}=\ov{U^2}$. For
$P\in W_w^\Gamma$, the
period relations $P|1+S=0$, $P|1+U+U^2=0$ reduce to
\begin{equation}\label{7.2}
P(U)+P(I)|S=0,\quad P(U^2)|1+S=0, \quad P(U^2)+ P(U)|U+P(I)|U^2=0. 
\end{equation}
The polynomials $P(U), P(U^2)$ are therefore determined by $P(I)$ which satisfies the relation
\begin{equation}\label{per2}
 P(I)|(ST-ST^{-1})(1+S)=0.
\end{equation}
Let $U_w\subset V_w$ denote the set of polynomials satisfying \eqref{per2}, so that we can
identify $W_w^\Gamma$ with $U_w$ via $P\rightarrow P(I)$. Conjugation by
$\epsilon$ leaves unchanged each coset $\ov{I}, \ov{U}, \ov{U^2}$, hence $P^+, P^-$ 
correspond to the even and odd parts of the polynomial $P(I)$ in this identification.

 To express the formula in Theorem \ref{thm_main} in terms of  $\rho_f(I), \rho_g(I)$ alone,
let $P\in (W_w^\Gamma)^+, Q\in
(W_w^\Gamma)^-$. We have $\lla P|T-T^{-1}, Q \rra=-2\lla P|U, Q \rra$ as in the proof of
Theorem
\ref{thm6.1}, and using \eqref{7.2} we obtain
\[
 \lla P|U, Q \rra=\frac{1}{3}<P(I)|2T^{-1}-2I-T, Q(I)>=-\frac{1}{2} <P(I)|T-T^{-1}, Q(I)> 
\]
where $\la P(I),Q(I)\ra=\la P(I)|T+T^{-1}, Q(I)\ra =0$ since $P(I),Q(I)$ have opposite
parity. We can therefore restate Theorem \ref{thm_main} as follows, setting $P_f=\rho_f(I)\in
U_w$
\begin{equation}\label{7.5}
 3 C_{k}\cdot (f,g)=\la P_f^+|T-T^{-1}, \ov{P_g^-}\ra. 
\end{equation}

By Proposition \ref{prop7.2} and the Eichler-Shimura isomorphism \eqref{7.1}, the map $f
\rightarrow P_f^{-}$ gives an isomorphism 
$S_{k}(\Gamma_0(2))\simeq U_w^-$, while the image of the map  $f\rightarrow P_f^+$ is a
codimension 2 subspace of $U_w^+$. \footnote{A direct proof in this case is contained in
\cite[Theorem
4]{KT}.} To simplify notation, let $r_n(f)=r_{I,n}(f)$, and $R_n=R_{I,n}$ for $0\le n
\le w$. 
\begin{corollary}\label{c6.5}
 For $f$ in $S_k(\Gamma_0(2))$, let 
$
s_n(f)=\displaystyle\sum_{\substack{j=0\\ n-j \text{ odd}}}^n\binom{n}{j} r_j(f)
$. The extra relations satisfied by the even periods of $f$ are
\[
r_a(f)=\sum_{\substack{n=0\\ n \text{ odd}}}^w\binom{w}{n} s_{w-n}(f)
\frac{2}{C_{k}} r_{a}(R_{n})\quad \text{ for  } a=0,w.
\]
\end{corollary}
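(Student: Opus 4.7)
The plan is to deduce Corollary \ref{c6.5} from Corollary \ref{c6.4} specialized to $p = 2$, by expanding the pairing $\{\rho_f^+, \overline{\rho^-(R_a)}\}$ in terms of explicit periods and using \eqref{7.5} to reduce everything to the principal parts $P_f^+$ and $\overline{P_{R_a}^-}$.

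Specializing Corollary \ref{c6.4} to $\Gamma = \Gamma_0(2)$ (where $[\og_1:\og] = 3$) and invoking \eqref{7.5} produces
\[
C_k\, r_a(f) = \la P_f^+|(T - T^{-1}),\, \overline{P_{R_a}^-}\ra \qquad (a = 0, w).
\]
Since $k$ is even, $P_f^+$ is an even polynomial; writing it via \eqref{5.1} and expanding $(X+1)^{w-n} - (X-1)^{w-n}$ binomially yields
\[
P_f^+|(T - T^{-1})(X) = 2\sum_{j \text{ odd}} X^j \sum_{n \text{ even}} \binom{w}{n}\binom{w-n}{j}\, r_n(f).
\]
The identity $\binom{w}{n}\binom{w-n}{j} = \binom{w}{j}\binom{w-j}{n}$ then recognizes the inner sum as $\binom{w}{j}\, s_{w-j}(f)$, since for $w$ even and $j$ odd the parity condition $(w-j)-n$ odd coincides with $n$ even.

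For the other factor, Hermitian symmetry of the Petersson product combined with the definition of $R_n$ as dual to $f \mapsto r_n(f)/[\og_1:\og]$ forces the key identification $r_a(R_n) = \overline{r_n(R_a)}$, so that
\[
\overline{P_{R_a}^-}(X) = -\sum_{n \text{ odd}} \binom{w}{n}\, r_a(R_n)\, X^{w-n}.
\]
Substituting both expansions into the coefficient form of the pairing, $\la \sum a_j X^j, \sum b_k X^k \ra = \sum_j (-1)^{w-j}\binom{w}{j}^{-1} a_j b_{w-j}$, and tracking the signs for $w$ even and $j$ odd collapses the double sum to $2 \sum_{n \text{ odd}} \binom{w}{n}\, s_{w-n}(f)\, r_a(R_n)$. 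Dividing by $C_k$ gives the identity stated in the corollary.

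The hard part will be purely combinatorial: keeping all signs and parities straight through the binomial manipulations and spotting that the partial sums which appear are precisely the $s_{w-j}(f)$. The only nontrivial conceptual ingredient beyond \eqref{7.5} and Corollary \ref{c6.4} is the conjugation identity $r_a(R_n) = \overline{r_n(R_a)}$, which is what allows the generator periods $r_a(R_n)$ on the right-hand side of the corollary to arise from the coefficients of $\overline{P_{R_a}^-}$ in the pairing computation.
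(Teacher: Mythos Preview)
Your proposal is correct and follows essentially the same route as the paper's own proof: both start from Corollary~\ref{c6.4}, pass to principal parts via \eqref{7.5}, expand $P_f^+|(T-T^{-1})$ to recognise the coefficients $s_{w-j}(f)$, and invoke the Hermitian identity $\overline{r_n(R_a)}=r_a(R_n)$ to rewrite $\overline{P_{R_a}^-}$. The only cosmetic difference is that the paper records the formula for the full $P_f|(T-T^{-1})$ and lets the parity in the pairing kill the irrelevant half, whereas you restrict to $P_f^+$ from the outset; the computations are otherwise identical.
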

\noindent From \cite{FY}, for $0<w<n$, $n$ odd, we have
$r_{w}(R_{n})=-\displaystyle\frac{r_0(R_{\nt})}{N^n}$ and 
\[
\frac{2}{C_{k}}r_0(R_{n})=-N^{\nt}\frac{B_{\nt+1}}{\nt+1}+\frac{k}{B_k}\frac{B_{ n+1 } }
{ n
+1} \frac{B_{\nt+1}}{\nt+1}\frac{\alpha_{N,k}(n)}{N}+\frac{\delta_{w,n+1}}{w},
\]
where $\nt=w-n$, $\alpha_{N,k}(n)=\frac{1-N^{-n-1}}{1-N^{-k}}$ (recall $N=2$), and $B_m$ are
the Bernoulli numbers. Note that there is a
minus sign missing in the normalization of the generators denoted by $R_{\Gamma,w,n}$ in
\cite[Def. 1.1]{FY}, and with this correction we have $R_n=-\frac{C_{k}}{2}
R_{\Gamma,w,n}$. 
\begin{proof} 
Since $P_f|T-T^{-1}(X) =-2 \sum_{n=0}^w (-1)^n \binom{w}{n}s_{n}(f) X^{w-n},$
and $\ov{r_{n}(R_{0})}=r_0(R_{n})$, the claim follows from Corollary \ref{c6.4}. 

\end{proof}

The periods $r_n(f)$ are related to the critical values of the $L$-series
associated to $f$, and when $f$ is a newform they can be readily computed using MAGMA \cite{Mgm}. The relations
in Corollary \ref{c6.5} have been checked numerically for $k=8,10,14$.

\section{Period polynomials of arbitrary modular forms}\label{sec8}

In this section we define period polynomials for noncuspidal modular forms, and extend
Haberland's formula and the action of Hecke operators to the larger space  of
period polynomials of all modular forms. An important feature of the
larger space $\wW_w^\Gamma$ is that the 
the pairing $\{ \cdot,\cdot \}$ has a natural nondegenerate extension to it, while
on $W_w^\Gamma$ it is degenerate (its radical is $C_w^\Gamma$). If $\Gamma=\Gamma_1(N)$ and
$k>2$, the period polynomial maps $\rho^{\pm}$ extend naturally to the larger space, and
they give isomorphisms between $M_k(\Gamma)$ and $(\wW_w^\Gamma)^\pm$. Surprisingly, when
$k=2$, $\Gamma=\Gamma_0(N)$ and $N$ is squarefree with at least two prime factors, only one of
the two maps is an isomorphism.

For the full modular
group, period polynomials of Eisenstein series were defined in \cite{KZ}, using the description
of periods as special values of the associated $L$-function, and the enlarged space of period
polynomials was introduced in \cite{Z91}. A different construction using an Eichler integral
was given more recently in \cite{BGKO}, in the more general context of weakly holomorphic
modular forms. We extend both the Eichler integral and the $L$-function approach to a finite
index subgroup $\Gamma$ of $\Gamma_1$.

For $f\in M_k(\Gamma)$, we define $\wr_f=\tf|1-S$ as in \eqref{e_int} (with the same action as
on period polynomials), where the Eichler
integral $\tf:\Gamma \backslash \Gamma_1 \rightarrow \A$ is given by
\begin{equation}\label{a1}
 \tf(A)(z)=\int_{z}^{i\infty}[f|A(t)-a_0(f|A)] (t-z)^w dt  
\end{equation}
with $a_0(f|A)$ the constant term of the Fourier expansion of $f|A$. Note that
$a_0(f|A)=a_0(f|AT)$, so $\tf|1-T=0$. Let
 $$\wV_w:=\Big\{\sum_{-1\le i \le w+1} a_i X^{i}: a_i\in \C \Big\},$$  
and let $\wV_w^\Gamma$ be the space of functions $P:\Gamma \backslash \Gamma_1 \rightarrow
\wV_w$ with $P(-A)=(-1)^wP(A)$ for $A \in \Gamma \backslash \Gamma_1$.
We define an action of $g\in \Gamma_1$ on $P\in \wV_w^\Gamma$ by $P|g(A)=P(Ag^{-1})|_{-w} g$
as before. Note that this is no longer well defined in general, as elements of
$\Gamma_1$ do not preserve the space $\wV_w$. However one can still define the subspace
$$
\wW_w^\Gamma=\{P\in \wV_w^\Gamma: P|1+S=P|1+U+U^2=0 \}.
$$
We will show below that $\wr_f \in \wW_w^\Gamma$. That it satisfies the period relations is
immediate: we have $\wr_f|1+S=0$, and $\wr_f|1-T=0$ from the definition, while 
\[
\wr_f|1+U+U^2 = \tf | (1-S)(1+U+U^2)= \tf|(1-T^{-1})(1+U+U^2)=0. 
\]
It remains to show that $\wr_f\in \wV_w^\Gamma$, and we will do this by relating it with the
polynomial $\rho_f \in V_w^\Gamma$ defined by \eqref{5.1}, where 
\begin{equation}\label{e7.2}
r_{A,n}(f)=(-1)^{n+1} \frac{\Gamma(n+1)}{(2\pi i)^{n+1}} L(n+1,f|A).
\end{equation}
The $L$-function $L(s,f)=\sum_{n=1}^\infty a_n(f) n^{-s}$
is given, if $\re(s)>k$, by the Mellin transform 
$$
(-1)^s (2\pi i)^{-s}\Gamma(s) L(s,f)=\int_0^{i\infty} [f(t)-a_0(f)]t^{s-1} dt,$$
and it can be extended meromorphically to $\C$ by fixing $z_0\in\H$, decomposing
$\int_0^{i\infty}=\int_0^{z_0}+\int_{z_0}^{i\infty}$, and making a change of
variables $t=S u $ in the first integral. We obtain a meromorphic function with at most
simple
poles at $s=0$ and $s=k$:
\begin{equation*}
\begin{split}
\frac{(-1)^{s}\Gamma(s)}{(2\pi i)^s}L(s,f)=\int_{z_0}^{i\infty} [f(t)-a_0(f)]t^{s-1}
dt+(-1)^s\int_{S z_0}^{i\infty} [f|S(t)-a_0(f|S)]t^{k-s-1} dt-\\
-a_0(f)\frac{z_0^s}{s}-(-1)^s a_0(f|S) \frac{(Sz_0)^{k-s}}{k-s}.
\end{split} 
\end{equation*}
Introducing as in \cite{KZ} the function $H_{z_0}\in V_w^\Gamma$ defined for $A\in
\Gamma\backslash \Gamma_1$ by
\begin{equation}\label{a10}
H_{z_0}(A)=\int_{z_0}^{i\infty} [f|A(t)-a_0(f|A)](t-X)^w
dt-a_0(f|A)\int_0^{z_0} (t-X)^w dt\in V_w
\end{equation}
we obtain from \eqref{5.1} and the analytic continuation above
\begin{equation}\label{a11}
 \rho_f(A)=H_{z_0}(A)-H_{S z_0}(AS^{-1})|S,
\end{equation}
namely $\rho_f=H_{z_0}-H_{S z_0}|S$. 

We now determine the relation between $\wr_f$ and $\rho_f$, which also shows that $\wr_f\in
\wW_w^\Gamma$.

\begin{prop} \label{pa1} For $f\in M_k(\Gamma)$, let $\rho_f^0 \in \wV_w^\Gamma$ be given by
$\rho_f^0(A)=(-1)^w\frac{a_0(f|A)}{w+1} X^{w+1}$. We have
\[\wr_f=\rho_f+ \rho_f^0|(1-S) , 
\]
namely $\wr_f(A)=\rho_f(A)+(-1)^w\frac{a_0(f|A)}{w+1} X^{w+1}+\frac{a_0(f|AS^{-1})}{w+1}
X^{-1}$. 
\end{prop}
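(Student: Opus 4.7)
The plan is to exploit formula \eqref{a11}, $\rho_f = H_{z_0} - H_{Sz_0}|S$, whose right-hand side is independent of the auxiliary base point $z_0 \in \H$. Since both sides are holomorphic in $z_0$ and in $X$, I would specialize $z_0 = X$, treating the polynomial variable as a complex number in $\H$. The auxiliary identity that does the work is
\begin{equation*}
H_z(A)(z) = \tf(A)(z) - \rho_f^0(A)(z),
\end{equation*}
which I would derive directly from \eqref{a1} and \eqref{a10} together with the elementary computation $\int_0^z(t-z)^w dt = (-1)^w z^{w+1}/(w+1)$; the second term on the right is exactly $\rho_f^0(A)(z)$ by the definition given in the proposition.

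Applying this auxiliary identity with $z = X$ immediately gives $H_X(A)(X) = \tf(A)(X) - \rho_f^0(A)(X)$. Applying it a second time with $A$ replaced by $AS^{-1}$ and $z$ replaced by $SX$, then multiplying both sides by $j(S,X)^w = X^w$ to convert the evaluation into the slash action, yields
\begin{equation*}
H_{SX}(AS^{-1})|S(X) = \tf|S(A)(X) - \rho_f^0|S(A)(X),
\end{equation*}
where $\rho_f^0|S$ is understood via the formal slash action extended to the larger space $\wV_w^\Gamma$. Subtracting these two identities and inserting them into \eqref{a11} specialized at $z_0 = X$ converts it into $\rho_f = \wr_f - \rho_f^0|(1-S)$, which is the first assertion of the proposition. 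The explicit formula in the statement then falls out of the one-line computation
\begin{equation*}
\rho_f^0|S(A)(X) = (-1)^w \frac{a_0(f|AS^{-1})}{w+1}\left(-\frac{1}{X}\right)^{w+1}X^w = -\frac{a_0(f|AS^{-1})}{w+1}\,X^{-1}.
\end{equation*}

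The one delicate point is the substitution $z_0 = X$: the integrals defining $H_{z_0}(A)$ converge and depend holomorphically on $z_0 \in \H$ thanks to the exponential decay of $f|A - a_0(f|A)$ near $i\infty$, so \eqref{a11} is an identity of holomorphic functions on $\H \times \H$ and the diagonal specialization is legitimate. The resulting equality lives \emph{a priori} only in the space of holomorphic functions of $X$ on $\H$, but the closed-form computation above exhibits both sides as Laurent polynomials with terms in degrees $-1, 0, \ldots, w, w+1$; this simultaneously confirms the membership $\wr_f \in \wW_w^\Gamma$ asserted just before the proposition.
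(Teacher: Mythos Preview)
Your proof is correct and follows essentially the same route as the paper's. The paper decomposes $\tf(A)(z)$ as $H_{z_0}(A)(z)+\int_{z}^{z_0} f|A(t) (t-z)^w\, dt + a_0(f|A)\int_0^{z}(t-z)^w\, dt$ for an arbitrary $z_0\in\H$, then subtracts the analogous expression for $\tf|S$ (with $Sz_0$ in place of $z_0$) and observes that the middle integrals cancel after the change of variables $t\mapsto St$; you simply take $z_0=z$ from the start, so that the middle integral is absent and the paper's decomposition reduces immediately to your auxiliary identity $H_z(A)(z)=\tf(A)(z)-\rho_f^0(A)(z)$. This is a mild streamlining rather than a different argument, and your discussion of why the diagonal specialization $z_0=X$ is legitimate is adequate.
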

\begin{proof} Fixing $z_0\in \H$ and decomposing the integral in \eqref{a1} as
$\int_{z_0}^{i\infty}+\int_{z}^{z_0}$ we have
\[\tf(A)(z)=H_{z_0}(A)(z)+\int_{z}^{z_0} f|A(t) (t-z)^w dt +a_0(f|A)\int_0^{z}(t-z)^w dt.
\]
Using the same relation for $\tf(AS^{-1})(Sz)j(S, z)^w$ with $Sz_0$ in
place of $z_0$ we obtain, after a change of variables $u=St$ in the first integral above
\begin{equation*}
\begin{aligned}
 (\tf|1-S) (A)(z)=& H_{z_0}(A)(z)-H_{Sz_0}(AS^{-1})|S (z)+\\
&\quad +a_0(f|A)\int_0^{z} (t-z)^w dt -a_0(f|AS^{-1})\int_0^{Sz} (t-Sz)^w j(S,z)^w dt.
\end{aligned}
\end{equation*}
Computing the last integrals and comparing with \eqref{a11} yields the conclusion. 
\end{proof}

We now determine the exact relationship between $\wW_w^\Gamma$ and $W_w^\Gamma$. For $\wP\in
\wW_w^\Gamma$ write $\wP=P+P_0$ where $P\in V_w^\Gamma$ and $P_0(A)=c_A X^{w+1}+d_A X^{-1}$
for $A\in \Gamma\backslash\Gamma_1$.
From $\wP|1+S=0$ we obtain $d_A=c_{AS}$. From $\wP|1+U+U^2=0$, it follows that $P_0|1+U+U^2\in
V_w^\Gamma$, which implies that $c_A=c_{AT}$ for all $A\in \Gamma_1$. Therefore we have
$P_0=P^0|1-S$, where $P^0(A)=c_A X^{w+1}$, with $c_A=c_{AT}$. In conclusion, letting 
\begin{equation}\label{a2}
D_w^\Gamma=\{(c_A X ^{w+1})_A | (1-S) :   c_A=c_{AT}=(-1)^w c_{AJ}\in \C \}\subset
\wV_w^\Gamma
\end{equation}
we have a unique decomposition of $\wP\in\wW_w^\Gamma$ as above
\begin{equation}\label{a3}
\wP=P+P^0|(1-S) , \quad P\in V_w^\Gamma, \ P^0|(1-S)\in D_w^\Gamma .
\end{equation} 
For $\wr_f$ this is the decomposition in Proposition \ref{pa1},
since $a_0(f|A)=a_0(f|AT)=(-1)^w a_0(f|AJ)$. As in the proof of Lemma \ref{L7.1}, note that
$\dim D_w^\Gamma$ equals $e_\infty(\Gamma)$ or $e_\infty^\reg(\Gamma)$, depending on whether
$k$ is even or odd respectively. 

When $k=2$ there is an extra relation satisfied by the coefficients of $P^0$ in \eqref{a3}.
Letting $P(A)=d_A\in\C$, $P^0(A)=c_A X^{w+1}$, the period relations now imply that 
\[
d_A+d_{AU}+d_{AU^2}+2(c_A+c_{AU}+c_{AU^2})=0, \text{ with } d_A+d_{AS}=0, c_A=c_{AT}
 \]  
for all $A\in \Gamma\backslash \Gamma_1$. The relation $d_A+d_{AS}=0$ implies $\sum_A d_A=0$, and then the first
relation above implies that $\sum_{A} c_A=0$ as well, where the sum is over a complete system of representatives
for $\Gamma\backslash\Gamma_1$. From Proposition \ref{pa1} it follows that $\sum_A a_0(f|A)=0$ for all $f\in
M_2(\Gamma)$.

\begin{prop}\label{pa2} a) If $k \ge 3$ there is an exact sequence
\[
0 \rightarrow W_w^\Gamma \rightarrow \wW_w^\Gamma \rightarrow D_w^\Gamma\rightarrow 0
\]
where the first map is inclusion, and the second is the map $\wP\rightarrow P^0|1-S$ defined
above.

b) If $k=2$ there is an exact sequence
\[
0 \rightarrow W_w^\Gamma \rightarrow \wW_w^\Gamma \rightarrow D_w^\Gamma\rightarrow
\C\rightarrow 0
\]
where the last map takes $P^0|1-S\in D_w^\Gamma$ with $P^0(A)=c_A X^{w+1}$ to $\sum_{A\in
\Gamma\backslash \Gamma_1} c_A$.
\end{prop}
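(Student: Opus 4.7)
The plan is to construct the connecting map $\phi\colon \wW_w^\Gamma \to D_w^\Gamma$ via the unique decomposition \eqref{a3}, by setting $\phi(\wP) = P^0|(1-S)$, and then to verify exactness spot by spot. The preceding discussion already shows that $\phi$ takes values in $D_w^\Gamma$ in part (a), and in $\ker(D_w^\Gamma \to \C)$ in part (b) thanks to the extra identity $\sum_A c_A = 0$ derived there for $k=2$.

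Exactness at $W_w^\Gamma$ and $\wW_w^\Gamma$ is purely formal. The inclusion $W_w^\Gamma \hookrightarrow \wW_w^\Gamma$ is the natural one coming from $V_w \subset \wV_w$, and $\phi$ visibly vanishes on it. Conversely, $\phi(\wP)=0$ together with the uniqueness of \eqref{a3} forces $\wP = P \in V_w^\Gamma$, and then the defining relations of $\wW_w^\Gamma$ collapse to $P|(1+S)=P|(1+U+U^2)=0$, so $P \in W_w^\Gamma$.

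The main step is surjectivity of $\phi$. Here Proposition \ref{pa1} does the heavy lifting: for every $f\in M_k(\Gamma)$ we have $\wr_f \in \wW_w^\Gamma$ and $\phi(\wr_f) = \rho_f^0|(1-S)$, whose $X^{w+1}$-coefficients are $(-1)^w a_0(f|A)/(w+1)$. So surjectivity of $\phi$ reduces to the surjectivity of the constant-term map $f \mapsto (a_0(f|A))_A$ on $M_k(\Gamma)$ onto the prescribed target. Since cusp forms kill all constant terms, this map factors through the Eisenstein subspace $\EE_k(\Gamma)$, on which it is injective. The classical dimension formula gives $\dim\EE_k(\Gamma)$ equal to $e_\infty(\Gamma)$ when $k\ge 3$ is even, $e_\infty^\reg(\Gamma)$ when $k\ge 3$ is odd, and $e_\infty(\Gamma)-1$ when $k=2$. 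A short computation from \eqref{a2}, carried out exactly as in the proof of Lemma \ref{L7.1} but noting that the map $P^0\mapsto P^0|(1-S)$ is injective on the $X^{w+1}$-indexing data (so without a $-1$ in the $k=2$ case), yields $\dim D_w^\Gamma = e_\infty(\Gamma)$ for $k$ even and $e_\infty^\reg(\Gamma)$ for $k$ odd. These numbers match the targets in parts (a) and (b), so injectivity plus matching dimensions forces bijectivity.

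For part (b) it remains to note that the final map $D_w^\Gamma\to\C$ is surjective: the element with $c_A=1$ for every $A$ (allowed since $w$ is even, so $c_A = c_{AT} = (-1)^w c_{AJ}$) maps to $|\og\backslash\og_1|\ne 0$. The main technical obstacle throughout is the identification of the analytic constraint on constant Fourier coefficients of elements of $\EE_k(\Gamma)$ with the algebraic description of $D_w^\Gamma$; for $k=2$ this is a reassuring sanity check, since the author has already derived the relation $\sum_A a_0(f|A)=0$ purely from the algebraic condition $\wr_f\in\wW_w^\Gamma$ prior to the statement.
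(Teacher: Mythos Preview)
Your proof is correct and follows essentially the same strategy as the paper: both reduce surjectivity of $\wW_w^\Gamma \to D_w^\Gamma$ (or onto the kernel of $D_w^\Gamma\to\C$ when $k=2$) to a statement about constant terms of Eisenstein series, invoking Proposition~\ref{pa1}. The only minor difference is tactical: the paper proves surjectivity by exhibiting an explicit basis $\{E_k^{\cc}\}$ of $\EE_k(\Gamma)$ indexed by (regular, if $k$ odd) cusps, with $a_0(E_k^{\cc}|A)$ supported on the single cusp class $\cc$, so that these hit an obvious basis of $D_w^\Gamma$; you instead observe that the constant-term map is injective on $\EE_k(\Gamma)$ and appeal to the dimension formula $\dim\EE_k(\Gamma)=\dim D_w^\Gamma$ (resp.\ $\dim D_w^\Gamma-1$ for $k=2$). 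Both arguments are equally short and rest on the same classical facts about Eisenstein series; the paper's version is marginally more constructive, yours marginally more economical.
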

\begin{proof} Exactness at $\wW_w^\Gamma$ follows from the definition. If $k\ge 3$,
surjectivity of the second map 
follows from Proposition \ref{a1}, and the fact that there is a basis of Eisenstein series
$E_k^{\cc}\in M_k(\Gamma)$ for $\cc=[A_\cc]$ a complete system of representatives for the 
(regular if $k$ is odd) cusps in $\Gamma\backslash \Gamma_1/\Gamma_{1\infty}$, such that
$a_0(E_k^{\cc}|A)=(-1)^w a_0(E_k^{\cc}|AJ)$ equals 1 if $[A]^+=[A_\cc]^+$, and 0 if $[A]\ne\cc$
 (see Definition \ref{d_cusp} for notation). If $k=2$ the Eisenstein subspace of
$M_2(\Gamma)$ is spanned by
modular forms
$f$ which are nonzero only at a fixed pair of nonequivalent cusps and are zero at other cusps,
and such that $\sum_{A\in \Gamma\backslash\Gamma_1} a_0(f|A)=0$.  Their images in $D_w^\Gamma$
span the kernel of the last map, proving exactness at $D_w^\Gamma$.
\end{proof}
The previous proposition shows that $\dim \wW_w^\Gamma=2 \dim M_k(\Gamma)$. From the
proof we conclude that there is a direct sum decomposition 
\begin{equation}\label{7.6}
\wW_w^\Gamma=W_w^\Gamma\oplus\wE_w^\Gamma
\end{equation}
where $\wE_w^\Gamma$ is the image of the Eisenstein subspace
$\EE_k(\Gamma)\subset M_k(\Gamma)$ under the map $f\rightarrow \wr_f$. 

The pairing $\{\cdot, \cdot \}$ extends to a pairing on $\wW_w^\Gamma \times\wW_w^\Gamma$, by 
decomposing $\wP,\wQ\in \wW_w^\Gamma$ as in \eqref{a3} and setting:
\begin{equation}\label{a7}
 \{\wP,\wQ \}=\lla P|T-T^{-1}, Q \rra +\lla 2P^0|T-T^{-1},Q \rra+  \lla P, 2 Q^0|T^{-1}-T
\rra+I_k(P^0, Q^0),
\end{equation}
where $I_k(P^0, Q^0)=0$ if $k$ even and $I_k(P^0, Q^0)=\frac{6 (k-1)
}{k[\Gamma_1:\Gamma]}\sum_{A\in \Gamma\backslash \Gamma_1}{c_A c_A'}$ if $k$ is odd, where $P^0(A)=c_A
X^{w+1}$, $Q^0(A)=c_A' X^{w+1}$.  

Since  $P^0|T-T^{-1}\in V_w^\Gamma$ this pairing is well-defined, and it is easily checked
that it behaves as in \eqref{conj1} under the action of $\epsilon$ defined as in \eqref{eps}.
We will show below that this definition is natural for two reasons:
Haberland's formula generalizes to arbitrary modular forms, if the Petersson product is
extended in a natural way to all modular forms, and this pairing is Hecke equivariant
for $\Gamma=\Gamma_1(N)$, with the same action of Hecke operators on $\wW_w^\Gamma$ as on
$W_w^\Gamma$. 

Recall that on $W_w^\Gamma$ the pairing $\{\cdot, \cdot \}$ is degenerate, its radical being
$C_w^\Gamma$ (Lemma \ref{l4.4}). We now show that the extended pairing is nondegenerate on
$\wW_w^\Gamma$, more precisely that the dual of $C_w^\Gamma$ inside $\wW_w^\Gamma$ is the space
$\wE_w^\Gamma$. 

\begin{prop}\label{l7.3}
 $\mathrm{(a)}$  Let $P=P'|1-S \in C_w^\Gamma$ and $\wQ=Q+Q^0|1-S \in \wW_w^\Gamma$, and let 
$P'(A)=c_A'$, $Q^0 (A)=(-1)^w c_A \frac{X^{w+1}}{w+1}$ for $A\in
\Gamma\backslash\Gamma_1$ (so that $c_A'=c_{AT}'=(-1)^w c_{AJ}',
c_A=c_{AT}=(-1)^w c_{AJ}$). Then 
\[
\{ P, \wQ\}=-\frac{6}{[\og_1:\og]} \sum_{A\in \og\backslash \og_1} c_A' c_A  .
\]
$\mathrm{(b)}$   The pairing $\{ \cdot, \cdot\}$ is nondegenerate on $\wW_w^\Gamma$, and the dual of $C_w^\Gamma$
is $\wE_w^\Gamma$. 
\end{prop}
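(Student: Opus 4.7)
The plan is to prove part (a) by a direct computation from the definition \eqref{a7} of the extended pairing, and then to deduce part (b) by combining (a) with the known radical structure of $\{\cdot,\cdot\}$ on $W_w^\Gamma$.

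For part (a), since $P \in C_w^\Gamma \subset W_w^\Gamma$ has $P^0 = 0$ in the decomposition \eqref{a3}, the definition \eqref{a7} collapses to
\[
\{P, \wQ\} = \lla P|(T-T^{-1}), Q\rra + \lla P, 2Q^0|(T^{-1}-T)\rra.
\]
The second term I would compute explicitly. Since $c_{AT} = c_A$ one has $Q^0|(T^{-1}-T)(A) = \frac{(-1)^w c_A}{w+1}[(X-1)^{w+1} - (X+1)^{w+1}]$, which lies in $V_w$. Pairing with $P(A) = c'_A - c'_{AS^{-1}}X^w$ via \eqref{3.3}, only the $X^0$ and $X^w$ coefficients of the polynomial difference contribute; summing over $A$ yields $-\frac{4}{[\og_1:\og]}\sum c'_A c_A$, plus (only when $k$ is even) a cross-term $\frac{4}{(w+1)[\og_1:\og]}\sum c_A c'_{AS^{-1}}$ arising from the nonzero constant coefficient of $(X-1)^{w+1} - (X+1)^{w+1}$.

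For the first term, identity \eqref{4.8} together with $P'|(T-1) = 0$ gives $P|(T-T^{-1}) = P'|(1+TS+ST^{-1})(1-S)$. The period relations $\wQ|(1+S) = 0$ and $Q^0|(1-J) = 0$ force $Q|S = -Q$, so combining with $\Gamma_1$-invariance of $\lla\cdot,\cdot\rra$ and the identity $U^2 = ST^{-1}$, the first term reduces to $2\lla P'|(1+U+U^2), Q\rra$; then $\wQ|(1+U+U^2) = 0$ expresses it as $-2\lla P', Q^0|(1-S)(1+U+U^2)\rra$. The main obstacle is the subsequent evaluation: each of the six summands in $(1-S)(1+U+U^2)$ applied to $X^{w+1}$ produces a rational function outside $\wV_w$ (for instance $(X^{w+1})|U = (X-1)^{w+1}/X$), and one must verify that the full sum lies in $V_w^\Gamma$, which is forced by the relations $c_{AT} = c_A$, $c_{AJ} = (-1)^w c_A$, and $U^3 = J$. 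A careful explicit calculation exploiting these symmetries to cancel the poles shows the first term contributes $-\frac{2}{[\og_1:\og]}\sum c'_A c_A$ together with an extra piece that exactly cancels the cross-term from the second (when $k$ is even); for $k$ odd no cross-term appears on either side. Adding the two pieces gives $\{P,\wQ\} = -\frac{6}{[\og_1:\og]}\sum c'_A c_A$ uniformly in the parity of $k$.

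For part (b), the identity of part (a) identifies the restriction of $\{\cdot,\cdot\}$ to $C_w^\Gamma \times \wE_w^\Gamma$ with the bilinear form $(c',c) \mapsto -\frac{6}{[\og_1:\og]}\sum_A c'_A c_A$ on two copies of the $\dim \EE_k(\Gamma)$-dimensional space of tuples $(c_A)$ satisfying $c_{AT} = c_A = (-1)^w c_{AJ}$; this form is manifestly a perfect pairing. Combining with Lemma \ref{l4.4} and Theorem \ref{thm_main}, which together state that the radical of $\{\cdot,\cdot\}$ on $W_w^\Gamma$ equals $C_w^\Gamma$ and the induced form on $W_w^\Gamma/C_w^\Gamma$ is non-degenerate, I would then run a radical analysis on the decomposition $\wW_w^\Gamma = W_w^\Gamma \oplus \wE_w^\Gamma$ of \eqref{7.6}: if $\wP = P + P_E$ is in the radical with $P \in W_w^\Gamma$, $P_E \in \wE_w^\Gamma$, then pairing against $C_w^\Gamma$ (where $P$ contributes nothing by Lemma \ref{l4.4}) forces $P_E = 0$ via the perfect pairing; pairing next against $W_w^\Gamma$ forces $P \in C_w^\Gamma$; and a final pairing against $\wE_w^\Gamma$ forces $P = 0$. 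This establishes nondegeneracy on $\wW_w^\Gamma$ and exhibits $\wE_w^\Gamma$ as the pairing-dual of $C_w^\Gamma$.
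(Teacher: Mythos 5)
Your proposal is essentially correct, and for part (a) it starts exactly as the paper does: kill the $P^0$ and $I_k$ terms, compute $\lla P,2Q^0|(T^{-1}-T)\rra$ directly, and use \eqref{4.8} with $P'|(1-T)=0$ plus $Q|S=-Q$ and $\wQ|(1+U+U^2)=0$ to turn the first term into $-2\lla P',Q^0|(1-S)(1+U+U^2)\rra$. Where you diverge is the endgame. The paper does \emph{not} expand the six summands and chase poles: it replaces $(1-S)(1+U+U^2)$ by $(1-T^{-1})(1+U+U^2)$, adds the two terms, and observes that in $\Z[\og_1]$ one has $(1-T^{-1})(1+U+U^2)+(T-T^{-1})(1-S)=2(1-T^{-1})+(T-1)+\big[Q^0|(T^{-1}-1)S\big]|(1-T^{-1})$-type error terms that pair to zero against $P'$ because $P'|(1-T)=0$. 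Every intermediate object is then manifestly in $V_w^\Gamma$ (only $Q^0|(1\pm T^{\mp 1})$ ever appears), and the whole identity reduces to reading off the $X^w$-coefficient $3(w+1)$ of $X^{w+1}-2(X-1)^{w+1}+(X+1)^{w+1}$. Your route instead evaluates the two terms separately; I checked that your asserted values are right --- the second term is $-\tfrac{4}{[\og_1:\og]}\sum c'_Ac_A$ plus the stated cross-term for $k$ even, and the first term is $-\tfrac{2}{[\og_1:\og]}\sum c'_Ac_A-\tfrac{2}{(w+1)[\og_1:\og]}\sum c_A\big(c'_{AU}+(-1)^wc'_{AU^2}\big)$, whose extra piece cancels the cross-term because $c'_{ATS}$ and $c'_{AST^{-1}}$ both reduce to $c'_{AS}$ under the substitutions $A\mapsto AT^{\pm1}$ and $c'_{BT}=c'_B$. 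But this cancellation \emph{is} the entire content of the lemma, and in your write-up it is asserted ("a careful explicit calculation shows") rather than performed; note also that your illustrative example is off ($(X-1)^{w+1}/X$ actually lies in $\wV_w$ --- the genuine poles come from the $U^2$ and $SU$ summands, at $X=1$). If you take this route you must actually carry out that computation, or better, adopt the paper's grouping, which makes it a one-line coefficient extraction.

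For part (b), your basis-free radical argument is a clean equivalent of the paper's block-matrix proof and uses only the three inputs you cite. The one soft spot is $k=2$: the two sides of the pairing are \emph{not} two copies of the same $\dim\EE_k(\Gamma)$-dimensional tuple space. The tuple space $\{c_{AT}=c_A=c_{AJ}\}$ has dimension $e_\infty(\Gamma)$, while $\dim\EE_2(\Gamma)=e_\infty(\Gamma)-1$; the $C_w^\Gamma$-side is tuples modulo constants and the $\wE_w^\Gamma$-side is tuples with $\sum_A c_A=0$, and perfectness of $(c',c)\mapsto\sum_A c'_Ac_A$ between \emph{these} two spaces needs a short separate check (the paper does this with an explicit diagonal basis weighted by cusp widths). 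For $k>2$ your identification is correct as stated.
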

\begin{proof} 
(a) As in the proof of Lemma \ref{l4.4}, we use the formal relation \eqref{4.8}, together
with the relation $(1-S)(1+U+U^2)=(1-T^{-1})(1+U+U^2)$ and the $\Gamma_1$ invariance of the pairing $\lla \cdot
,\cdot \rra$:
\begin{equation*}
\begin{aligned}
\{P,\wQ\}&=\lla P'|(1-S)(T-T^{-1}), Q \rra+2\lla P', Q^0|(T^{-1}-T)(1-S) \rra \\
 &= 2\lla P', Q|1+U+U^2  \rra +  2\lla P', Q^0|(T^{-1}-T)(1-S) \rra \\
 &=-2\lla P', Q^0|[(1-T^{-1})(1+U+U^2)+(T-T^{-1})(1-S)] \rra \\
 &=-2\lla P', Q^0|[2(1-T^{-1})+T-1 ]\rra \\
 &=-\frac{6}{[\og_1:\og]} \sum_A c_A' c_A. 
\end{aligned}
\end{equation*}

(b) We choose a basis of $\wW_w^\Gamma$ by concatenating bases for $C_w^\Gamma,
\rho^-(S_k(\Gamma))+ \rho^+(S_k(\Gamma)), \wE_w^{\Gamma}$ in this order. 
The block form matrix of the pairing $\{\cdot , \cdot\}$ with respect to this basis is
\[\left(\begin{smallmatrix} 0 & 0& \mathcal{A} \\0 & B & 0 \\(-1)^{w+1}\mathcal{A}^t & 0& C
    \end{smallmatrix}\right),
\]
so it is enough to show that $\A$ is nonsingular ($B$ is nonsingular by Theorem
\ref{thm_main}). When $k>2$ this is obvious from part (a). For $k=2$, we fix a cusp $\cc_0$ in
$\Gamma\backslash\Gamma_1/\Gamma_{1\infty}=\{\cc_0, \cc_1, \ldots, \cc_n\}$ and we let a basis
of $C_w^\Gamma$ consist of $P_i$, $1\le i \le n$, as in the statement of the lemma, with the
constants $c_{iA}'=1$ if $[A]\in \cc_i$, and $c_{iA}'=0$ otherwise (note that
$P_0=-\sum_{i=1}^nP_i$). Letting $l_i=\#\{A\in \og\backslash\og_1: [A]\in \cc_i \}$ (the width
of the cusp $\cc_i$),  we take a basis of $\wE_w^\Gamma$ to consist of $\wQ_i$ as in the
statement, $1\le i \le n$, with $c_{iA}=1$ if $[A]\in \cc_i$, $c_{iA}=-\frac{l_i}{l_0}$ if
$[A]\in \cc_0$, and $c_{iA}=0$ otherwise. With respect to this basis the matrix $\A$ is
diagonal, so the pairing is nondegenerate. 
\end{proof}
Assume now that $\Gamma$ is normalized by $\epsilon$. Since
the action of $\epsilon$  given by \eqref{eps} preserves $\wW_w^\Gamma$ and $ D_w^\Gamma$,
passing to the $\pm 1$ eigenspaces in Proposition \ref{pa2} gives exact sequences 
\begin{equation}\label{7.7}
0 \rightarrow (W_w^\Gamma)^\pm \rightarrow (\wW_w^\Gamma)^\pm \rightarrow
(D_w^\Gamma)^\pm\rightarrow \C \rightarrow 0,   
\end{equation}
where the last map is nontrivial only if $k=2$ and the sign is minus, when it is defined in
Proposition \ref{pa2} b) (when $k=2$ and $P^0|1-S \in (D_w^\Gamma)^+$ with $ P^0(A)=c_A
x^{w+1}$, then $c_A=-c_{A'}$ and $\sum_A c_A=0$ automatically).  
From \eqref{cw} and \eqref{a2} we see that $\dim (D_w^\Gamma)^+=\dim (C_w^\Gamma)^-$ for all
$k$; $\dim (D_w^\Gamma)^-=\dim (C_w^\Gamma)^+$  for $k\ge 3$; and  $\dim
(D_w^\Gamma)^- -1=\dim(C_w^\Gamma)^+$  for $k=2$. Combined with the Eichler-Shimura
isomorphism \eqref{7.1} and Lemma \ref{L7.1}, this implies that $\dim (\wW_w^\Gamma)^\pm =\dim
M_k(\Gamma)$.                      

The next Proposition can be seen as a extension of the Eichler-Shimura isomorphism \eqref{7.1}
to the entire space of modular forms.
\begin{prop}\label{p7.4} $\mathrm{(a)}$ Assume that $k, \Gamma$ are such that the extended Petersson scalar
product on $M_k(\Gamma)$ defined in $\S$\ref{s7.1} is nondegenerate (see Remark \ref{r7.5}).
Then the maps $\wr^\pm:
M_k(\Gamma) \rightarrow (\wW_w^\Gamma)^\pm$, $f\rightarrow \wr_f^\pm$, are isomorphisms. 

$\mathrm{(b)}$ Assume that $(C_w^\Gamma)^-=0$ (for example $\Gamma=\Gamma_0(N)$ with $N$ as in
Proposition \ref{prop7.2}). Then $\wr^-$ is an isomorphism. 
\end{prop}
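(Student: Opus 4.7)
The plan is to combine the dimension equality $\dim(\wW_w^\Gamma)^\pm=\dim M_k(\Gamma)$ established just above with two distinct injectivity arguments, one per part. Since the domain and target are finite-dimensional of the same dimension, in both parts it suffices to show that $\wr^\pm$ has trivial kernel.

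For part (a) I would invoke the extension of Haberland's formula promised in $\S$\ref{s7.1}: for all $f,g\in M_k(\Gamma)$ and admissible signs $\kappa_1,\kappa_2$, one should have
\[
3C_k\,(f,g)_{\mathrm{ext}} = \{\wr_f^{\kappa_1},\overline{\wr_g^{\kappa_2}}\}.
\]
If $\wr_f^{\kappa_1}=0$ for some $f\in M_k(\Gamma)$, the right-hand side vanishes for every $g\in M_k(\Gamma)$, so $(f,\cdot)_{\mathrm{ext}}\equiv 0$ on $M_k(\Gamma)$. The nondegeneracy hypothesis then forces $f=0$. This yields injectivity of both $\wr^+$ and $\wr^-$, and the dimension count finishes the proof.

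For part (b) I would exploit the direct sum decomposition \eqref{7.6}, namely $\wW_w^\Gamma = W_w^\Gamma\oplus\wE_w^\Gamma$. The $\epsilon$-stability of $\wE_w^\Gamma=\wr(\EE_k(\Gamma))$ follows from the $\epsilon$-stability of $\EE_k(\Gamma)$ together with the analogue of \eqref{2.1} for $\wr$, so passing to minus eigenspaces gives
\[
(\wW_w^\Gamma)^- = (W_w^\Gamma)^-\oplus(\wE_w^\Gamma)^-.
\]
Since $\wr_f=\rho_f$ for cusp forms by Proposition \ref{pa1}, the map $\wr^-$ respects the splitting $M_k(\Gamma)=S_k(\Gamma)\oplus\EE_k(\Gamma)$. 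Under the hypothesis $(C_w^\Gamma)^-=0$, Theorem \ref{thm2.1} promotes $\rho^-$ to an isomorphism $S_k(\Gamma)\to(W_w^\Gamma)^-$. Comparing with the total dimension $\dim(\wW_w^\Gamma)^- = \dim M_k(\Gamma)$ then forces $\dim(\wE_w^\Gamma)^-=\dim\EE_k(\Gamma)$, which upgrades the tautological surjection $\EE_k(\Gamma)\twoheadrightarrow(\wE_w^\Gamma)^-$ to an isomorphism. Assembling the two pieces yields the isomorphism $\wr^-:M_k(\Gamma)\to(\wW_w^\Gamma)^-$.

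The main obstacle is effectively deferred to $\S$\ref{s7.1}: for part (a), everything rests on producing a natural extension of both the Petersson product and Haberland's formula to all of $M_k(\Gamma)$, and on identifying precisely for which $(k,\Gamma)$ the extended product is nondegenerate (which is where the $k=2$ pathology in Remark \ref{r7.5} will enter). Part (b) is essentially formal once the $\epsilon$-equivariance of the map $f\mapsto\wr_f$ on the Eisenstein subspace has been checked by a direct Eichler-integral computation paralleling the cuspidal identity \eqref{2.1}.
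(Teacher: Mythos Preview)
Your argument for part (a) is exactly the paper's: reduce to injectivity via the dimension equality $\dim(\wW_w^\Gamma)^\pm=\dim M_k(\Gamma)$, then apply the extended Haberland formula (Theorem \ref{pa4}(c)) and the nondegeneracy hypothesis.

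For part (b) your route is correct but differs from the paper's. The paper argues injectivity directly without splitting along $M_k=S_k\oplus\EE_k$: from the equality $\dim(D_w^\Gamma)^+=\dim(C_w^\Gamma)^-$ established just before the proposition, the hypothesis forces $(D_w^\Gamma)^+=0$. Now suppose $\wr_f^-=0$. Decomposing $\wr_f=\rho_f+\rho_f^0|(1-S)$ as in \eqref{a3} and taking minus parts, the $D_w^\Gamma$-component $(\rho_f^0|(1-S))^-$ vanishes; since its plus part already lies in $(D_w^\Gamma)^+=0$, one gets $\rho_f^0|(1-S)=0$, hence $a_0(f|A)=0$ for all $A$ and $f$ is cuspidal. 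Then $\rho_f^-=\wr_f^-=0$ and Theorem \ref{thm_main} gives $f=0$. Your approach via the direct sum \eqref{7.6} and dimension counting is arguably more structural, but it hinges on the $\epsilon$-stability of $\wE_w^\Gamma$ (the analogue of \eqref{2.1} for $\wr$), which you correctly flag as needing a separate check; the paper's argument sidesteps this by working only with the manifestly $\epsilon$-stable space $D_w^\Gamma$.
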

\begin{remark}\label{r7.5}
It is shown in \cite{PP12} that the extended Petersson product is nondegenerate for
$\Gamma_1(N)$ (and therefore also for $\Gamma_0(N)$) when $k>2$. When $k=2$ the extended
Petersson product is nondegenerate for $\Gamma_1(p)$ or $\Gamma_0(p)$ 
with $p$ prime, while it is degenerate for $\Gamma_0(N)$ with $N$ squarefree with at least two
prime factors. This implies that in the latter case the map $\wr^+$ is not an isomorphism;
indeed, part b) shows that $\wr^-$ is an isomorphism, and if both $\wr^{\pm}$ were
isomorphisms, then the
Petersson product would be nondegenerate by Theorem \ref{pa4} c) below, since the pairing
$\{\cdot, \cdot\}$ is nondegenerate. 
\end{remark}
\begin{proof} a) Since the dimensions of the spaces are equal, we only have to prove
injectivity.
If $\wr_f^\pm=0$, it follows from Theorem \ref{pa4} c) that $f=0$, when
 the extended Petersson product on $M_k(\Gamma)$ is nondegenerate. 

b) If $(C_w^\Gamma)^-=0$, then $(D_w^\Gamma)^+=0$ as well. Assuming  $\wr_f^-=0$ for $f\in
M_k(\Gamma)$, it follows that $\rho_f^0=0$ so $f$ is a cusp form, hence $f=0$ by Theorem
\ref{thm_main}. 
\end{proof}

\subsection{An example} As an example, we check directly that the map
$\wr^+$ is not an isomorphism for $k=2$ and $\Gamma=\Gamma_0(6)$. As explained in Remark
\ref{r7.5}, this gives an alternative proof that the extended Petersson product is degenerate
in this case, in agreement with \cite{PP12}. 

As representatives $A_j$ for $\Gamma\backslash \Gamma_1$ we take the matrices
\[
 ST^{-i}S\{I, U^2, U\},  \quad  i=0,1,2,3 
\] 
in this order (namely $(A_1,\ldots, A_{12} )= (I, U^2, U, ST^{-1}S, ST^{-1}S U^2, \ldots,
ST^{-3}S U )$), obtained from the set of representatives provided by the command
`CosetRepresentatives' in MAGMA. There are four cusps $\cc_i\in
\Gamma \backslash \Gamma_1/\Gamma_{1\infty} $, and we have $\cc_1=[A_1]$,
$\cc_2=[A_9]=[A_{12}]$,
$\cc_3=[A_6]=[A_7]=[A_{11}]$, while the remaining six matrices are in the class $\cc_4$.   

Since there are no cusp forms of weight two on $\Gamma_0(6)$, we have $W_w^\Gamma=C_w^\Gamma$.
The latter space is spanned by polynomials $P_i$ supported at the class $\cc_i$, $i=1,2,
3$, namely $P_i=(c_A)_A|1-S$ with $c_A=c_{AT}$ and $c_A=1$ if $[A]=\cc_i$, $c_A=0$ otherwise.  
We identify a polynomial $P\in C_w^\Gamma$ with a vector ${\bf d}=(d_i)\in \C^{12}$ with
$P(A_i)=d_i$. Let $\sigma \in \SS_{12}$ be the permutation such that $A_j S=A_{\sigma j}$. We
have $\sigma= (3, 4, 1, 2, 7, 10, 5, 12, 11, 6, 9, 8), $
and it follows that the vectors ${\bf d}$ corresponding to the polynomials $P_1$, $P_2$, $P_3$
are given respectively by (the entries not specified are equal to 0):
\[
 d_1=1, d_3=-1;\quad d_9=d_{12}=1, d_{11}=d_{8}=-1; \quad d_6=d_7=d_{11}=1,
d_{10}=d_5=d_{9}=-1.
\]
Therefore in order to decompose a polynomial $P\in C_w^\Gamma$ with respect to the basis
$\{P_1, P_2, P_3\}$ it is enough to know $d_1$, $d_{12}$ and $d_9$.

The space $M_2(\Gamma_0(6))$ is spanned by the Eisenstein series $E_2^t(z)=E_2(z)-tE_2(tz)$,
for $t=2,3,6$, where $E_2(z)=-\frac{1}{24}+\sum_{n\ge 1} \sigma_1(n) e^{2\pi i nz}$.  Since
$(D_w^\Gamma)^+=(C_w^\Gamma)^-=0$, we have $\wr^+(E_2^t)=\rho^+(E_2^t)\in C_w^\Gamma$. Letting
$\rho(E_2^t)(A_i)=e_i$, $\rho(E_2^t)(A_i')=e_i'$, we have
$\rho^+(E_2^t)(A_i)=\frac{e_i+e_i'}{2}=d_i$, where $e_j'=e_{\tau j}$ with $\tau=(1,
4, 3, 2, 10, 7, 6,  8,  9, 5, 11,12)\in \SS_{12}$. 

We now determine the constants $d_i$ for
each of the three Eisenstein series. Taking into account that $L(s, E_2)=\zeta(s)\zeta(s-1)$
and $L(s, E_2^t)=\zeta(s)\zeta(s-1)\big(1-\frac{1}{t^{s-1}}\big)$, 
the constant $d_1$ is given by \eqref{e7.2}: 
$$ d_1= C \ln(t), \quad  \text{ where }C= -\frac{\zeta(0)}{2\pi i}.$$

Since $E_2^2\in M_2(\Gamma_0(2))$, and $A_6,A_7, A_{11}\in \Gamma_0(2) $ it follows that
$e_1=e_6=e_7=e_{11}$. We also have $e_6'=e_7$, $e_{11}'=e_{11}$, hence
$d_1=d_6=d_7=d_{11}$. We obtain $\rho^+(E_2^2)=C \ln(2)(P_1+P_3).$

Since $E_2^3\in M_2(\Gamma_0(3))$, and $A_9,A_{12}\in \Gamma_0(3) $, we have $d_1=d_9=d_{12}$
and $\rho^+(E_2^3)=C \ln(3)(P_1+P_2). $ 

For $E_2^6$, in order to determine $d_9, d_{12}$ we easily find 
\comment{
so writing $E_2^6=E_2^3+ 3 E_2^2 |
\big(\begin{smallmatrix} 3 & 0 \\ 0 & 1 \end{smallmatrix}\big),$  it follows that 
\[E_2^6|A_9=E_2^3+ 3 (E_2^2 |S)|
\big(\begin{smallmatrix} 3 & 0 \\ 0 & 1\end{smallmatrix}\big), \quad E_2^6|A_{12}=E_2^3+ 3
(E_2^2 |ST)|
\big(\begin{smallmatrix} 3 & 0 \\ 0 & 1\end{smallmatrix}\big).
\]
From the transformation properties of $E_2$ under $\Gamma_1$ we have
$E_2|S(z)=E_2(z)-\frac{1}{2}E_2\big(\frac{z}{2}\big)$, so
$E_2^6|A_9(z)=E_2(z)-\frac{3}{2}E_2\big(\frac{3z}{2}\big)$ and
$E_2^6|A_{12}=E_2(z)-\frac{3}{2}E_2\big(\frac{3z+1}{2}\big)$, obtaining 
}
\[L(s, E_2^6|A_9)=\zeta(s)\zeta(s-1) \big(1-\textstyle\frac{2^{s-1}}{3^{s-1}}\big), \ \ L(s,
E_2^6|A_{12})=\zeta(s)\zeta(s-1)
\big(1-3^{2-s}+ \frac{2^{s-1}}{3^{s-1}}+6^{1-s}\big) .
\]
By \eqref{e7.2} it follows $d_9=C (\ln(3)-\ln(2))$ , $d_{12}= C \ln 3$, so that
\[ 
\rho^+(E_2^6)=C( P_1\ln 6 +P_2\ln 3  + P_3\ln 2 )= \rho^+(E_2^2)+\rho^+(E_2^3) 
\]
concluding that $\wr^+$ is not surjective.

\subsection{Haberland's formula for arbitrary modular forms}\label{s7.1}

The Petersson scalar product of two Eisenstein series of full level is defined by Zagier
in \cite{Z81}. Let
$\F$ be the fundamental domain $\{z\in \H:  |z|\ge 1, |\re\, z |\le 1/2  \}$ for $\Gamma_1$,
and for $T>1$ let $\F_T$ be the truncated domain for which $\im\, z <T$. Since $\sum_A
f|A(z)\ov{g}|A(z) y^k$ is a $\Gamma_1$-invariant, renormalizable function in the sense of 
\cite{Z81}, we can define for $f,g\in M_k(\Gamma)$ 
\begin{equation}\label{petersson}
(f,g)=\frac{1}{[\og_1:\og]}\lim_{T\rightarrow \infty}\sum_A\Big[ \int_{\F_T}
f|A(z)\overline{g|A(z)} y^w  dx dy -\frac{T^{k-1}}{k-1} a_0(f|A)\overline{a_0(g|A) }\Big] 
\end{equation}
where the sum is over a complete system of representatives $A\in\og\backslash \og_1$. As in
\cite{Z81}, it can be shown that the extended Petersson product equals $\res_{s=k} L(s, f,
\overline{g})$ up to a nonzero constant, where $ L(s, f,
\overline{g})=\sum_{n\ge 1} \frac{a_n(f)\overline{a_n(g)}}{n^s} $. Using this fact, we show in
\cite{PP12} that for $\Gamma=\Gamma_1(N)$ the extended Petersson product is nondegenerate 
when $k>2$, while for $k=2$ it may or may not be degenerate.

We have the following generalization of Theorems \ref{thm_hab} and \ref{thm_main}.
\begin{theorem} \label{pa4} Assume $k\ge 2$, and $\Gamma$ is a finite index subgroup of
$\Gamma_1$. Let $f,g\in M_k(\Gamma)$.

a) We have: 
$\ \ 
6 C_k\cdot (f,g) = \{\wr_f, \overline{\wr_g}\},\ \ 
$ where $C_k=-(2i)^{k-1}$. 

b) We have: $\ \{ \wr_f, \wr_g \}=0.$

c) Assuming further that $\Gamma$ is normalized by $\epsilon$, and letting $\kappa_1,
\kappa_2\in\{+,-\}$ as in Theorem \ref{thm_main}: 
$$
3 C_k\cdot (f,g) = \{\wr_f^{\kappa_1}, \overline{\wr_g^{\kappa_2}}\}.
$$
\end{theorem}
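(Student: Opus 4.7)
The plan is to adapt the Stokes-theorem argument of Theorem \ref{thm_hab} to noncuspidal $f,g\in M_k(\Gamma)$, using the modified Eichler integral from \eqref{a1} in place of the cuspidal one and working on compact exhaustions $\D_T\subset\D$ to control the divergences caused by the constant terms of $f|A$ and $g|A$ at the cusps of $\D$.

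For part (a), set
\[
F_A(z,\bar z)=\int_{i\infty}^{z}\bigl[f|A(t)-a_0(f|A)\bigr](t-\bar z)^w\,dt,
\]
which satisfies $\partial_z F_A=[f|A(z)-a_0(f|A)](z-\bar z)^w$ and decays rapidly at $i\infty$. Let $\D_T$ be obtained from $\D$ by cutting off $\Gamma(2)$-cuspidal neighborhoods at ``height $T$'' around each of the four vertices $i\infty,-1,0,1$. Applying Stokes to $F_A\,\overline{g|A}\,d\bar z$ on $\D_T$, summing over $A$, and manipulating the boundary exactly as in Theorem \ref{thm_hab}(a) (cancellation of vertical sides via $z=T^2\tau$, change of variables $z=S\tau$ on the bottom arcs, transformation formula analogous to \eqref{3.2} but now picking up an extra term from the subtracted constant) should produce the pairing $\lla\rho_f|(T-T^{-1}),\overline{\rho_g}\rra$ together with additional contributions coming from the $a_0(f|A)$ and $a_0(g|A)$ pieces. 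Via Proposition \ref{pa1} --- which identifies $\rho_f^0(A)=(-1)^w a_0(f|A)X^{w+1}/(w+1)$ and gives $\wr_f=\rho_f+\rho_f^0|(1-S)$ --- these additional contributions should match term-by-term the three remaining summands in the definition \eqref{a7}, namely $\lla 2\rho_f^0|(T-T^{-1}),\overline{\rho_g}\rra$, $\lla\rho_f,2\overline{\rho_g^0}|(T^{-1}-T)\rra$, and (for odd $k$) $I_k(\rho_f^0,\overline{\rho_g^0})$. On the interior side, the divergent part of $\int_{\D_T}a_0(f|A)\overline{g|A}(z-\bar z)^w\,dz\,d\bar z$ coming from the constant term of $\overline{g|A}$ at each cusp of $\D$ must cancel against the renormalization in \eqref{petersson}, leaving $6C_k[\og_1:\og](f,g)$ in the limit.

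Part (b) uses the fact that the holomorphic 1-form $\tf(A)(z)\,g|A(z)\,dz$ is closed, so $\int_{\partial\D_T}\tf(A)\,g|A\,dz=0$ for every $T$; processing the boundary by the same transformations as in (a), with the corresponding constant-term subtractions for $g$, yields $\{\wr_f,\wr_g\}=0$. Part (c) is then a formal consequence of (a), (b), the $\epsilon$-invariance $\{\wP|\epsilon,\wQ|\epsilon\}=(-1)^{w+1}\{\wP,\wQ\}$ of the extended pairing (immediate from \eqref{a7} and the definition \eqref{eps} of the $\epsilon$-action), and the extension of \eqref{2.1} to $f\in M_k(\Gamma)$ (which reduces to a direct Fourier-coefficient computation at each cusp), following the argument of Theorem \ref{thm_main}.

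The main obstacle is the detailed bookkeeping in part (a). Three independent computations must come out on the nose. First, the divergent contributions at each cusp of $\D$, after substituting $z=Bw$ for $B$ running through a system of representatives for $\Gamma(2)\backslash\Gamma_1$, must assemble to precisely the renormalization $\sum_A T^{k-1}a_0(f|A)\overline{a_0(g|A)}/(k-1)$ appearing in \eqref{petersson}. Second, the boundary terms from the constant-term shifts in $F_A$ and in the modified transformation formula must reproduce the cross pairings $\lla 2\rho_f^0|(T-T^{-1}),\overline{\rho_g}\rra$ and $\lla\rho_f,2\overline{\rho_g^0}|(T^{-1}-T)\rra$. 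Third, and most delicate, for odd $k$ the anomalous diagonal term $I_k(\rho_f^0,\overline{\rho_g^0})=\tfrac{6(k-1)}{k[\og_1:\og]}\sum_A c_Ac_A'$ must emerge from boundary contributions concentrated near the vertices of $\D$; the combinatorial factor $6(k-1)/k$ strongly suggests a residue interpretation tied to the six copies of $\F$ tiling $\D$ and the identification of $-1$ with $1$ under $T^2\in\Gamma(2)$, but confirming this requires careful contour analysis near each corner of $\D_T$.
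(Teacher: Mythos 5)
Your plan is a strategy outline rather than a proof: every step that actually produces the right-hand side of \eqref{a7} is phrased as ``should produce'', ``must match'', ``must emerge'', and none of these computations is carried out. This matters because the places you defer are precisely where the difficulty lives. Two concrete problems. First, your route through the $\Gamma(2)$ domain $\D$ forces you to deal with the blow-up of $f|A$ and $g|A$ at the three finite cusps $-1,0,1$; the renormalization in \eqref{petersson} is adapted to a single truncation at $i\infty$ over $\F_T$, and converting the horoball truncations of $\D_T$ at $-1,0,1$ into that renormalization (with the correct cusp widths) is exactly the complication the actual proof avoids by working on $\F$ instead of $\D$ and using the $H_{z_0}$ functions of \eqref{a10}--\eqref{a11}. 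Second, and more basically, after the change of variables $z=S\tau$ on the bottom arcs the inner integral you would obtain is $\int_0^{i\infty}[f|AS(t)-a_0(f|AS)](t-\overline\tau)^w\,dt$, which \emph{diverges} at $t=0$ whenever $f|ASS=f|AJ$ has a nonzero constant term, since $f|AS(t)\sim a_0(f|AJ)\,t^{-k}$ as $t\to 0$. So the ``extra term from the subtracted constant'' in your modified \eqref{3.2} is not a finite quantity; the period polynomial of a noncuspidal form is defined in the paper by analytic continuation of the $L$-function (equations \eqref{e7.2}--\eqref{a11}), not by a literal integral from $0$ to $i\infty$, and your boundary manipulation never introduces the regularization needed to land on $\rho_f$ and $\rho_f^0$. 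Finally, the odd-$k$ anomaly $I_k(\rho_f^0,\overline{\rho_g^0})$ with constant $\tfrac{6(k-1)}{k[\og_1:\og]}$ is not a term you can leave to a conjectural ``residue interpretation'': in the actual argument it falls out of the explicit evaluation $\la p,p\ra=\tfrac{1+(-1)^w}{k(k-1)}$ with $p=\tfrac{X^{w+1}}{w+1}|(1-T)$, after a chain of identities in $\Q[\og_1]$, and verifying it is the whole point of checking that the definition \eqref{a7} is the correct extension of the pairing.

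For comparison, the paper's proof does use $\D$ only in the easy case where one of $f,g$ is cuspidal; in general it applies Stokes to the truncated standard domain $\F_T$ (one cusp), writes $f_A\overline{g_A}-a_A\overline{b_A}=(f_A-a_A)\overline{g_A}+a_A(\overline{g_A}-\overline{b_A})$ so that each piece is integrable, reduces everything to integrals over the arc from $\rho$ to $\rho^2$, and then expresses the result through $H_\rho$, $H_{\rho^2}$ and the relations \eqref{8.12}, using several group-ring identities to isolate the four summands of \eqref{a7}. Your parts (b) and (c) are fine in outline (and (c) is essentially the paper's argument), but (a) as written has a genuine gap: the divergence at the finite cusps and at $t=0$ is not controlled, and the identification of the cross terms and of $I_k$ is asserted, not proved.
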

\begin{proof} a) If one of $f,g$ is a cusp form, then we can apply Stokes' theorem over the
fundamental domain $\D$ for $\Gamma(2)$ as in the  proof of Theorem \ref{thm_hab}, and
easily obtain the desired identity. When both $f,g$ have nonzero constant terms, this approach
is complicated by the fact that both $f$ and $g$ blow up at the cusps $-1,0,1$, and we prefer
to apply Stokes' theorem to the domain $\F$ as in \cite{KZ}. We use the following
abbreviations: $f_A=f|A$, $g_A=g|A$, $a_A=a_0(f|A)$, $b_A=a_0(g|A)$, $C_\Gamma=[\og_1:\og]$,
$C_k=-(2i)^{k-1}$. Sums over $A$ are over systems of representatives
$A\in\og\backslash
\og_1$. For all $T>1$ we have 
\[
C_k C_\Gamma (f,g)= \sum_A \int_{\F} [f_A(z) \ov{g_A}(z)-a_A\ov{b_A}](z-\ov{z})^w dz
d\ov{z}+a_A \ov{b_A}\Big[\int_{\F_T} (z-\ov{z})^w dz d\ov{z}-C_k\frac{T^{k-1}}{k-1} \Big]    
\]
By Stokes' theorem we find $\int_{\F_T} (z-\ov{z})^w dz d\ov{z}=C_k\frac{T^{w+1}}{w+1}+
\frac{1}{w+1}\int_{\rho^2}^\rho (z-\ov z)^{w+1} d\ov{z}$. In the first integral we apply
Stokes'  theorem after writing $f_A  \ov{g_A}-a_A\ov{b_A}= (f_A- a_A)\ov{g_A}+a_A
(\ov{g_A}-\ov{b_A})$ to get
\[
C_k C_\Gamma (f,g)=\sum_{A} \int_{\partial \F} -F_A(z) \ov{g_A}(z) +
a_A[\ov{g_A}(z)-\ov{b_A}] \frac{(z-\ov{z})^{w+1}}{w+1} d\ov{z}+
\frac{a_A\ov{b_A}}{w+1}\int_{\rho^2}^\rho (z-\ov{z})^{w+1} d\ov{z}
\]
where $F_A(z)=\int_{z}^{i\infty} [f_A(t)-a_A] (t-\ov{z})^w dt$. Since $F_{AT}(z)=F_A(Tz)$, the
integrals over the vertical sides of $\F$ cancel (after summing over $A$) and setting
$\tF_A(z)=F_A(z)-a_A\int_0^z(t-\ov{z})^w dt$ we obtain:
\[
C_k C_\Gamma (f,g)=\sum_{A} \int_\rho^{\rho^2}  \tF_A(z) \ov{g_A}(z)d\ov{z}  
+(-1)^w\frac{a_A}{w+1} \int_{\rho}^{\rho^2} \ov{g_A}(z) \ov{z}^{w+1} d\ov{z}.
\]
In the first integral we change variables $z\rightarrow Sz$, which reverses the order of
integration. As in the proof of Proposition \ref{pa1} we have
$\tF_A(z)-\tF_{AS^{-1}}|_{-w} S (z)=\rho_f(A)(\ov{z})$ obtaining
\begin{equation}\label{7.11}
C_k C_\Gamma (f,g)= \sum_{A}\frac{1}{2} \int_\rho^{\rho^2}  \rho_{f}(A)(\ov{z})
\ov{g_A}(z)d\ov{z}
+(-1)^w\frac{a_A}{w+1}\int_{\rho}^{\rho^2} \ov{g_A}(z) \ov{z}^{w+1} d\ov{z}.
\end{equation}
We now proceed to write the result in terms of the pairing $\lla \cdot ,\cdot \rra$ on
$V_w^\Gamma$. Define $H_{z_0}\in V_w^\Gamma$ as in \eqref{a10}, with $g$ in
place of $f$. Using $\int_{\rho}^{\rho^2} \ov{g_A}(z) (\ov{z}-X)^w
d\ov{z}= \ov{H}_{\rho}(A)-\ov{H}_{\rho^2}(A)$ and \eqref{3.3} we get
$$
\int_{\rho}^{\rho^2} \rho_{f}(A)(\ov{z})
\ov{g_A}(z)d\ov{z}=\Big\langle\rho_{f}(A), \int_{\rho}^{\rho^2} \ov{g_A}(z) (\ov{z}-X)^w
d\ov{z}\Big\rangle=\la \rho_{f}(A), \ov{H}_{\rho}(A)-\ov{H}_{\rho^2}(A) \ra.
$$
The second integral in \eqref{7.11} can be written
\[
\int_{\rho}^{\rho^2} \ov{g_A}(z) \ov{z}^{w+1} d\ov{z}= 
\ov{b_A}\int_{\rho}^{\rho^2}\ov{z}^{w+1} d\ov{z}+\int_{\rho}^{i \infty} -\int_{\rho^2}^{i
\infty}  (\ov{g_A}(z)-\ov{b_A})\ov{z}^{w+1}d\ov{z}
\]
and changing variables $z=t-1$ in the last integral, recalling that
$\rho_f^0(A)=(-1)^w a_A\frac{X^{w+1}}{w+1}$ (=$\rho_f^0(AT)$), and using
\eqref{3.3} we obtain
\begin{gather*}
\sum_A (-1)^w\frac{a_A}{w+1}\int_{\rho}^{\rho^2} \ov{g_A}(z) \ov{z}^{w+1} d\ov{z}= \sum_A
\la \rho_f^0(A)|1-T^{-1} , \ov{H}_\rho(A) \ra +\\ 
\sum_A a_A\ov{b_A} \Big[\int_{0}^1 \int_{0}^\rho
(t-\ov{z})^w d\ov{z} dt +(-1)^w\int_{\rho}^{\rho^2}\frac{\ov{z}^{w+1}}{w+1} d\ov{z} \Big].
\end{gather*}
The expression inside square brackets equals $\frac{1}{(k-1)k}$, and setting $I(f,g)=
\frac{2}{k(k-1)C_\Gamma}\sum_A a_A\ov{b_A}$ we get
\[
2 C_k(f,g)=\lla \rho_f, \ov{H}_\rho-\ov{H}_{\rho^2} \rra + 2\lla \rho_f^0|1-T^{-1},
\ov{H}_\rho\rra+I(f,g).
\]
Now we use 
\begin{equation}\label{8.12}
H_{\rho^2}=H_{\rho}|T-\rho_g^0|(1-T), \quad 
\rho_g=H_\rho-H_{\rho^2}|S=H_\rho|(1-TS) +\rho_g^0|(1-T)S .\end{equation} 
Taking into account the relation
$(1+U^2)=\frac{1}{3}(U^2-U)(1-U^{-1})+\frac{2}{3}(1+U+U^2)$ in $\Q[\og_1]$ we have 
\begin{equation*}
\begin{aligned}
\lla \rho_f, \ov{H}_\rho|1-T \rra&=\lla \rho_f|1-T^{-1}, \ov{H}_\rho \rra = \lla \rho_f|1+S
T^{-1}, \ov{H}_\rho \rra =  \\
&=\frac{1}{3}\lla \rho_f|U^2-U, \ov{H}_\rho|1-U \rra+\frac{2}{3} \lla \rho_f|1+U+U^2, 
\ov{H}_\rho\rra\\
&=\frac{1}{3}\lla \rho_f|\,-T^{-1}\,-TS, \ov{\rho_g}-\ov{\rho_g^0}|(1-T)S \rra+\frac{2}{3} \lla
\rho_f|1+U+U^2, \ov{H}_\rho\rra \\
&=\frac{1}{3}\lla \rho_f |T-T^{-1}, \ov{\rho_g}  \rra +\frac{1}{3}\lla \rho_f|T+T^{-1}S,
\ov{\rho_g^0}|1-T \rra-\\ 
&\ \quad\ \quad\ \quad-\frac{2}{3} \lla
\rho_f^0|(1-T^{-1})(1+U+U^2), \ov{H}_\rho\rra
\end{aligned}
\end{equation*}
(on the last line we used Proposition \ref{pa1}) and collecting terms we get
\begin{equation*}
\begin{split}
6 C_k(f,g)=\lla \rho_f |T-T^{-1}, \ov{\rho_g}  \rra+\lla\rho_f|3+T^{-1}S +T, \ov
\rho_g^0|1-T\rra+\\
+2\lla \rho_f^0|(1-T^{-1})(2-U-U^2), \ov{H}_\rho \rra+3I(f,g)
\end{split}
\end{equation*}
In the second term we use the relation
$$(1-T)(3+ST+T^{-1})=2(T^{-1}-T)+(1-T)S[1+U+U^2-U^2(1+S)]S,$$
while in the third we use $2-U-U^2=(1-U)(1-U^2)$ and  \eqref{8.12}:
\begin{equation}\label{8.13}
\begin{split}
6 C_k(f,g)=\lla \rho_f|T-T^{-1}, \ov{\rho_g} \rra +\lla  \rho_f,
2\ov{\rho_g^0}|(T^{-1}-T)  \rra+\lla 2\rho_f^0|(T-T^{-1}), \ov{\rho_g}\rra+\\
+\lla \rho_f^0|(1-T)(T^{-1}S-ST-3), \ov{\rho_g^0}|1-T \rra+3I(f,g)
\end{split}
\end{equation}
Let $p(X)=\frac{X^{w+1}}{w+1}|1-T$, $q(X)=\frac{X^{w+1}}{w+1}|1-T^{-1}$. Then 
 \[
\lla \rho_f^0|(1-T)(T^{-1}S-ST), \ov{\rho_g^0}|1-T \rra=\frac{1}{C_\Gamma}\sum_A (a_A
\ov{b_{AS^{-1}}}-(-1)^w a_{AS^{-1}}\ov{b_A}) \la p, q|S\ra=0,
\]
where we changed $A$ to $AS^{-1}$ in one of the sums and used that $a_{AJ}=(-1)^w a_A$. We
also have
\[\lla \rho_f^0|(1-T),\ov{\rho_g^0}|1-T \rra=\frac{1}{C_\Gamma}\sum_{A} a_A\ov{b_A}\la
p,p \ra=\frac{1+(-1)^w}{k(k-1)C_\Gamma}\sum_{A} a_A\ov{b_A},
\]
which vanishes if $w$ is odd, and equals $I(f,g)$ if $w$ is even. Therefore the second line in
\eqref{8.13} vanishes if $k$ is even, and it equals $3 I(f,g)=I_k(\rho_f^0, \ov{\rho}_g^0)$ if
$k$ is
odd, finishing the proof.

b) Going backwards in the proof of part a) up to the first equation after applying Stokes'
theorem, we obtain
\[
 C_\Gamma\{\wr_f,\wr_g\}=-6 \sum_{A} \int_{\partial \F}  \tf(A)(z) g_A(z)d z 
\]
with $\tf$ defined in \eqref{a1}. Since the integrand is holomorphic and vanishes at
$i\infty$, each term vanishes. 

c) Since the extended pairing $\{\cdot, \cdot\}$ behaves as the original one under the action
of $\epsilon$, the claim follows
from a) and b) as in the proof of Theorem \ref{thm_main}.
\end{proof}

\subsection{Hecke operators.} \label{sec7.1}
For a finite index subgroup $\Gamma$ and a double coset $\Sigma_n$ satisfying \eqref{eq_star},
we define the operation $|_{\Sigma}$ of the Hecke operators $\wT_n$ on 
$\wW_w^\Gamma$ as in Section \ref{sec4}. Although matrices in the
definition of $\wT_n$ do not preserve $\wV_w^\Gamma$, we have the
following generalization of Proposition \ref{p4.2} and of Corollary \ref{c4.3}.
\begin{prop} \label{pa5} Assume the pair $(\Gamma, \Sigma_n)$ satisfy \eqref{eq_star},
and for part (b) assume that $\Sigma_n=\Sigma_n'$ and $\Gamma$ is normalized by $\epsilon$. Let $\wT_n\in R_n$ be
any element satisying \eqref{hecke}. 

$\mathrm{(a)}$ We have $\wr_{f|[\Sigma_n]}=\wr_f |_{\Sigma} \wT_n$ for $f\in M_k(\Gamma)$.  

$\mathrm{(b)}$  We have ${\wr_{f|[\Sigma_n]}}^\pm={\wr_f}^\pm |_{\Sigma} \wT_n$ for $f\in M_k(\Gamma)$.

$\mathrm{(c)}$ The operators $\wT_n$ preserve the space $\wW_w^\Gamma$. 
\end{prop}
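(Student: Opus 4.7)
The plan is to adapt the proofs of Proposition \ref{p4.2}, Corollary \ref{c4.3}, and Lemma \ref{l4.40} to the extended setting, exploiting the fact that the modified Eichler integral \eqref{a1} still satisfies $\tf|(1-T) = 0$, since $a_0(f|AT) = a_0(f|A)$ for all $A$.

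For part (a), one sets $\wr_f = \tf|(1-S)$ and uses the $\Gamma_1$-compatibilities in \eqref{Ecom}, which extend verbatim from $V_w^\Gamma$ to $\A$-valued maps, to compute
\[
\wr_f|_{\Sigma}\wT_n = \tf|_{\Sigma}(1-S)\wT_n = \tf|_{\Sigma}T_n^\infty(1-S) - \tf|(1-T)|_{\Sigma}Y_n = (\tf|_{\Sigma}T_n^\infty)|(1-S),
\]
where the second equality uses \eqref{hecke} and the third uses $\tf|(1-T)=0$. The identity $\tf|_{\Sigma}T_n^\infty = \widetilde{f|[\Sigma_n]}$ is then verified by the same change of variables $t=Mu$ as in \eqref{4.3}; the new feature is that the subtracted constant terms must transform consistently, which they do, since for $M = \left(\begin{smallmatrix} a_M & b_M \\ 0 & d_M \end{smallmatrix}\right)\in M_n^\infty$ the transformation law $a_0(f|A_M|_k M) = a_0(f|A_M)/d_M^k$ exactly matches the factor $n^{w+1}/d_M^k$ produced by the Jacobian on the constant-term piece.

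Part (b) is deduced from (a) just as Corollary \ref{c4.3} is deduced from Proposition \ref{p4.2}: conjugating \eqref{hecke} by $\epsilon$ and noting $T_n^\infty - \epsilon T_n^\infty \epsilon \in (1-T)R_n$ shows that $\epsilon \wT_n \epsilon$ also satisfies \eqref{hecke} (with a modified $Y_n$), and the last compatibility in \eqref{Ecom} combined with $\Sigma_n = \Sigma_n'$ yields the $\pm$-eigenspace identity. For part (c), the starting point is the decomposition $\wW_w^\Gamma = W_w^\Gamma + \wr(M_k(\Gamma))$, obtained from \eqref{7.6}, the definition $\wE_w^\Gamma = \wr(\EE_k(\Gamma))$, and the identity $W_w^\Gamma = \rho(S_k(\Gamma)) + C_w^\Gamma$ implicit in Theorem \ref{thm2.1}. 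On the summand $\wr(M_k(\Gamma))$ part (a) gives $\wr_f|_{\Sigma}\wT_n = \wr_{f|[\Sigma_n]} \in \wr(M_k(\Gamma))$, while on $C_w^\Gamma$ Lemma \ref{l4.40} applies directly, its proof depending only on the formal relation \eqref{4.8} and the $\Gamma_1$-compatibility.

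The main technical obstacle is in part (a): since matrices $M \in M_n$ do not preserve $\wV_w$ (they map $X^{w+1}$ to rational functions), the formal expression $\tf|_{\Sigma}T_n^\infty$ is not a priori polynomial-valued, and one must genuinely check that after the change of variables the constant-term contributions assemble into the Eichler integral of $f|[\Sigma_n]$. Once this has been done, polynomiality of the output propagates to all of $\wr(M_k(\Gamma))|_{\Sigma}\wT_n$, and the preservation of $\wW_w^\Gamma$ in (c) reduces to the preservation of $C_w^\Gamma$ established in Lemma \ref{l4.40}.
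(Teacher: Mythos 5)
Parts (a) and (b) of your argument are correct and essentially identical to the paper's: the reduction to $\tf|_{\Sigma}T_n^{\infty}=\widetilde{f|[\Sigma_n]}$ via \eqref{hecke} and $\tf|(1-T)=0$, and the change of variables $t=Mu$, are exactly the paper's steps. Your direct verification that the subtracted constants assemble correctly (via $a_0(f|A_M|_k M)=a_0(f|A_M)/d_M^k$ for upper triangular $M$, matching the Jacobian factor) is a legitimate alternative to the paper's shortcut, which instead observes that since the integral defining $\tf|_{\Sigma}T_n^{\infty}(A)$ converges, the subtracted constant is forced to equal $a_0(f|[\Sigma_n]|A)$; the paper itself notes the direct computation is also possible.

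Part (c), however, contains a genuine gap. The decomposition $W_w^\Gamma=\rho(S_k(\Gamma))+C_w^\Gamma$ that you extract from Theorem \ref{thm2.1} is false by dimension count: $\dim W_w^\Gamma=2\dim S_k(\Gamma)+\dim C_w^\Gamma$, whereas your right-hand side has dimension at most $\dim S_k(\Gamma)+\dim C_w^\Gamma$. Consequently $\wr(M_k(\Gamma))+C_w^\Gamma$ has dimension at most $\dim M_k(\Gamma)+\dim \EE_k(\Gamma)<2\dim M_k(\Gamma)=\dim\wW_w^\Gamma$ whenever $S_k(\Gamma)\neq 0$, so you have only shown that a proper subspace of $\wW_w^\Gamma$ is carried into $\wW_w^\Gamma$. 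The missing summand is the antiholomorphic copy of $S_k(\Gamma)$: the Eichler--Shimura isomorphism gives $W_w^\Gamma=\rho(S_k(\Gamma))+\ov{\rho(S_k(\Gamma))}+C_w^\Gamma$ (equivalently $\rho^+(S_k)+\rho^-(S_k)+C_w^\Gamma$ when $\Gamma$ is normalized by $\epsilon$ --- which part (c) does not assume). The repair is immediate: all matrices occurring in $\wT_n$ are real, so $\ov{P|_{\Sigma}M}=\ov{P}|_{\Sigma}M$, and hence $\ov{\rho(S_k(\Gamma))}$ is preserved because $\rho(S_k(\Gamma))$ is, by Proposition \ref{p4.2}. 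With this extra summand your argument for (c) goes through. (A small misattribution: the proof of Lemma \ref{l4.40} rests on \eqref{hecke} together with $T_n^{\infty}(1-T)\in(1-T)R_n$, not on the relation \eqref{4.8}, which belongs to the proof of Lemma \ref{l4.4}.)
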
  
\begin{proof} (a) The proof is the same as of Prop. \ref{p4.2}, once we show that
$\tf|_{\Sigma}T_n^{\infty}=\widetilde{f| [\Sigma_n]}$. Equation \eqref{4.3} becomes
\begin{equation*}\label{a5}
\begin{split}
\tf|_{\Sigma}T_n^{\infty}(A)&=\sum_{M\in M_n^\infty\cap \Gamma_1 \Sigma_n A }\int_{M z}^{i \infty} 
[f|A_M (t)-a_0(f|A_M)](t-Mz)^w j(M,z)^w dt\\
&=n^{w+1} \sum_{M\in M_n^\infty \cap \Gamma_1 \Sigma_n A}\int_{z}^{i \infty}\big[f|M_{A} A
(u)-a_0(f|A_M)j(M,u)^{-k}\big](u-z)^w du
\end{split}
\end{equation*} 
As in Proposition \ref{p4.2}, we obtain $\tf|_{\Sigma}T_n^{\infty}(A)=\int_{z}^{i
\infty}\big[(f|[\Sigma_n])|A-c(n,f,A) \big](u-z)^w du$ where $c(n,f,A)$ is the sum of the terms
involving $a_0(f|A_M)$ (which is independent of $u$ since $j(M,u)=d_M$ for $M\in M_n^\infty$). Since
the integral converges, we must have $c(n,f,A)=a_0(f|[\Sigma_n]|A)$ (which can alsp be proved directly, using
Prop. \ref{pa1}), hence the last expression equals $\widetilde{f|[\Sigma_n]} (A)$.

(b) The proof is the same as of Corollary \ref{c4.3}.

(c) This follows from part (a) and the decomposition \eqref{7.6}
\end{proof}

\begin{prop}\label{pa6} Assume the hypotheses of Prop. \ref{pa5} and furthermore that $\Sigma_n^\vee$ 
satisfies \eqref{eq_star}. We have for all $\wP,\wQ\in\wW_w^\Gamma$
\[
\{\wP|_{\Sigma}\wT_n, \wQ\} = \{\wP, \wQ|_{\Sigma^\vee}\wT_n \}.
\]
\end{prop}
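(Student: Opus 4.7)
The plan is to reduce the identity to the Hecke-equivariance on $W_w^\Gamma$ (Theorem \ref{thm_equiv}) and the vanishing statement of Theorem \ref{pa4}(b), via the Hecke-stable decomposition \eqref{7.6}: $\wW_w^\Gamma = W_w^\Gamma \oplus \wE_w^\Gamma$. Stability of $W_w^\Gamma$ under $\wT_n$ follows from Lemma \ref{l4.40} (on $C_w^\Gamma$) and Corollary \ref{c4.3} (on $\rho^\pm(S_k(\Gamma))$), while stability of $\wE_w^\Gamma$ follows from Proposition \ref{pa5}(a) combined with the Hecke-stability of $\EE_k(\Gamma)$.

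Writing $\wP = P_W + \wr_{f_E}$ and $\wQ = Q_W + \wr_{g_E}$ with $P_W, Q_W \in W_w^\Gamma$ and $f_E, g_E \in \EE_k(\Gamma)$, the bilinear identity reduces to four cases. The $(W_w^\Gamma, W_w^\Gamma)$ case is Theorem \ref{thm_equiv}. The $(\wE_w^\Gamma, \wE_w^\Gamma)$ case has both sides zero: Proposition \ref{pa5}(a) gives $\wr_{f_E}|_\Sigma\wT_n = \wr_{f_E|[\Sigma_n]}$, to which Theorem \ref{pa4}(b) applies. For the mixed cases, the block form of the matrix of $\{\cdot,\cdot\}$ exhibited in the proof of Proposition \ref{l7.3}(b) shows that $\{\rho^\pm(S_k(\Gamma)), \wE_w^\Gamma\} = 0$; since Hecke operators preserve $\rho^\pm(S_k(\Gamma))$, we may further assume $P_W \in C_w^\Gamma$ (respectively $Q_W \in C_w^\Gamma$), reducing to the pairing of coboundary polynomials against the Eisenstein part.

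By Proposition \ref{l7.3}(a), both sides of the desired identity in this last case become explicit bilinear expressions in the constants $(c_A)$ parameterizing elements of $C_w^\Gamma$ via \eqref{cw} and the cusp constants $a_0(g_E|A)$ of the Eisenstein series. The equality thus reduces to showing that $\wT_n$ acts on the constants of $C_w^\Gamma$ as the transpose of the action of $[\Sigma_n^\vee]$ on the cusp constants of modular forms, which is the main obstacle. This adjoint relation can be verified directly from \eqref{hecke} and Remark \ref{r5.1} by tracking how each $M \in M_n$ in the support of $\wT_n$ acts on cusps, or deduced more abstractly by applying Theorem \ref{thm_hecke} to the degenerate pairing induced on $C_w^\Gamma \times \wE_w^\Gamma$ by Proposition \ref{l7.3}(a).
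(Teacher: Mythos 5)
Your reduction is essentially the paper's: decompose $\wW_w^\Gamma=W_w^\Gamma\oplus\wE_w^\Gamma$, refine further to $C_w^\Gamma\oplus\rho^+(S_k(\Gamma))\oplus\rho^-(S_k(\Gamma))\oplus\wE_w^\Gamma$, dispose of the $(W_w^\Gamma,W_w^\Gamma)$ block by Theorem \ref{thm_equiv}, of the $(\wE_w^\Gamma,\wE_w^\Gamma)$ block by Proposition \ref{pa5}(a) and Theorem \ref{pa4}(b), and of the $(\rho^\pm(S_k),\wE_w^\Gamma)$ blocks by their vanishing, leaving exactly the pairing of $C_w^\Gamma$ against $\wE_w^\Gamma$ — this is equation \eqref{8.14} in the paper, and you have correctly identified it as the crux. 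The problem is that you then stop: you call the adjointness of the $\wT_n$-action on the constants $(c_A)$ of $C_w^\Gamma$ against the $[\Sigma_n^\vee]$-action on the constant terms $a_0(e|A)$ ``the main obstacle'' and offer two unexecuted routes. That obstacle is not a routine verification — it is the actual content of the paper's proof, occupying its entire second half. One must write $\{R|_\Sigma\wT_n,\wr_e\}$ via Proposition \ref{l7.3}(a) and \eqref{hecke} as $\sum_{M\in M_n^\infty}\sum_A c(A_M)\,a_0(e|A)\,d_M^w$, compute $a_0(e|[\Sigma_n^\vee]|B)=\sum_M a_0(e|B_M)\,n^{w+1}/d_M^k$ from the proof of Proposition \ref{pa5}, and then match the two double sums by exploiting the invariance of the inner sums under $M\mapsto T^iMT^j$ (so that for $M=\left(\begin{smallmatrix}a&b\\0&d\end{smallmatrix}\right)$ the dependence on $b$ is only modulo $\gcd(a,d)$) together with the duality $M\leftrightarrow M^\vee$ interchanging the roles of $a$ and $d$. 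None of this bookkeeping is present in your proposal.

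Your second suggested shortcut — deducing the adjoint relation ``abstractly'' from Theorem \ref{thm_hecke} — does not go through as stated. Theorem \ref{thm_hecke} is an identity in $R_n$ designed to be applied inside $\lla P|_\Sigma(\,\cdot\,),Q\rra$ when the pairing has the shape $\lla P|(T-T^{-1}),Q\rra$ and $P,Q$ are killed by $\cI$ and $\cI^\vee$. The extended pairing \eqref{a7} on $\wW_w^\Gamma$ is not of this shape: it carries the correction terms $\lla 2P^0|(T-T^{-1}),Q\rra$, $\lla P,2Q^0|(T^{-1}-T)\rra$ and $I_k(P^0,Q^0)$, and the $M_n$-action does not even preserve $\wV_w$, so the formal argument in the second proof of Theorem \ref{thm_equiv} cannot be transplanted without reworking all of these terms. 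So the first (computational) route is the one that must be carried out, and as it stands your proof is incomplete precisely there.
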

\begin{proof} 
As in the first proof of Theorem \ref{thm_equiv}, we decompose
$\wP=R+\rho_f^+ +\rho_g^-+\wr_e$ with $R\in C_w^\Gamma$, $f,g\in S_k(\Gamma)$, $e\in M_k(\Gamma)$. Taking into
account Theorem \ref{pa4}, Proposition \ref{pa5} and the fact that the adjoint of the operator $[\Sigma_n]$ is
$[\Sigma_n^\vee]$ with respect to the extended Petersson inner product on $M_k(\Gamma)$ \cite{PP12}, it remains to
show that 
\begin{equation}\label{8.14}
 \{R|_{\Sigma}\wT_n, \wr_e\} = \{R, \wr_{e|[\Sigma_n^\vee]} \}.
\end{equation}

We use Prop. \ref{l7.3}. Let $R=R'|(1-S)$, with $R'(A)=c(A)\in \C$ and $R'|(1-T)=0$. By \eqref{hecke} 
$R|_{\Sigma}\wT_n= R'|_{\Sigma} T_n^\infty (1-S)$, and for $A\in \Gamma_1$ we have
$R'|_{\Sigma} T_n^\infty(A)= \sum_{M\in M_n^\infty} c(A_M)d_M^w $  
where $d_M$ is the lower right entry of $M$, and $MA^{-1}=A_M^{-1} M_A$ with $A_M\in \Gamma_1$, $M_A\in \Sigma_n$.
The left side of \eqref{8.14} becomes (up to a constant which we ignore in the right side as well)
\[
\text{LHS}= \sum_{M\in M_n^\infty} \sum_{\substack{A\in \Gamma\backslash \Gamma_1\\
MA^{-1}\in A_M^{-1}\Sigma_n }}c(A_M) a_0(e|A)d_M^w.
\]
Since $c(AT)=c(A)$, $a_0(e|AT)=a_0(e|A)$, we can replace $M$ by $T^i M T^j$ in the interior sum without changing
it. Therefore, if we write $M=M_{a,d,b}=\left(\begin{smallmatrix}
a & b \\ 0 & d \end{smallmatrix}\right)$ and fix $d$, the interior sum depends only on $b$ modulo
$g_{a,d}=\gcd(a,d)$ and we have
\[\text{LHS}= \sum_{\substack{d|n \\ b \text{ mod } {g_{a,d}}}} \sum_{\substack{A\in \Gamma\backslash \Gamma_1\\
M_{a,d,b}A^{-1}\in A_{M}^{-1}\Sigma_n }}c(A_M) a_0(e|A)\frac{d^{w+1}}{g_{a,d}}.
\]

For the right side of \eqref{8.14}, from the proof of Prop. \ref{pa5} we have for $B\in \Gamma_1$
$$a_0(e|[\Sigma_n^\vee]|B)=\sum_{\substack{M\in M_n^\infty\\ MB^{-1}\in B_M^{-1} \Sigma_n^\vee }}
a_0(e|B_M)\frac{n^{w+1}}{d_M^k} .$$
Since  $MB^{-1}\in B_M^{-1} \Sigma_n^\vee\iff M^\vee B_M^{-1} \in B^{-1} \Sigma_n$, the right side of \eqref{8.14}
becomes, after interchanging $B_M$ and $B$ 
\begin{equation*}
\begin{split}
\text{RHS} &=\sum_{M\in M_n^\infty}\sum_{\substack{B\in \Gamma\backslash \Gamma_1\\
M^\vee B^{-1}\in B_M^{-1}\Sigma_n }}c(B_M) a_0(e|B) \frac{n^{w+1}}{d_M^k} \\
&=\sum_{\substack{d|n \\ b \text{ mod } {g_{a,d}}}}\sum_{\substack{B\in \Gamma\backslash \Gamma_1\\
M_{a,d,b}^\vee B^{-1}\in B_{M}^{-1}\Sigma_n }} c(B_M) a_0(e|B) \frac{a^{w+1}}{d} \frac{d}{g_{a,d}}
\end{split}
\end{equation*}
where the second line follows by writing $M=M_{a,d,b}$ as before.  Comparing the expressions obtained for RHS and
LHS finishes the proof of \eqref{8.14}. 
\end{proof}
From the duality in Proposition \ref{l7.3} and from Propositions \ref{pa5}, \ref{pa6}  we immediately obtain:
\begin{corollary}\label{c8.9}
Assume that both $\Sigma_n$ and 
$\Sigma_n^\vee$ satisfy property \eqref{eq_star}. There exist bases of $\mathcal{E}_k(\Gamma)$ and $C_w^\Gamma$
such that the matrix of the operator $[\Sigma_n^\vee]$ acting on $\mathcal{E}_k(\Gamma)$ is the same as the matrix
of the operator $|_\Sigma \tilde{T}_n$ acting on $C_w^\Gamma$.
\end{corollary}

As an application, we let $\Gamma=\Gamma_1(N)$, and $\chi$ a character modulo $N$, and we show 
that the trace of Hecke operators $T_n$ on the Eisenstein subspace
$\EE_k(N, \chi)\subset M_k(N,\chi)$ is the same as the trace of $\wT_n$ on $C_w^{\Gamma,\chi}$ (see
$\S$\ref{sec5.2} for the notation). For $\Gamma_1$, when $C_w^{\Gamma_1}=<X^w-1>$, a direct proof is immediate, but
for $\Gamma=\Gamma_1(N)$ it seems difficult to prove the statement without using the dual space $\wE_w^\Gamma$ and
the pairing $\{\cdot, \cdot\}$.
\begin{prop}\label{p8.9} $\mathrm{(a)}$ Let $\Gamma=\Gamma_1(N)$ and 
let $\wE_w^{\Gamma,\chi}\subset \wW_w^\Gamma$ be the image of the Eisenstein subspace
$\EE_k(N,\chi)\subset M_k(\Gamma)$ under the map $f\rightarrow \wr_f$. For $(n,N)=1$ we have 
\[ 
\tr(\EE_k(N, \chi)|T_n)=\tr(\wE_w^{\Gamma,\chi}|_{\Delta} \wT_n)=\tr(C_w^{\Gamma,\chi}|_{\Delta} \wT_n).
\] 

$\mathrm{(b)}$ For $\Gamma=\Gamma_0(N)$ and $n\|N$, let $\Theta_n$  be the double coset and let $W_n$ be
the Atkin-Lehner operator defined in Section \ref{sec5.11}. We have
\[ 
\tr(\EE_k(\Gamma)|W_n)=\tr(\wE_w^{\Gamma}|_{\Theta} \wT_n)=\tr(C_w^{\Gamma}|_{\Theta} \wT_n).
\]
\end{prop}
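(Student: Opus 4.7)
The plan is to derive both equalities in each part from the Hecke-equivariance of the Eichler integral map $\wr:M_k(\Gamma)\rightarrow\wW_w^\Gamma$ (Proposition \ref{pa5}) and the adjoint property of the extended pairing $\{\cdot,\cdot\}$ (Proposition \ref{pa6}). For the first equality $\tr(\EE_k(N,\chi)|T_n)=\tr(\wE_w^{\Gamma,\chi}|_\Delta\wT_n)$, I would verify that $\wr$ restricts to a Hecke-equivariant isomorphism $\EE_k(N,\chi)\rightarrow\wE_w^{\Gamma,\chi}$. Surjectivity is the definition, Hecke-equivariance is Proposition \ref{pa5}(a), and injectivity follows from Proposition \ref{pa1}: if $\wr_f=\rho_f+\rho_f^0|(1-S)=0$ then uniqueness of the decomposition \eqref{a3} forces both $\rho_f=0$ and $\rho_f^0|(1-S)=0$, the latter yielding $a_0(f|A)=0$ for all $A$; hence $f$ is cuspidal, and $\rho_f=0$ then gives $f=0$ via the injectivity of $\rho$ on cusp forms (a consequence of Theorem \ref{thm_main}).

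For the second equality $\tr(\wE_w^{\Gamma,\chi}|_\Delta\wT_n)=\tr(C_w^{\Gamma,\chi}|_\Delta\wT_n)$, observe that the Nebentypus decomposition $\wW_w^\Gamma=\bigoplus_\psi\wW_w^{\Gamma,\psi}$ is orthogonal for $\{\cdot,\cdot\}$ in the sense that the pairing on $\wW_w^{\Gamma,\chi}\times\wW_w^{\Gamma,\psi}$ vanishes unless $\psi=\ov{\chi}$; by Proposition \ref{l7.3}(b) it restricts to a nondegenerate pairing between $C_w^{\Gamma,\chi}$ and $\wE_w^{\Gamma,\ov{\chi}}$. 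Proposition \ref{pa6} then identifies the bilinear adjoint of $|_\Delta\wT_n$ on $C_w^{\Gamma,\chi}$ as $|_{\Delta^\vee}\wT_n$ on $\wE_w^{\Gamma,\ov{\chi}}$, and since adjoints under a nondegenerate pairing have the same trace, $\tr(C_w^{\Gamma,\chi}|_\Delta\wT_n)=\tr(\wE_w^{\Gamma,\ov{\chi}}|_{\Delta^\vee}\wT_n)=\tr(\EE_k(N,\ov{\chi})|T_n^*)$, the last equality by Hecke-equivariance of $\wr$ applied to $\Sigma=\Delta^\vee$ (whose modular form operator is $[\Delta_n^\vee]=T_n^*$). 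The remaining identity $\tr(\EE_k(N,\ov{\chi})|T_n^*)=\tr(\EE_k(N,\chi)|T_n)$, via the algebraic relation $T_n^*=\chi(n)T_n$ on $M_k(N,\ov{\chi})$, reduces to a symmetry of the Eisenstein Hecke eigenvalues $\chi_1(n)+\chi_2(n)n^{k-1}$ (for the basis $E_{k,\chi_1,\chi_2}$ parameterized by $\chi_1\chi_2=\chi$) under the swap $(\chi_1,\chi_2)\leftrightarrow(\chi_2,\chi_1)$ of factorizations.

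Part (b) is analogous but simpler: the pairing is nondegenerate on the full $\wW_w^\Gamma$ without Nebentypus restrictions, and since $[\Theta_n^\vee]=[\Theta_n]=W_n$ on $\Gamma_0(N)$, the duality argument gives directly $\tr(C_w^\Gamma|_\Theta\wT_n)=\tr(\wE_w^\Gamma|_{\Theta^\vee}\wT_n)=\tr(\EE_k(\Gamma)|W_n)$ with no symmetry step needed. The main obstacle is the character symmetry in part (a); a conceptually slicker alternative to the direct Eisenstein eigenvalue computation is to start from the global identity $\tr(\wW_w^{\Gamma,\chi}|_\Delta\wT_n)=\tr(\wW_w^{\Gamma,\ov{\chi}}|_{\Delta^\vee}\wT_n)$ coming from duality on the full extended space, expand both sides using the direct sum decomposition into $C_w$, $\rho^\pm(S_k)$, and $\wE_w$ components, and then use the standard cusp form identity $\tr(S_k(N,\ov{\chi})|T_n^*)=\tr(S_k(N,\chi)|T_n)$ (from Petersson nondegeneracy on $S_k$ combined with complex conjugation of eigenforms) to isolate the Eisenstein trace identity.
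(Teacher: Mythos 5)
Your argument is correct and is essentially the paper's own proof: the paper likewise derives both equalities from the duality of Proposition \ref{l7.3} between $C_w^{\Gamma,\chi}$ and $\wE_w^{\Gamma,\ov{\chi}}$, the adjointness of $\wT_n$ with respect to the extended pairing (Proposition \ref{pa6}, packaged as Corollary \ref{c8.9}), and the Hecke equivariance of $f\mapsto\wr_f$ from Proposition \ref{pa5}. The only difference is that you spell out the final symmetry $\tr(\EE_k(N,\ov{\chi})|T_n^*)=\tr(\EE_k(N,\chi)|T_n)$ (which requires the conjugate-swap $(\chi_1,\chi_2)\mapsto(\ov{\chi_2},\ov{\chi_1})$ of the Eisenstein parameters, not just the swap) and the injectivity of $\wr$ on the Eisenstein subspace, both of which the paper asserts without detail.
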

\begin{proof} 
(a) The duality in Prop. \ref{l7.3} between $C_w^\Gamma$ and $\wE_w^{\Gamma}$
with respect to the pairing $\{\cdot, \cdot\}$ implies dualities between $C_w^{\Gamma,\chi}$ and $\wE_w^{\Gamma,
\ov{\chi}}$. Therefore, taking $\Sigma_n=\Delta_n$ for $(n,N)=1$ in Corollary \ref{c8.9}, it follows that the
eigenvalues of $|_\Delta \wT_n$ on $C_w^{\Gamma,\chi}$ are the same as the eigenvalues of $T_n^*=[\Delta_n^\vee]$
on $\EE_k(N, \ov{\chi})$, which are the same as the eigenvalues of $T_n$ on $\EE_k(N, \chi)$ (the latter
space has a basis of eigenforms for $T_n$ with $(n,N)=1$). 

(b) The claim follows from Corollary \ref{c8.9}, using the fact that $\Theta_n=\Theta_n^\vee$.
\end{proof}
\begin{remark} \label{r8.10}
Prop \ref{p8.9} shows that for $\Gamma=\Gamma_1(N)$ and $(n,N)=1 $ we have
$$\tr(W_w^{\Gamma,\chi}|_{\Delta}\wT_n)=\tr(M_k(N,\chi)|T_n)+\tr(S_k(N, \chi)|T_n),$$
and the same for Atkin-Lehner operators on $\Gamma_0(N)$.  
For $\Gamma=\Gamma_1$, this fact was an ingredient used by Zagier to sketch an elementary proof of
the Eichler-Selberg trace formula, by computing directly the left side for an
appropriately chosen $\wT_n$ \cite{Za93}. A generalization of this approach giving a simple trace
formula for $M_k(N,\chi)$ is work in progress of the second author and Don Zagier. 
\end{remark}

\subsection{Extra relations revisited}\label{sec7.2}

Theorem \ref{pa4} gives another way of determining the extra relations satisfied by all
period polynomials of cusp forms which are independent of the period relations.   
Assuming that $\wr^-$ is an isomorphism (see Proposition \ref{p7.4}), it follows that there
exist $g\in \EE_k(\Gamma)$ such that  $\wr^-_g$ form a basis for $(\wE_w^\Gamma)^-$. Since the
pairing $\{\cdot ,\cdot \}$ is nondegenerate, it follows that the linear relations
$\{P, \overline{\wr^-_g} \}=0$ are satisfied by $P=\rho_f^{+}$, for all $f\in S_k(\Gamma)$,
but they are not satisfied by some $P\in (C_w^\Gamma)^+$. A similar argument applies to
determine the relations satisfied by $\rho_f^{-}$, when $(C_w^\Gamma)^-\ne 0$ and $\wr^+$ is an
isomorphism. These linear relations can be used to define other versions of the linear forms
$\lambda_+, \lambda_-$ in Proposition
\ref{p6.2}, which are entirely explicit once the period polynomials of Eisenstein series are
determined.

As an example we take $\Gamma=\Gamma_1(N)$, and we assume $k\ge 3$. Then the Eisenstein
subspace $\EE_k(\Gamma)$ has a basis of Eisenstein series which are Hecke eigenforms for the
Hecke operators of index coprime with the level \cite[Ch. 5]{DS}. Their period polynomials for
the identity coset can be determined in terms of special values of Dirichlet $L$-functions  by
Proposition \ref{pa1}. For other cosets $A\in \Gamma\backslash \Gamma_1$ the period
polynomials of the Hecke eigenforms are harder to compute. Instead, consider a second
basis, consisting of Eisenstein series which vanish at all but one cusp, so that the action
$|A$ permutes the elements of this basis. The elements of the second basis can be decomposed in
terms of Hecke eigenforms, so their period polynomials corresponding to all cosets $A$ can be
determined explicitly. For $\Gamma_0(N)$ we are planning
to return to this question in a future work.

\noindent\textbf{\small Acknowledgments.} Part of this work was completed at the
Max Planck Institute in Bonn, which provided financial support and a great working
environment. We would like to thank Don Zagier for inspiring conversations. The
first author was partially supported by the CNCSIS grant PN-II-RU-TE-2012-3-0455. The second
author was partially supported by the European Community grant PIRG05-GA-2009-248569.

\end{document}